\theoremstyle{plain}
\newtheorem{theorem}{Theorem}[section]
\newtheorem{corollary}[theorem]{Corollary}
\newtheorem{lemma}[theorem]{Lemma}
\newtheorem{conjecture}[theorem]{Conjecture}
\newtheorem{proposition}[theorem]{Proposition}
\theoremstyle{definition}
\newtheorem{definition}[theorem]{Definition}
\newtheorem{remark}[theorem]{Remark}
\newtheorem{example}[theorem]{Example}
\newcommand{\vin}{\rotatebox{90}{$\in$}}
\newcommand{\map}[3]{{#1}:{#2}\longrightarrow{#3}}
\newcommand{\C}[1]{[c_0]_{#1}[c_1]_{#1}\cdots [c_{\ell}]_{#1}}
\newcommand{\x}{x}
\renewcommand{\t}{t}
\newcommand{\R}{\mathrm{R}}
\newcommand{\A}{\mathrm{A}}
\newcommand{\Eh}{\mathrm{L}}
\newcommand{\n}{n}
\newcommand{\Sh}{\mathrm{S}}
\newcommand{\Shp}[1]{\mathrm{S}_{#1}}
\newcommand{\degree}{\ell}
\renewcommand{\gcd}{\mathrm{gcd}}
\newcommand{\mult}{\ell}
\newcommand{\geneshift}{\mathrm{L}_{A_{\ell}}}
\newcommand{\Ehp}{\rho}
\newcommand{\avEh}[2]{\tilde{\mathrm{L}}_{#1}^{#2}}
\newcommand{\nn}{n}
\newcommand{\divcoxc}[1]{\tilde{n}_{c_{#1}}}
\newcommand{\Ehf}[2]{\mathrm{L}_{#1}^{#2}}
\newcommand{\Set}[2]{\{{#1}\ |\ {#2}\}}
\newcommand{\difi}{\hat{c}}
\newcommand{\ndifi}{\hat{\ell}}
\newcommand{\adegree}{\ell}
\newcommand{\pf}{p}
\newcommand{\m}{m}
\newcommand{\coef}{a}
\newcommand{\ascl}{a}
\newcommand{\asc}{\mathrm{asc}}
\renewcommand{\map}[3]{{#1}:{#2}\rightarrow{#3}}
\newcommand{\coxc}{c}
\newcommand{\coxn}{h}
\newcommand{\cyc}[2]{[#1]_{#2}}
\newcommand{\p}{n}
\newcommand{\pe}{p}
\newcommand{\sg}{s}
\newcommand{\ig}{i}
\newcommand{\rg}{r}
\newcommand{\dg}{d}
\newcommand{\qg}{q}
\newcommand{\divisors}[1]{X_{#1}}
\newcommand{\invgcd}{\check{d}}
\newcommand{\mug}{\mu}
\newcommand{\rad}{\mathrm{rad}}
\renewcommand{\Re}{\operatorname{Re}}
\renewcommand{\dim}{\operatorname{dim}}
\renewcommand{\ker}{\operatorname{ker}}
\newcommand{\mapel}[5]{
			\begin{array}{ccc}
				{#1}:{#2} & \longrightarrow & {#3}\\
				\vin & & \vin \\[-4pt]
				{#4} & \longmapsto & {#5}\\
			\end{array}}
\begin{document}
\title{Postnikov--Stanley Linial arrangement conjecture}
\author{Shigetaro Tamura \thanks{Department of Mathematics, Faculty of Science,
Hokkaido University, Kita 10, Nishi 8, Kita-ku, Sapporo 060-0810, JAPAN.
E-mail: tamurashigetaro@gmail.com}}
\maketitle
\begin{abstract}
A characteristic polynomial is an important invariant in the field of hyperplane arrangement. For the Linial arrangement of any irreducible root system, Postnikov and Stanley conjectured that all roots of the characteristic polynomial have the same real part. In relation to this conjecture, Yoshinaga obtained an explicit relationship between the characteristic quasi-polynomial and the Ehrhart quasi-polynomial for the fundamental alcove. In this paper, we calculate Yoshinaga's explicit formula through the decomposition of the Ehrhart quasi-polynomial into several quasi-polynomials and a modified shift operator, and obtain new formulas for the characteristic quasi-polynomial of the Linial arrangement. In particular, when the parameter of the Linial arrangement is relatively prime to the period of the Ehrhart quasi-polynomial, we prove the Postnikov--Stanley Linial arrangement conjecture. This generalizes some of the results for the root systems of classical types that have been proved by Postnikov--Stanley and Athanasiadis. For other cases, we verify this conjecture for exceptional root systems using a computational approach.

\medskip
\noindent
{\textbf{Keywords:} Hyperplane arrangement, Linial arrangement, Characteristic quasi-polynomial, Quasi-polynomial, Ehrhart quasi-polynomial, Eulerian polynomial.}
\end{abstract}
\tableofcontents
\section{Introduction}
Let $\mathcal{A}$ be a hyperplane arrangement, that is, a finite collection of affine hyperplanes in a vector space $V$. One of the most important invariants of $\mathcal{A}$ is the characteristic polynomial $\chi(\mathcal{A},t)$. Let $\Phi$ be an irreducible root system with the Coxeter number $\coxn$. Let $a,b\in \mathbb{Z}$ be integers with $a\leqq b$. Let us denote by $\mathcal{A}_{\Phi}^{[a,b]}$ the truncated affine Weyl arrangement. In particular, $\mathcal{A}_{\Phi}^{[1,\nn]}$ is called the Linial arrangement. Postnikov and Stanley \cite{Postnikov-Stanley} conjectured that
every root $z \in \mathbb{C}$ of the equation $\chi(\mathcal{A}_{\Phi}^{[1,\nn]},t)=0$ satisfies $\Re z=\frac{\nn h}{2}$ (see \S \ref{section:Conjecture} for details).\par
Postnikov and Stanley proved this conjecture for $\Phi=A_{\degree}$ \cite{Postnikov-Stanley}. Subsequently, Athanasiadis gave proofs for $\Phi=A_{\degree}$, $B_{\degree}$, $C_{\degree}$, and $D_{\degree}$ using a combinatorial method \cite{Athanasiadis}. Yoshinaga approached the conjecture through the characteristic quasi-polynomial $\chi_{quasi}(\mathcal{A}_{\Phi}^{[1,\nn]},t)$, which was introduced by Kamiya et al.~\cite{Kamiya-Takemura-Terao_0}. The characteristic quasi-polynomial $\chi_{quasi}(\mathcal{A}_{\Phi}^{[1,\nn]},t)$ has the important property that when $t$ is relatively prime to the period of $\chi_{quasi}(\mathcal{A}_{\Phi}^{[1,\nn]},t)$, the formula $\chi_{quasi}(\mathcal{A}_{\Phi}^{[1,\nn]},t)=\chi(\mathcal{A}_{\Phi}^{[1,\nn]},t)$ holds \cite[Theorem 2.1]{Athanasiadis}. Yoshinaga has proved the following formula \cite{Yoshinaga_1} (see Theorem \ref{characteristic_quasi_poly}).
\begin{equation}\label{intro_ch}
\chi_{quasi}(\mathcal{A}_{\Phi}^{[1,\nn]},t)=\R_{\Phi}(\Sh^{\nn+1})\Eh_{\Phi}(t),
\end{equation}
where $\Sh$ is the shift operator for the variable $t$ (see \S \ref{Shift congruences}), $\Eh_{\Phi}(t)$ is the Ehrhart quasi-polynomial for the closed fundamental alcove of type $\Phi$ (see \S \ref{section:Eh_quasi}), and $\R_{\Phi}(t)$ is the generalized Eulerian polynomial of type $\Phi$, which was introduced by Lam and Postnikov \cite{Lam-Postnikov} (see \S \ref{section:generalized Eulerian}). By using this formula, Yoshinaga verified several cases of the conjecture (see \S \ref{section:Conjecture}).

\subsection{Main results}
Let $\Ehp$ be the period of the characteristic quasi-polynomial $\chi_{quasi}(\mathcal{A}_{\Phi}^{[1,\nn]},t)$. Let $\m$ be an integer with $\nn+1=\m\cdot \gcd(\nn+1,\Ehp)$. Let $\coxc_0,\cdots, \coxc_{\degree}$ be integers that are coefficients of each simple root when the highest root is expressed as a linear combination of simple roots in an irreducible root system $\Phi$ of rank $\degree$ (see \S \ref{root system}). By calculating the right-hand side of (\ref{intro_ch}), we prove the formula 
\begin{equation}\label{intro_}
\chi_{quasi}(\mathcal{A}_{\Phi}^{[1,\nn]},t)=(\prod_{j=0}^{\degree}\frac{1}{\m}[\m]_{\Sh^{\coxc_j\cdot \gcd(\nn+1,\Ehp)}}) \chi _{quasi}(\mathcal{A}_{\Phi}^{[1,\gcd(\nn+1,\Ehp)-1]},t),
\end{equation}
where $\cyc{m}{t}=\frac{1-t^{m}}{1-t}=1+t+\cdots+t^{m-1}$ (see Theorem \ref{corollary_1}). Furthermore, the characteristic quasi-polynomial $\chi_{quasi}(\mathcal{A}_{\Phi}^{[1,\nn]},t)$ has the period $\gcd(\nn+1,\Ehp)$. In particular, when the parameter $\nn+1$ is relatively prime to the period $\Ehp$ of the Ehrhart quasi-polynomial $\Eh_{\Phi}(t)$, that is, $\gcd(\nn+1,\Ehp)=1$, we have
\begin{equation}\label{intro_gcd}
\chi_{quasi}(\mathcal{A}_{\Phi}^{[1,\nn]},t)=(\prod_{j=0}^{\degree}\frac{1}{\nn+1}[\nn+1]_{\Sh^{\coxc_j}})t^{\degree}
\end{equation}
from (\ref{intro_}) (see Theorem \ref{gcd_prime}). In this case, from (\ref{intro_gcd}) and the technique used by Postnikov and Stanley in \cite{Postnikov-Stanley} (see Lemma \ref{Postnikov-Stanley's lemma}), we see that the conjecture holds. In addition, we prove the formula for the characteristic polynomial
\begin{equation}\label{intro_rad}
\chi(\mathcal{A}_{\Phi}^{[1,\gcd(\nn+1,\Ehp)-1]},t)=(\prod_{j=0}^{\degree}\frac{1}{\eta}[\eta]_{\Sh^{\coxc_j\cdot \gcd(\nn+1,\rad(\Ehp))}}) \chi(\mathcal{A}_{\Phi}^{[1,\gcd(\nn+1,\rad(\Ehp))-1]},t),
\end{equation}
where $\eta=\frac{\gcd(\nn+1,\Ehp)}{\gcd(\nn+1,\rad(\Ehp))}$ (see Theorem \ref{Ch_rad}). From (\ref{intro_}) and (\ref{intro_rad}), if all roots of the characteristic polynomial $\chi(\mathcal{A}_{\Phi}^{[1,\gcd(\nn+1,\rad(\Ehp))-1]},t)$ have the same real part $\frac{(\gcd(\nn+1,\rad(\Ehp))-1)\coxn}{2}$, then the same method as for (\ref{intro_gcd}) can be used to show that $\chi(\mathcal{A}_{\Phi}^{[1,\nn]},t)$ satisfies the conjecture. We can check the conjecture for $\Phi \in \{E_6,E_7,E_8,F_4\}$ by computing the real part of all roots of $\chi_(\mathcal{A}_{\Phi}^{[1,\gcd(\nn+1,\Ehp)-1]},t)$ using a computational approach.

\subsection{Outline of the proof}
To prove the conjecture, we transform the right-hand side of (\ref{intro_ch}) into a suitable form. One of the difficulties in this transformation is that the shift operator $\Sh$ acts on a quasi-polynomial, not a polynomial. To overcome this difficulty, we introduce the operator $\overline{\Sh}$, which acts on a constituent of a quasi-polynomial (Definition \ref{shift_bar}). Additionally, we define a quasi-polynomial $\tilde{f}^{i}(t)$ from a quasi-polynomial $f(t)$ (Definition \ref{quasi_av}). The quasi-polynomial $\tilde{f}^{i}(t)$ is like an average of the constituents of the quasi-polynomial $f(t)$, and its minimal period is a divisor of the integer $i$. Using a generalization of Lemma 2.2 in \cite{Athanasiadis} (Lemma \ref{Athanasiadis's lemma_2}), for a quasi-polynomial $f(t)$ of degree $\degree$ and period $\Ehp$, we obtain the formula
\begin{equation}\label{intro_shift_bar}
[c]_{\Sh^m}^{\degree+1}g(\Sh^m)f(t)=[c]_{\overline{\Sh}^m}^{\degree+1}g(\overline{\Sh}^m)\tilde{f}^{\mathrm{gcd}(m,\Ehp)}(t),
\end{equation}
where $g(\Sh)$ is the substituted shift operator $\Sh$ for a polynomial $g(t)$ (Proposition \ref{averaging}). 

The Ehrhart quasi-polynomial $\Eh_{\Phi}(t)$ 
decomposes into several quasi-polynomials that have a degree and period that is less than or equal to its own degree and period:
\begin{equation}\label{intro_Eh}
\Eh_{\Phi}(t)=\sum_{k\in \{\difi_0,\cdots, \difi_{\ndifi}\}}\Ehf{k}{(\degree_{k})}(t),
\end{equation}
where $\difi_0,\cdots,\difi_{\ndifi}$ are all the different integers in $\coxc_0,\cdots, \coxc_{\degree}$, $\adegree_{\difi_k}+1$ is the number of multiples of $\difi_k$ in $\coxc_0,\cdots, \coxc_{\degree}$ (see \S \ref{section:Eh_quasi}), and $\Ehf{k}{(\adegree_{k})}(t)$ is a quasi-polynomial of degree $\adegree_{k}$ with period $k$ (Proposition \ref{Eh_deco}). This decomposition is well matched with the following decomposition of generalized Eulerian polynomials, which was proved in \cite{Lam-Postnikov}.
\begin{equation}\label{intro_Eu}
\R_{\Phi}(t)=\cyc{\coxc_0}{t}\cyc{\coxc_1}{t}\cdots\cyc{\coxc_{\degree}}{t}\A_{\degree}(t),
\end{equation}
where $\A_{\degree}(t)$ is the Eulerian polynomial (Theorem \ref{Lam-Postnikov}). The right-hand side of (\ref{intro_Eu}) has the divisor $\cyc{\difi_k}{t}^{\adegree_{\difi_k}+1}$. Hence, we can apply (\ref{intro_shift_bar}) to each $\Ehf{\difi_k}{(\adegree_{\difi_k})}(t)$ of (\ref{intro_Eh}). From the above argument, we have the formula
\begin{equation}\label{intro_R}
\R_{\Phi}(\Sh^{\nn+1})\Eh_{\Phi}(t)=\R_{\Phi}(\overline{\Sh}^{\nn+1})\tilde{\Eh}^{\gcd(\nn+1,\Ehp)}_{\Phi}(t),
\end{equation}
(see Theorem \ref{main theorem}).
We can think of the operator $\R_{\Phi}(\overline{\Sh}^{\nn+1})$ as acting on a polynomial, or more precisely, on a constituent of the quasi-polynomial $\tilde{\Eh}^{\gcd(\nn+1,\Ehp)}_{\Phi}(t)$. Thus, we can easily calculate the right-hand side of (\ref{intro_R}) and prove (\ref{intro_}).\par
The remainder of this paper is organized as follows. Section \ref{section:Pre} contains some preliminaries required to prove the main results. First, we prove a generalization of Athanasiadis' Lemma \cite{Athanasiadis} in \S \ref{Shift congruences}. In \S \ref{section:Pre_quasi}, we introduce the operator $\overline{\Sh}$ and a quasi-polynomial $\tilde{f}^{i}(t)$, and prove (\ref{intro_shift_bar}). In \S \ref{sec:Pre_deco}, we prove that a decomposition of a quasi-polynomial holds using a generating function. In \S \ref{root system}, \S \ref{section:Eh_quasi}, and \S \ref{section:generalized Eulerian}, we prepare several concepts required to explain Yoshinaga's results \cite{Yoshinaga_1} for the characteristic quasi-polynomial $\chi_{quasi}(\mathcal{A}_{\Phi}^{[1,\nn]},t)$, which is explained together with the Postnikov--Stanley Linial arrangement conjecture in \S \ref{section:Conjecture}. The explanations in \S \ref{section:shift operator}, \S \ref{root system}, \S \ref{section:Eh_quasi}, \S \ref{section:generalized Eulerian}, and \S \ref{section:Conjecture} are based on \cite{Yoshinaga_1}. We prove (\ref{intro_}), (\ref{intro_gcd}), and (\ref{intro_rad}) in \S \ref{section:main_formula}. We present a table of the characteristic polynomial $\chi(\mathcal{A}_{\Phi}^{[1,\nn]},t)$ and the real part of all of its roots for $\Phi \in \{E_6,E_7,E_8,F_4\}$ in \S \ref{section:main_check}.

\section{Preliminaries}\label{section:Pre}
\subsection{Shift operator and congruence}\label{Shift congruences}
\subsubsection{Shift operator}\label{section:shift operator}
Let $f:\mathbb{Z}\rightarrow\mathbb{C}$ be a partial function, that is, a function defined on a subset of $\mathbb{Z}$. Define the action of the shift operator by
\begin{equation}
\Sh f(t)=f(t-1).	
\end{equation}
More generally, for a polynomial $g(\Sh)=\sum_{k}a_k\Sh^k$ in $\Sh$, the action is defined by
\begin{equation}
g(\Sh)f(t)=\sum_{k}a_kf(t-k).	
\end{equation}
\begin{proposition}\label{cong}(\cite{Yoshinaga_1}, Proposition 2.8)
Let $g(\Sh) \in \mathbb{C}[\Sh]$ and $f(t) \in \mathbb{C}[t]$. Suppose $\deg f=\degree$. Then, $g(\Sh)f(t)=0$ if and only if $(1-\Sh)^{\degree+1}$ divides $g(\Sh)$.
\end{proposition}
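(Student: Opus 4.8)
The plan is to exploit the fact that $1-\Sh$ acts on $\mathbb{C}[t]$ as the backward difference operator $(1-\Sh)f(t)=f(t)-f(t-1)$, which strictly lowers the degree of a polynomial. First I would record the elementary fact that if $f$ has degree $d\geq 1$ with leading coefficient $a$, then $(1-\Sh)f$ has degree $d-1$ with leading coefficient $da$, while $(1-\Sh)f=0$ when $f$ is constant. Iterating this, for $f\neq 0$ with $\deg f=\degree$ the polynomials $(1-\Sh)^{0}f,(1-\Sh)^{1}f,\dots,(1-\Sh)^{\degree}f$ have degrees $\degree,\degree-1,\dots,0$ respectively, and $(1-\Sh)^{k}f=0$ for all $k\geq\degree+1$.

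The ``if'' direction is then immediate: writing $g(\Sh)=(1-\Sh)^{\degree+1}h(\Sh)$ with $h\in\mathbb{C}[\Sh]$, and using that polynomials in $\Sh$ commute, $g(\Sh)f(t)=h(\Sh)\bigl((1-\Sh)^{\degree+1}f(t)\bigr)=h(\Sh)\cdot 0=0$.

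For the ``only if'' direction I would change basis in $\mathbb{C}[\Sh]$: since $\Sh\mapsto 1-\Sh$ is a ring automorphism, $\{(1-\Sh)^{k}\}_{k\geq 0}$ is a $\mathbb{C}$-basis of $\mathbb{C}[\Sh]$, and $(1-\Sh)^{\degree+1}\mid g(\Sh)$ is equivalent to the vanishing of the first $\degree+1$ coefficients in the expansion $g(\Sh)=\sum_{k\geq 0}b_k(1-\Sh)^{k}$. Applying $g(\Sh)$ to $f$ and discarding the terms annihilated by the first paragraph gives $0=g(\Sh)f(t)=\sum_{k=0}^{\degree}b_k(1-\Sh)^{k}f(t)$; since the summands $(1-\Sh)^{k}f$ have pairwise distinct degrees, they are linearly independent over $\mathbb{C}$, so extracting successively the degree-$\degree$, degree-$(\degree-1)$, $\dots$, degree-$0$ parts forces $b_0=b_1=\dots=b_{\degree}=0$, i.e.\ $(1-\Sh)^{\degree+1}\mid g(\Sh)$.

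I expect the only real content to be the degree bookkeeping for $1-\Sh$ (that it drops the degree by exactly one and kills constants) together with the ensuing linear independence of the $(1-\Sh)^{k}f$; the remainder is formal manipulation in the commutative ring $\mathbb{C}[\Sh]$. The degenerate case $\degree=0$, where $f$ is a nonzero constant and the statement collapses to $g(\Sh)f=f\,g(1)=0\iff(1-\Sh)\mid g(\Sh)$, is handled by the same argument.
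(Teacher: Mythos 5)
Your proof is correct. Note, however, that the paper does not prove this proposition at all: it is quoted from Yoshinaga (Proposition 2.8 of the cited work), and the only argument the paper offers is the remark immediately following it, which records that $(1-\Sh)f(t)=f(t)-f(t-1)$ lowers the degree by one and hence $(1-\Sh)^{\deg f+1}f(t)=0$ --- essentially just your ``if'' direction. Your write-up supplies the missing converse cleanly: expanding $g(\Sh)=\sum_{k\geq 0}b_k(1-\Sh)^k$ in the basis $\{(1-\Sh)^k\}_{k\geq 0}$ (legitimate, since $\Sh\mapsto 1-\Sh$ is an automorphism of $\mathbb{C}[\Sh]$), discarding the terms with $k\geq\degree+1$, and using that $(1-\Sh)^0f,\dots,(1-\Sh)^{\degree}f$ have the pairwise distinct degrees $\degree,\degree-1,\dots,0$ and are therefore linearly independent, forces $b_0=\dots=b_{\degree}=0$, i.e.\ $(1-\Sh)^{\degree+1}\mid g(\Sh)$. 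The degree bookkeeping (leading coefficient $da$ for the difference of a degree-$d$ polynomial with leading coefficient $a$, constants being killed) and the treatment of the case $\degree=0$ are both accurate, so your argument is a complete, elementary, self-contained proof of the cited result, going beyond what the paper itself records.
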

\begin{remark}
Note that, because $(1-\Sh)f(t)=f(t)-f(t-1)$ is the difference operator, $\deg(1-\Sh)f=\deg f-1$. Hence, inductively, $(1-\Sh)^{\deg f +1}f(t)=0$. Proposition \ref{Shift congruences} implies that if polynomials $g_1(\Sh)$ and $g_2(\Sh)$ satisfy the congruence
\begin{equation}\label{congruence}
g_1(t) \equiv g_2(t) \bmod (1-t)^{\degree+1},
\end{equation}
then for any polynomial $f(t)$ of degree less than or equal to $\degree$,
\begin{equation}\label{shift_equation}
g_1(\Sh)f(t)=g_2(\Sh)f(t),
\end{equation}
since $(1-\Sh)^{\degree+1}f(t)=0$.
Conversely, when $g_1(\Sh)f(t)= g_2(\Sh)f(t)$ for a polynomial $f(t)$ of degree $\degree$, (\ref{congruence}) holds.
\end{remark}

\subsubsection{Congruence}
Lemmas \ref{Athanasiadis's lemma_3/2} and \ref{Athanasiadis's lemma_2} are generalizations of Lemma 2.2 in \cite{Athanasiadis}. The proofs of these lemmas are very similar to the proof given by Athanasiadis \cite{Athanasiadis}. Let $\cyc{c}{t}:=\frac{1-t^c}{1-t}=1+t+\cdots+t^{c-1}$, where $c$ is a non-negative integer. 

\begin{lemma}\label{Athanasiadis's lemma_3/2}
If $g(t)=\sum_{k}\coef_kt^{k}$ is a polynomial and $\n$ is a positive integer, then $g(t)$ can be divided by $\cyc{\n}{t}^{\degree+1}$ if and only if the following formulas hold for any integer $r \in \{0,1,\cdots,\degree\}$.
\begin{equation}
\sum_{k \equiv 0 \bmod n}\coef_k k^r= \sum_{k \equiv 1 \bmod \n}\coef_k k^r= \cdots=\sum_{k \equiv \n-1 \bmod \n}\coef_k k^r.  
\end{equation}
\end{lemma}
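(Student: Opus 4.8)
The plan is to read off divisibility by $\cyc{\n}{t}^{\degree+1}$ from vanishing at roots of unity. Since $t^{\n}-1=(t-1)\cyc{\n}{t}$ and $t^{\n}-1$ has the $\n$ distinct $\n$-th roots of unity as simple roots, $\cyc{\n}{t}$ is squarefree and its set of roots (over $\mathbb{C}$) is exactly $\{\omega:\omega^{\n}=1,\ \omega\neq 1\}$. Hence $\cyc{\n}{t}^{\degree+1}$ divides $g(t)$ if and only if every such $\omega$ is a root of $g$ of multiplicity at least $\degree+1$, that is, $g(\omega)=g'(\omega)=\cdots=g^{(\degree)}(\omega)=0$ for every $\n$-th root of unity $\omega\neq 1$.

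Next I would trade the ordinary derivatives for the Euler operator $\theta=t\frac{d}{dt}$. A routine induction gives $\theta^{r}g=t^{r}g^{(r)}+(\text{a linear combination of the }t^{i}g^{(i)}\text{ with }i<r)$, so for each fixed $\omega\neq 0$ the vector $\big((\theta^{0}g)(\omega),\dots,(\theta^{\degree}g)(\omega)\big)$ is obtained from $\big(g(\omega),\omega g'(\omega),\dots,\omega^{\degree}g^{(\degree)}(\omega)\big)$ by a unipotent lower-triangular, hence invertible, matrix. Since roots of unity are nonzero, $\cyc{\n}{t}^{\degree+1}\mid g$ is therefore equivalent to $(\theta^{r}g)(\omega)=0$ for every $r\in\{0,\dots,\degree\}$ and every $\n$-th root of unity $\omega\neq 1$; reading this off the roots of $\cyc{\n}{t}$ once more, this says $\cyc{\n}{t}\mid \theta^{r}g$ for each $r\in\{0,\dots,\degree\}$.

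It then remains to show, for a single polynomial $h(t)=\sum_{k}c_{k}t^{k}$, that $\cyc{\n}{t}\mid h$ holds if and only if the residue-class sums $\sum_{k\equiv j \bmod \n}c_{k}$ are independent of $j\in\{0,1,\dots,\n-1\}$; applying this to $h=\theta^{r}g$, whose coefficient of $t^{k}$ is $\coef_{k}k^{r}$, yields precisely the asserted system of equalities, with $r$ ranging over $\{0,\dots,\degree\}$. For this last step I would use the discrete Fourier transform over the $\n$-th roots of unity: for any $\omega$ with $\omega^{\n}=1$, grouping exponents by their residue mod $\n$ gives $\sum_{j=0}^{\n-1}\omega^{j}\big(\sum_{k\equiv j \bmod \n}c_{k}\big)=\sum_{k}c_{k}\omega^{k}=h(\omega)$. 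If all the inner sums equal a common value $\Sh$... rather, a common value, then the left-hand side is that value times $\sum_{j=0}^{\n-1}\omega^{j}$, which vanishes for $\omega\neq 1$, so $h(\omega)=0$ at every $\n$-th root of unity other than $1$; conversely, if $h$ vanishes at all such $\omega$, then inverting the transform, $\sum_{k\equiv j \bmod \n}c_{k}=\frac{1}{\n}\sum_{\omega^{\n}=1}\omega^{-j}h(\omega)=\frac{1}{\n}h(1)$ is independent of $j$. This is essentially the computation Athanasiadis uses for \cite[Lemma 2.2]{Athanasiadis}.

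The proof carries no analytic content; the only thing to watch is the bookkeeping of the two changes of basis — the (Stirling) relation between $\theta^{r}g$ and the $t^{i}g^{(i)}$, and the Fourier relation between residue-class sums of coefficients and values at roots of unity — together with the harmless but essential remark that no root of unity equals $0$. Assembling the resulting equivalences in the correct order is, I expect, the only mild subtlety.
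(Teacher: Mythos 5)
Your argument is correct and complete. It rests on the same two ingredients as the paper's proof---the Euler operator identity $(t\frac{d}{dt})^r g(t)\big|_{t=\omega}=\sum_k a_k k^r\omega^k$ and the linear algebra of values at the nontrivial $n$-th roots of unity---but it packages the divisibility step differently. The paper's forward direction differentiates the product $[n]_t^{\ell+1}h(t)$ and invokes Leibniz's rule to get vanishing at $\omega$, then recovers equality of the residue-class sums by showing the relevant $(n-1)\times n$ Vandermonde-type matrix has one-dimensional kernel spanned by $(1,\dots,1)$; its converse is only sketched (``by induction on $\ell$''). You instead characterize $[n]_t^{\ell+1}\mid g$ by root multiplicities (using that $[n]_t$ is squarefree with roots exactly the nontrivial $n$-th roots of unity), pass between $g^{(i)}(\omega)$ and $(\theta^i g)(\omega)$ via the unipotent Stirling change of basis (valid since $\omega\neq0$), and settle the single-factor statement $[n]_t\mid h \Leftrightarrow$ equal residue-class coefficient sums by discrete Fourier inversion---which is the same linear algebra as the paper's kernel computation, just run in the invertible direction. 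The payoff of your organization is that both implications are handled symmetrically and the converse, which the paper leaves as an induction sketch, comes out fully explicit; the cost is the extra bookkeeping of the two changes of basis, which you have handled correctly (including the remark that roots of unity are nonzero). The degenerate case $n=1$ is vacuous on both sides, so nothing is lost there either.
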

\begin{proof}
Let $\omega:=\mathrm{e}^{\frac{2 \pi \sqrt{-1}}{n} }$. First, suppose that $g(t)=\cyc{\n}{t}^{\degree+1}h(t)$, where $h(t)$ is a polynomial.
\[
\sum_{k}\coef_k k^r t^k=\biggl(t\frac{d}{dt} \biggr)^r\cyc{\n}{t}^{\degree+1}h(t).
\]
Since $r \leqq \degree$ and using Leibniz's rule,
\[
\sum_{k}\coef_k k^r \omega^k=0.
\]		
From $\omega^n=1$,
\[
(\sum_{k \equiv 0 \bmod \n}\coef_k k^r) 
+(\sum_{k \equiv 1 \bmod \n}\coef_k k^r \omega)
+\cdots 
+(\sum_{k \equiv \n-1 \bmod \n}\coef_k k^r \omega^{\n-1})=0.
\]
Let $s^{(r)}_{i}:=\sum_{k \equiv i \bmod \n}\coef_k k^r$. Then,
\[s^{(r)}_0+ s^{(r)}_1\omega+\cdots+ s^{(r)}_{\n-1}\omega^{\n-1}=0.\]
Since $\omega^2,\cdots,\omega^{\n-1}$ are also $\n$-th roots of unity, we obtain the formulas
\begin{equation}\label{n-th root}
\begin{array}{llll}
s^{(r)}_0+ s^{(r)}_1\omega &+\cdots+s^{(r)}_{\n-1}\omega^{\n-1}&=0,\\
s^{(r)}_0+ s^{(r)}_1\omega^2 &+\cdots+ s^{(r)}_{\n-1}\omega^{2(\n-1)}&=0,\\
\quad \vdots\\
s^{(r)}_0+ s^{(r)}_1\omega^{\n-1} &+\cdots+ s^{(r)}_{\n-1}\omega^{(\n-1)^2}&=0.
\end{array}
\end{equation}
Let $s:=(s^{(r)}_0, s^{(r)}_1,\cdots, s^{(r)}_{\n-1})^T$. Let us define an $(\n-1) \times \n$ matrix $W$ as\[
W := \left(
\begin{array}{llll}
1 & \omega & \ldots & \omega^{\n-1} \\
1 & \omega^{2} & \ldots & \omega^{2(\n-1)} \\
\vdots & \vdots & \ddots & \vdots \\
1 & \omega^{\n-1} & \ldots & \omega^{(\n-1)^2}
\end{array}\right).
\]
We rewrite (\ref{n-th root}) as $Ws=0$. Since $\omega$ is primitive, we have that $\dim(\ker W)=1$ from Vandermonde's determinant. By $(1,\cdots,1) \in \ker W$, we obtain the formula $s_0^{(r)}=\cdots= s_{\n-1}^{(r)}$. Conversely, suppose that $s_0^{(r)}=\cdots= s_{\n-1}^{(r)}$ for any $r\in\{0,1,\cdots,\degree\}$.  Then, for any $r\in\{0,1,\cdots,\degree\}$ and $m\in\{0,\cdots,\n-1\}$,
\begin{equation}
\biggl(t\frac{d}{dt} \biggr)^rg(t)\Bigl|_{t=\omega}=\sum_{k}\coef_k k^{r}(\omega^{m})^k=0.	
\end{equation}
By induction on the parameter $\degree$, we find that the polynomial $\cyc{\n}{t}^{\degree+1}$ divides $g(t)$.
\end{proof}
We prove the following lemma using Lemma \ref{Athanasiadis's lemma_3/2}.
\begin{lemma} \label{Athanasiadis's lemma_2}
If $g(t)=\sum_{k}\coef_kt^{k}$ is a polynomial and $\n$ is a positive integer, then a polynomial $g(t)$ can be divided by $\cyc{\n}{t}^{\degree+1}$ if and only if the following formulas hold. 
\begin{equation}\label{cyclotomic congruence}
\frac{1}{n}g(t) \equiv \sum_{k\equiv 0 \bmod \n}\coef_kt^k \equiv \cdots \equiv \sum_{k\equiv \n-1 \bmod \n}\coef_kt^k \ \bmod (1-t)^{\degree+1}.
\end{equation}
\end{lemma}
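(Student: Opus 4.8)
The plan is to convert the cyclotomic congruences in (\ref{cyclotomic congruence}) into a purely numerical system of identities and then invoke Lemma \ref{Athanasiadis's lemma_3/2}. The bridge I would use is the elementary equivalence that, for polynomials $h_1(t),h_2(t)$,
\[
h_1(t)\equiv h_2(t)\ \bmod (1-t)^{\degree+1}\quad\Longleftrightarrow\quad \Bigl(t\tfrac{d}{dt}\Bigr)^{r}(h_1-h_2)\Big|_{t=1}=0\ \text{ for all }r\in\{0,1,\dots,\degree\}.
\]
By Taylor's theorem at $t=1$, the left-hand side is equivalent to the vanishing of $\frac{d^{r}}{dt^{r}}(h_1-h_2)|_{t=1}$ for $r\le\degree$; and since $\bigl(t\frac{d}{dt}\bigr)^{r}$ is a triangular combination of the operators $t^{j}\frac{d^{j}}{dt^{j}}$ for $1\le j\le r$ (with the identity appearing when $r=0$, and the transition coefficients being Stirling numbers, so the diagonal is all ones), evaluation at $t=1$ carries one system of linear conditions on polynomials of degree $\le\degree$ onto the other. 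This is the same device already used in the proof of Lemma \ref{Athanasiadis's lemma_3/2}, only now applied at the point $t=1$ instead of at the roots of unity $\omega^{m}$; applying it to consecutive members of the chain (\ref{cyclotomic congruence}) handles all the congruences at once.

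Granting this, I would first record that applying $\bigl(t\frac{d}{dt}\bigr)^{r}$ and then setting $t=1$ sends $g(t)=\sum_k\coef_k t^{k}$ to $\sum_k\coef_k k^{r}$ and sends the sub-sum $\sum_{k\equiv i\bmod\n}\coef_k t^{k}$ to $s^{(r)}_{i}:=\sum_{k\equiv i\bmod\n}\coef_k k^{r}$ (the notation from the proof of Lemma \ref{Athanasiadis's lemma_3/2}). Hence (\ref{cyclotomic congruence}) modulo $(1-t)^{\degree+1}$ holds if and only if, for every $r\in\{0,1,\dots,\degree\}$,
\[
\tfrac1{\n}\sum_k\coef_k k^{r}=s^{(r)}_{0}=s^{(r)}_{1}=\cdots=s^{(r)}_{\n-1}.
\]
Since $\sum_k\coef_k k^{r}=s^{(r)}_{0}+s^{(r)}_{1}+\cdots+s^{(r)}_{\n-1}$, the leftmost equality merely says that each $s^{(r)}_{i}$ equals the average of all of them, which happens exactly when $s^{(r)}_{0}=s^{(r)}_{1}=\cdots=s^{(r)}_{\n-1}$. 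Therefore (\ref{cyclotomic congruence}) is equivalent to the assertion that $s^{(r)}_{0}=\cdots=s^{(r)}_{\n-1}$ for every $r\in\{0,1,\dots,\degree\}$, and by Lemma \ref{Athanasiadis's lemma_3/2} this is equivalent to $\cyc{\n}{t}^{\degree+1}\mid g(t)$. Both implications of the lemma then follow simultaneously.

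The only step requiring genuine care is the bridge equivalence in the first paragraph: one must verify that replacing the functionals $f\mapsto\frac{d^{r}}{dt^{r}}f(1)$ by $f\mapsto\bigl(t\frac{d}{dt}\bigr)^{r}f(1)$ does not alter the span of the resulting conditions on $\{f:\deg f\le\degree\}$, i.e.\ that the relevant transition matrix is invertible, and to spell this out so that the equivalence is genuinely two-sided rather than just one direction. Everything after that is bookkeeping layered on top of Lemma \ref{Athanasiadis's lemma_3/2}.
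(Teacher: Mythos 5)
Your proposal is correct: the key ingredient is the same (Lemma \ref{Athanasiadis's lemma_3/2}), but the bridge you build between the power-sum identities and the congruences modulo $(1-t)^{\degree+1}$ is different from the paper's. You characterize $h_1\equiv h_2 \bmod (1-t)^{\degree+1}$ by the vanishing of $\bigl(t\frac{d}{dt}\bigr)^{r}(h_1-h_2)$ at $t=1$ for $r\le\degree$ (Taylor expansion at $t=1$ plus the unitriangular Stirling-number transition between $\bigl(t\frac{d}{dt}\bigr)^{r}$ and the ordinary derivatives), and since $\bigl(t\frac{d}{dt}\bigr)^{r}$ sends $t^{k}$ to $k^{r}t^{k}$, the chain (\ref{cyclotomic congruence}) becomes exactly the statement $s^{(r)}_0=\cdots=s^{(r)}_{\n-1}$ for all $r\le\degree$, after your (correct) observation that the term $\frac{1}{\n}g$ only asserts that each $s^{(r)}_i$ equals their average and is therefore redundant. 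The paper instead applies each partial sum $\sum_{k\equiv j\bmod\n}\coef_k\Sh^{k}$ to the single polynomial $t^{\degree}$, expands $(t-k)^{\degree}$ binomially, uses Lemma \ref{Athanasiadis's lemma_3/2} to see that these actions coincide, and then invokes Proposition \ref{cong} to convert equality of shift-operator actions on a degree-$\degree$ polynomial into the congruence modulo $(1-t)^{\degree+1}$; its converse is only indicated ``by reversing the argument.'' Your route buys a fully two-sided proof (every step is an equivalence, so both implications come simultaneously) and makes no use of Proposition \ref{cong}, essentially re-proving its content by hand via the derivative conditions at $t=1$ — the same $t\frac{d}{dt}$ device the paper already uses at the roots of unity in Lemma \ref{Athanasiadis's lemma_3/2}, so the argument is pleasantly uniform. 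What the paper's route buys is that it stays inside the shift-operator calculus used throughout, and the intermediate identity $\frac{1}{\n}g(\Sh)t^{\degree}=\sum_{k\equiv j\bmod\n}\coef_k\Sh^{k}t^{\degree}$ is precisely the operator-level form that gets reused later (e.g.\ in the proof of Proposition \ref{averaging}). The only point you flagged as needing care — invertibility of the transition matrix — is indeed settled by its unitriangularity, so there is no gap.
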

\begin{proof}
First, suppose that $g(t)=\cyc{\n}{t}^{\degree+1}h(t)$, where $h(t)$ is a polynomial. We prove the formula using the shift operator action on $f(t)= t^{\degree}$. For any $j \in \{0,1,\cdots,\n-1\}$,
\[
\begin{split}
\sum_{k\equiv j \bmod n}\coef_k\Sh^k t^{\degree}
&= \sum_{k\equiv j \bmod \n}\coef_k(t-k)^{\degree}\\
&= \sum_{k\equiv j \bmod \n}\coef_k \sum_{r=0}^{\degree} \binom{\degree}{r} (-1)^{r}k^{r}t^{\degree-r}\\
&= \sum_{r=0}^{\degree}\binom{\degree}{r}(-1)^{r} (\sum_{k\equiv j \bmod \n}\coef_kk^{r})t^{\degree-r}.
\end{split}
\]		
By Lemma \ref{Athanasiadis's lemma_3/2},
\[
\sum_{k \equiv 0 \bmod \n}\coef_k \Sh^k t^{\degree} = \sum_{k \equiv 1 \bmod \n}\coef_k \Sh^k t^{\degree} = \cdots=\sum_{k \equiv \n-1 \bmod \n}\coef_k \Sh^k t^{\degree}.
\]		
Thus, for any $j \in \{1,\cdots,n\}$,
\[
\frac{1}{\n}g(\Sh)t^{\degree} = \sum_{k\equiv j \bmod \n}\coef_k\Sh^{k}t^{\degree}.
\]
By Proposition \ref{Shift congruences},
\[
\frac{1}{\n}g(t) \equiv \sum_{k\equiv j \bmod \n}\coef_kt^k \ \bmod (1-t)^{\degree+1}.
\]
The converse is proved by following the above proof  in reverse.
\end{proof}
\subsection{Quasi-polynomial}\label{section:Pre_quasi}
A function $f:\mathbb{Z}\rightarrow \mathbb{C}$ is called a quasi-polynomial if there exists a positive integer $\p>0$ and polynomials $f_1(t),\cdots,f_{\p}(t) \in \mathbb{C}[t]$ such that
\begin{equation}
f(t) = \left\{
\begin{array}{ll}
f_1(t), & t\equiv1 \bmod \p,\\
f_2(t), & t\equiv2 \bmod \p,\\
\quad \vdots\\
f_{\p-1}(t), & t\equiv \p-1 \bmod \p,\\
f_{\p}(t), & t\equiv 0 \bmod \p.
\end{array}\right.
\end{equation}
Such a $\p$ is called the period of the quasi-polynomial $f(t)$. The minimum of the period of $f(t)$ is called the minimal period. The polynomials $f_1(t), \cdots, f_{\p}(t)$ are the constituents of $f(t)$. We define $\deg f:=\underset{1 \leqq i \leqq \p}{\max}\deg f_i$ as the degree of a quasi-polynomial $f(t)$. Moreover, if $f_r(t)=f_{\mathrm{gcd}(r,n)}(t)$ for any $r \in \{1,\cdots,\p\}$, then we say that the quasi-polynomial $f(t)$ has the gcd-property. 
\begin{remark}
We can express a quasi-polynomial as
\[
f(t)=\pf^{\degree}(t)t^{\degree}+\pf^{\degree-1}(t)t^{\degree-1}+\cdots +\pf^{0}(t),	
\]
where $\pf^{\degree}(t),\cdots, \pf^{0}(t)$ are periodic functions. The minimal period of a quasi-polynomial $f(t)$ is the least common multiple of the periods of the periodic functions $\pf^{\degree}(t),\cdots, \pf^{0}(t)$.
\end{remark}
\begin{definition}\label{quasi_av}
Let $f(t)$ be a quasi-polynomial with minimal period $\p$. Let $s$ be a positive integer.\\
\[
\begin{split}
f(t) = \left\{
\begin{array}{ll}
f_1(t), & t\equiv1 \bmod s\p,\\
f_2(t), & t\equiv2 \bmod s\p,\\
\quad \vdots\\
f_{s\p-1}(t), & t\equiv s\p-1 \bmod s\p,\\
f_{s\p}(t), & t\equiv 0 \bmod s\p.
\end{array}\right.
\end{split}
\]
We define the action of the symmetric group $\mathfrak{S}_{s\p}$ on a quasi-polynomial as follows.
\[		
\begin{split}
f^{\sigma}(t) := \left\{
\begin{array}{ll}
f_{\sigma^{-1}(1)}(t), & t\equiv1 \bmod s\p,\\
f_{\sigma^{-1}(2)}(t), & t\equiv2 \bmod s\p,\\
\quad \vdots\\
f_{\sigma^{-1}(s\p-1)}(t), & t\equiv s\p-1 \bmod s\p,\\
f_{\sigma^{-1}(s\p)}(t), & t\equiv 0 \bmod s\p,
\end{array}\right.
\end{split}
\]
where $\sigma \in \mathfrak{S}_{s\p}$. Let $\sigma_{s\p}$ be the cyclic permutation $(1,2,\cdots,s\p)\in \mathfrak{S}_{s\p}$. For any positive integer $s$, we have $f^{\sigma_{s\p}}(t)=f^{\sigma_{\p}}(t)$. In other words, the action of the cyclic permutation $\sigma_{s\p}=(1,\cdots,sn)$ on $f(t)$ does not depend on $s$. From now on, we denote a cyclic permutation $(1,\cdots,\p)$ by $\sigma$, where $\p$ takes the minimal period of a quasi-polynomial on which $\sigma$ acts in each case. Let $k$ be an integer. Define the following quasi-polynomial for $k$:
\begin{equation}
\tilde{f}^k(t):=\frac{f(t)+f^{\sigma^{k}}(t)+ f^{\sigma^{2k}}(t)+ \cdots +f^{\sigma^{(\p-1)k}}(t)}{\p}.
\end{equation}
\end{definition}
\begin{remark}\label{remark_tilde}
\begin{enumerate}[(1)]
\item Let $\p$ be a period of $f(t)$. Let $k$ be a divisor of $\p$ and $m:=\frac{\p}{k}$.
The quasi-polynomial $\tilde{f}^{k}(t)$ has the period $k$.
\[
\begin{split}
\tilde{f}^{k}(t)=\left\{
\begin{array}{ll}
\frac{f_1(t)+f_{k+1}(t)+f_{2k+1}(t)+\cdots+f_{(m-1)k+1}(t)}{m}, & t \equiv 1 \bmod k,\\
\frac{f_2(t)+f_{k+2}(t)+f_{2 k+2}(t)+\cdots+f_{(m-1)k+2}(t)}{m}, & t \equiv 2 \bmod k,\\
\quad \vdots\\
\frac{f_{k-1}(t)+f_{2k-1}(t)+f_{3k-1}(t)+\cdots+f_{mk-1}(t)}{m}, & t \equiv k-1 \bmod k,\\	
\frac{f_{k}(t)+f_{2k}(t)+f_{3k}(t)+\cdots+f_{mk}(t)}{m}, & t \equiv 0 \bmod k.\\\end{array}\right.
\end{split}
\]			
\item\label{remark_tilde_2} When a quasi-polynomial $f(t)$ has a period $\p$, we have that $\tilde{f}^k(t)=\tilde{f}^{k+\p}(t)$.
\end{enumerate}
\end{remark}
\begin{lemma}\label{sigma_linear}
Let $f(t)$, $g(t)$, and $h(t)$ be quasi-polynomials such that $f(t)=g(t)+h(t)$ holds. Then, $f^{\sigma}(t)=g^{\sigma}(t)+h^{\sigma}(t)$, that is, the action of the cyclic permutation $\sigma$ is linear.
\end{lemma}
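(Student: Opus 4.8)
The plan is to reduce everything to a single period on which all three quasi-polynomials are simultaneously presented, and then invoke the observation recorded in Definition \ref{quasi_av} that the action $f\mapsto f^{\sigma_{sp}}$ of the cyclic permutation does not depend on the multiple $s$ of the minimal period $p$. Concretely, let $p_f,p_g,p_h$ be the minimal periods of $f,g,h$, and put $N:=\mathrm{lcm}(p_f,p_g,p_h)$. Then $N$ is a common period of all three, so we may write each of them as a quasi-polynomial of period $N$ with constituent lists $f_1,\dots,f_N$, $g_1,\dots,g_N$, $h_1,\dots,h_N$, indexed so that $f_i(t)=f(t)$ for $t\equiv i\bmod N$ (with $i$ ranging over $\{1,\dots,N\}$ and $N$ playing the role of the residue class of $0$), and similarly for $g$ and $h$.

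First I would check that $f_i=g_i+h_i$ as polynomials for every $i$. The hypothesis $f(t)=g(t)+h(t)$ holds for all $t\in\mathbb{Z}$; restricting to the arithmetic progression $t\equiv i\bmod N$ shows that the polynomial $f_i-g_i-h_i$ vanishes at infinitely many integers, hence is identically zero. Next I would apply the cyclic permutation $\sigma_N=(1,2,\dots,N)\in\mathfrak{S}_N$: by the definition of the action, the $i$-th constituent of $f^{\sigma_N}$ is $f_{\sigma_N^{-1}(i)}$, which by the previous step equals $g_{\sigma_N^{-1}(i)}+h_{\sigma_N^{-1}(i)}$, i.e.\ exactly the $i$-th constituent of $g^{\sigma_N}+h^{\sigma_N}$. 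Therefore $f^{\sigma_N}(t)=g^{\sigma_N}(t)+h^{\sigma_N}(t)$ for all $t$. Finally, since $N$ is a multiple of each of $p_f,p_g,p_h$, the independence-of-$s$ statement in Definition \ref{quasi_av} gives $f^{\sigma_N}=f^{\sigma}$, $g^{\sigma_N}=g^{\sigma}$, $h^{\sigma_N}=h^{\sigma}$, and combining these yields $f^{\sigma}=g^{\sigma}+h^{\sigma}$, as claimed.

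The only step that needs to be spelled out with any care — and the one I would flag explicitly — is this passage between the individual minimal periods and the common period $N$: without the independence-of-$s$ remark, $f^{\sigma}$, $g^{\sigma}$, $h^{\sigma}$ are a priori built from three different period data and cannot be added constituent-by-constituent, so the common refinement $N$ is what makes the additivity argument legitimate. Everything else is routine linearity in the constituents; the same reasoning in fact shows $(\lambda f)^{\sigma}=\lambda f^{\sigma}$ for $\lambda\in\mathbb{C}$, so the action of $\sigma$ is $\mathbb{C}$-linear, but only additivity is needed for the lemma.
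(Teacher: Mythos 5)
Your proof is correct and follows essentially the same route as the paper: pass to a common period (you use $\mathrm{lcm}(p_f,p_g,p_h)$, the paper uses the lcm of the minimal periods of $g$ and $h$, which is automatically a period of $f$), add constituent-by-constituent, and invoke the independence-of-$s$ observation from Definition \ref{quasi_av} to identify $f^{\sigma_N}$ with $f^{\sigma}$. Your explicit check that $f_i=g_i+h_i$ via vanishing on an arithmetic progression is a detail the paper leaves implicit, but the argument is the same.
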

\begin{proof}
Let $\p$ be the minimal period of $f(t)$. Let $s\p$ be the least common multiple of the minimal periods of $g(t)$ and $h(t)$. Let $g_j(t)$ and $h_j(t)$ be constituents of $g(t)$ and $h(t)$ for $t\equiv j \bmod s\p$. Let $\sigma_{s\p}:=(1,\cdots,s\p)$. Note that we use the notation $\sigma$ as the cyclic permutation for the minimal period of a quasi-polynomial on which $\sigma$ acts, and we have
\[
\begin{split}
f^{\sigma}(t)=f^{\sigma_{s\p}}(t)&=\left\{
\begin{array}{ll}
g_{\sigma_{s\p}^{-1}(1)}(t)+h_{\sigma_{s\p}^{-1}(1)}(t), & t\equiv1 \bmod s\p,\\
g_{\sigma_{s\p}^{-1}(2)}(t)+h_{\sigma_{s\p}^{-1}(2)}(t), & t\equiv2 \bmod s\p,\\
\quad \vdots\\
g_{\sigma_{s\p}^{-1}(s\p-1)}(t)+h_{\sigma_{s\p}^{-1}(s\p-1)}(t), & t\equiv s\p-1 \bmod s\p,\\
g_{\sigma_{s\p}^{-1}(s\p)}(t)+h_{\sigma_{s\p}^{-1}(s\p)}(t), & t\equiv 0 \bmod s\p
\end{array}\right.\\
&=g^{\sigma_{s\p}}(t)+h^{\sigma_{s\p}}(t)\\
&=g^{\sigma}(t)+h^{\sigma}(t).
\end{split}
\]
\end{proof}

\begin{lemma}\label{tilde_linear}
Let $f(t)$, $g(t)$, and $h(t)$ be quasi-polynomials such that $f(t)=g(t)+h(t)$ holds. Let $k$ be an integer. Then, $\tilde{f}^{k}(t)=\tilde{g}^{k}(t)+\tilde{h}^{k}(t)$.
\end{lemma}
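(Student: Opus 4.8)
The plan is to reduce everything to a single common period and then exploit the linearity of the $\sigma$-action one power at a time. Let $\p_f$, $\p_g$, $\p_h$ be the minimal periods of $f$, $g$, $h$, and put $N:=\mathrm{lcm}(\p_f,\p_g,\p_h)$, so that $f$, $g$, $h$ may all be viewed as quasi-polynomials of period $N$. The first step is to record the averaging identity: for any quasi-polynomial $F$ with minimal period $\p$ dividing $N$,
\[
\tilde{F}^{k}(t)=\frac{1}{N}\sum_{i=0}^{N-1}F^{\sigma^{ik}}(t).
\]
Indeed, $F^{\sigma^{j}}$ depends only on $j$ modulo $\p$, because the cyclic permutation $\sigma$ of the minimal period is a $\p$-cycle and, by the remark in Definition \ref{quasi_av}, the action is insensitive to the choice of period; hence $i\mapsto F^{\sigma^{ik}}(t)$ is periodic in $i$ with period $\p$. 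Summing this sequence over the $N/\p$ full periods contained in $\{0,1,\dots,N-1\}$ gives $(N/\p)\sum_{i=0}^{\p-1}F^{\sigma^{ik}}(t)$, and dividing by $N$ returns the defining expression $\frac{1}{\p}\sum_{i=0}^{\p-1}F^{\sigma^{ik}}(t)$ of $\tilde F^{k}$.

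Next I would upgrade Lemma \ref{sigma_linear} from $\sigma$ to every power $\sigma^{j}$. Since $f(t)=g(t)+h(t)$ gives $f^{\sigma}(t)=g^{\sigma}(t)+h^{\sigma}(t)$, and the right-hand side is again a decomposition of a quasi-polynomial as a sum of two quasi-polynomials, an induction on $j$ (applying Lemma \ref{sigma_linear} to $f^{\sigma^{j-1}}=g^{\sigma^{j-1}}+h^{\sigma^{j-1}}$) yields $f^{\sigma^{j}}(t)=g^{\sigma^{j}}(t)+h^{\sigma^{j}}(t)$ for every $j\geq 0$; in particular $f^{\sigma^{ik}}(t)=g^{\sigma^{ik}}(t)+h^{\sigma^{ik}}(t)$ for all $i$. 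Combining the two steps and summing over $i\in\{0,1,\dots,N-1\}$,
\[
\tilde{f}^{k}(t)=\frac{1}{N}\sum_{i=0}^{N-1}f^{\sigma^{ik}}(t)=\frac{1}{N}\sum_{i=0}^{N-1}\bigl(g^{\sigma^{ik}}(t)+h^{\sigma^{ik}}(t)\bigr)=\tilde{g}^{k}(t)+\tilde{h}^{k}(t),
\]
which is the assertion.

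The only genuinely delicate point is the first step: the bookkeeping needed because $f$, $g$, $h$ a priori have different minimal periods, so that the averages defining $\tilde f^{k}$, $\tilde g^{k}$, $\tilde h^{k}$ run over cyclic groups of different sizes and cannot be added termwise until they are all rewritten over the common index set $\{0,1,\dots,N-1\}$. Once that normalization is in place, the conclusion is simply the linearity established in Lemma \ref{sigma_linear}, iterated; the remaining manipulations are routine.
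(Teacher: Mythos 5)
Your proof is correct and follows essentially the same route as the paper: you expand each average $\tilde f^{k},\tilde g^{k},\tilde h^{k}$ over a common period (you use $\mathrm{lcm}(\p_f,\p_g,\p_h)$ where the paper uses the product $\p_1\p_2$, an immaterial difference), split each term via the linearity of the $\sigma$-action from Lemma \ref{sigma_linear}, and recollapse. Your explicit induction extending Lemma \ref{sigma_linear} to powers $\sigma^{j}$ is a point the paper leaves implicit, but the argument is the same.
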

\begin{proof}
Let $\p_0,\p_1,\p_2$ be the minimal period of each $f(t),g(t),h(t)$. Note that $\p_1\p_2$ is a multiple of $\p_0$. Then, by Lemma \ref{sigma_linear}, Remark \ref{remark_tilde} (\ref{remark_tilde_2}), 
\[
\begin{split}
\tilde{f}^{k}(t)&=\frac{f(t)+f^{\sigma^{k}}(t)+\cdots+f^{\sigma^{(\p_0-1)k}}}{\p_0}\\
&=\frac{f(t)+f^{\sigma^{k}}(t)+\cdots+f^{\sigma^{(\p_1\p_2-1)k}}}{\p_1\p_2}\\
&=\frac{\bigl(g(t)+h(t)\bigr)+\bigl(g^{\sigma^{k}}(t)+ h^{\sigma^{k}}(t)\bigr)+\cdots+ \bigl(g^{\sigma^{(\p_1\p_2-1)k}}(t)+ h^{\sigma^{(\p_1\p_2-1)k}}(t)\bigr)}{\p_1\p_2}\\
&=\frac{g(t)+g^{\sigma^{k}}(t)+\cdots+ g^{\sigma^{(\p_1\p_2-1)k}}(t)}{\p_1\p_2}+ \frac{h(t)+h^{\sigma^{k}}(t)+\cdots+ h^{\sigma^{(\p_1\p_2-1)k}}(t)}{\p_1\p_2}\\
&=\frac{\p_2(g(t)+g^{\sigma^{k}}(t)+\cdots+ g^{\sigma^{(\p_1-1)k}}(t))}{\p_1\p_2}+ \frac{\p_1(h(t)+h^{\sigma^{k}}(t)+\cdots+ h^{\sigma^{(\p_2-1)k}}(t))}{\p_1\p_2}\\
&=\tilde{g}^{k}(t)+\tilde{h}^{k}(t).
\end{split}
\]
\end{proof}
\begin{proposition}\label{tilde-gcd}
Let $f(t)$ be a quasi-polynomial with period $\p$. Let $k$ be an integer.
\begin{equation}
\tilde{f}^{k}(t)=\tilde{f}^{\gcd(k,\p)}(t).
\end{equation}
In particular, the quasi-polynomial $\tilde{f}^{k}(t)$ has the period $\gcd(k,\p)$. 
\end{proposition}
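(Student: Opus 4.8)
The plan is to push the identity down to the level of the constituents of $f$ and to close with an elementary counting argument on $\mathbb{Z}/\p\mathbb{Z}$. The first preparatory observation is that, although $\tilde f^{k}(t)$ is defined in Definition \ref{quasi_av} as an average of $\p_{0}$ terms over the \emph{minimal} period $\p_{0}$, it may equally be written as an average over the given period $\p$. Indeed, the cyclic permutation $\sigma$ has order $\p_{0}$ and $\p_{0}\mid\p$, so the function $j\mapsto f^{\sigma^{jk}}(t)$ is periodic in $j$ with period dividing $\p_{0}$ (because $\sigma^{\p_{0}}=\mathrm{id}$); splitting $\{0,1,\dots,\p-1\}$ into $\p/\p_{0}$ blocks of length $\p_{0}$ yields $\sum_{j=0}^{\p-1}f^{\sigma^{jk}}(t)=\tfrac{\p}{\p_{0}}\sum_{j=0}^{\p_{0}-1}f^{\sigma^{jk}}(t)$, and therefore $\tilde f^{k}(t)=\tfrac1\p\sum_{j=0}^{\p-1}f^{\sigma^{jk}}(t)$. (This rewriting is already used implicitly in the proof of Lemma \ref{tilde_linear}.)

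Next I would fix representatives and compute. Regard $f$ as a quasi-polynomial of period $\p$ with constituents $f_{1},\dots,f_{\p}$, with indices read modulo $\p$; then the constituent of $f^{\sigma^{jk}}$ on the class $t\equiv i\bmod\p$ is $f_{i-jk}$, so that $\tilde f^{k}(t)=\tfrac1\p\sum_{j=0}^{\p-1}f_{i-jk}(t)$ for $t\equiv i\bmod\p$. Put $d:=\gcd(k,\p)$. The affine map $j\mapsto i-jk$ on $\mathbb{Z}/\p\mathbb{Z}$ has image the coset of residues congruent to $i$ modulo $d$ (which has $\p/d$ elements) and every fibre of cardinality exactly $d$, since the homomorphism $j\mapsto jk$ has kernel $(\p/d)\mathbb{Z}/\p\mathbb{Z}$. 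Hence $\sum_{j=0}^{\p-1}f_{i-jk}(t)=d\sum f_{\ell}(t)$, the sum taken over those $\ell\in\{1,\dots,\p\}$ with $\ell\equiv i\bmod d$, and consequently $\tilde f^{k}(t)=\tfrac{1}{\p/d}\sum_{\ell\equiv i\bmod d}f_{\ell}(t)$ for $t\equiv i\bmod\p$.

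Finally I would read off both assertions from this formula. The right-hand side depends on $i$ only through $i\bmod d$, so $\tilde f^{k}$ has period $d=\gcd(k,\p)$. Moreover, repeating the second paragraph verbatim with $k$ replaced by $d$ — legitimate because $d\mid\p$ and $\gcd(d,\p)=d$ — produces exactly the same constituents, whence $\tilde f^{k}(t)=\tilde f^{d}(t)=\tilde f^{\gcd(k,\p)}(t)$; alternatively one may recognize the displayed expression as the constituents of $\tilde f^{d}$ given by Remark \ref{remark_tilde}(1) with $m=\p/d$. The only delicate point is the bookkeeping with a possibly non-minimal period $\p$ in the first step together with the fibre count for $j\mapsto jk$ on $\mathbb{Z}/\p\mathbb{Z}$; everything else is routine, and I expect the write-up to be short.
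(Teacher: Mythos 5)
Your argument is correct. It rests on the same underlying fact as the paper's proof of Proposition \ref{tilde-gcd} --- that multiplication by $k$ on $\mathbb{Z}/n\mathbb{Z}$ has the same image as multiplication by $d=\gcd(k,n)$ --- but it is organized differently, and in two respects more carefully. The paper works at the level of the summands $f^{\sigma^{jk}}$: it proves the set identity $\{[k],[2k],\dots,[(n-1)k]\}=\{[d],[2d],\dots,[(n-1)d]\}$ by explicit gcd manipulations plus a bijection, and then equates the two $n$-term sums; strictly speaking, equating the sums requires the common values to be attained with equal multiplicities, a point the paper leaves implicit, whereas your fibre count (kernel $(n/d)\mathbb{Z}/n\mathbb{Z}$, every value hit exactly $d$ times) supplies exactly this. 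You also descend to constituents and obtain the closed formula $\tilde f^{k}(t)=\frac{1}{n/d}\sum_{\ell\equiv i \bmod d}f_{\ell}(t)$ for $t\equiv i\bmod n$, from which the identity $\tilde f^{k}=\tilde f^{\gcd(k,n)}$ and the period statement drop out simultaneously (in the paper the period claim is read off from Remark \ref{remark_tilde}(1), whose description of the constituents your formula reproduces). Finally, your opening reduction --- rewriting $\tilde f^{k}$ as an average over an arbitrary period $n$ rather than the minimal period used in Definition \ref{quasi_av} --- makes explicit a passage the paper performs silently, since the proposition is stated for any period. In short, your route is a structural reformulation of the paper's argument; what it buys is an explicit constituent formula and airtight multiplicity and period bookkeeping, at no extra length.
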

\begin{proof}	
Let $[b]:=b+\p\mathbb{Z} \in \mathbb{Z}/n \mathbb{Z}$. We will prove that
\begin{equation}\label{modn}
\{[k],[2k],\cdots,[(\p-1)k] \} = \{[\mathrm{gcd}(k,\p)],[2\mathrm{gcd}(k,\p)],\cdots,[(\p-1)\mathrm{gcd}(k,\p)]\}.
\end{equation}
If (\ref{modn}) holds, then from the relation $f^{\sigma^i}(t)=f^{\sigma^{i+\p}}(t)$, we have that
\[
f(t)+f^{\sigma^{k}}(t)+\cdots+f^{\sigma^{(\p-1)k}}(t)=f(t)+f^{\sigma^{\mathrm{gcd}(k,\p)}}(t)+\cdots +f^{\sigma^{(\p-1)\mathrm{gcd}(k,\p)}}(t).
\]
First, if there exists an integer $m\in \{1,\cdots,\p-1\}$ such that $[m \frac{k}{\gcd(k,\p)}]=[0]$ holds, then $[m \gcd(k,\p)]=[0]$. Actually, if we write $m\frac{k}{\gcd(k,\p)}=q\p$, where $q \in \mathbb{Z}$, then the following formula holds.
\[
\begin{split}
m \gcd(k,\p)&=q\p \frac{\gcd(k,\p)}{k} \gcd(k,\p)\\
&= \p \frac{\gcd(qk\gcd(k,\p),q\p\gcd(k,\p))}{k}\\
&= \p \frac{\gcd(qk\gcd(k,\p),mk)}{k}\\
&= \p \gcd(q \gcd(k,\p),m).
\end{split}
\]
In other words, if $[m \frac{k}{\gcd(k,\p)}]=[0]$, then 
\[
[mk]=[m \frac{k}{\gcd(k,\p)} \gcd(k,\p)]=[0] \in \{[\mathrm{gcd}(k,\p)],[2\mathrm{gcd}(k,\p)],\cdots,[(\p-1)\mathrm{gcd}(k,\p)]\}.
\]
Next, we suppose that an integer $m \in \{1,\cdots,\p-1\}$ satisfies $[m\frac{k}{\gcd(k,\p)}]\neq[0]$. Then, there exists $m_k \in \{1,\cdots,\p-1\}$ with $[m_k]=[m\frac{k}{\gcd(k,\p)}]$. Hence,
\[
[mk]=[m\frac{k}{\gcd(k,\p)}\mathrm{gcd}(k,\p)]=[m_k\mathrm{gcd}(k,\p)] \in \{[\mathrm{gcd}(k,\p)],[2\mathrm{gcd}(k,\p)],\cdots,[(\p-1)\mathrm{gcd}(k,\p)]\}.
\]
Thus, $\{[k],[2k],\cdots,[(\p-1)k] \} \subset \{[\mathrm{gcd}(k,\p)],[2\mathrm{gcd}(k,\p)],\cdots,[(\p-1)\mathrm{gcd}(k,\p)]\}$.
Because the map
\[
\mapel{\phi_{k_{\p}}}{\{[\mathrm{gcd}(k,\p)],[2\mathrm{gcd}(k,\p)],\cdots,[(n-1)\mathrm{gcd}(k,\p)]\}}{\{[k],[2k],\cdots,[(\p-1)k]\}}{[x]}{[k_\p x]}
\]
is bijective, we have that $\{[k],[2k],\cdots,[(\p-1)k] \} = \{[\mathrm{gcd}(k,\p)],[2\mathrm{gcd}(k,\p)],\cdots,[(\p-1)\mathrm{gcd}(k,\p)]\}$. 
\end{proof}

We now prepare a lemma on greatest common divisors that will be used later.
\begin{lemma}\label{invgcd}
Let $\p$ and $\dg$ be integers. Then, for any integers $\mug_0 \in \mathbb{Z}$,
\begin{equation}
\gcd(\dg+\mug_0 \rad(\p) \gcd(\dg,\p),\p)=\gcd(\dg,\p),
\end{equation}
where $\rad(\p):=\underset{p:prime,p | \p}{\prod}p$ is a radical of $\p$.
\end{lemma}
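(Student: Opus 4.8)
The plan is to factor out the common divisor and then settle the claim by a one-line prime-by-prime argument. Write $g:=\gcd(\dg,\p)$. First I would record the trivial inclusion: since $g\mid\dg$ and $g\mid\mug_0\rad(\p)g$, we have $g\mid\dg+\mug_0\rad(\p)\gcd(\dg,\p)$, and as $g\mid\p$ this gives $g\mid\gcd(\dg+\mug_0\rad(\p)\gcd(\dg,\p),\p)$. So only the reverse divisibility --- equivalently, the asserted equality --- needs to be proved.

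Next I would pass to reduced variables. Since $\p>0$ we have $g>0$, so we may write $\dg=g\dg'$ and $\p=g\m$ with $\gcd(\dg',\m)=1$, this last being the defining property of $g=\gcd(\dg,\p)$. Pulling the factor $g$ out of both arguments, using $\gcd(gx,gy)=g\gcd(x,y)$ for $g>0$, gives
\[
\gcd(\dg+\mug_0\rad(\p)g,\p)=g\cdot\gcd(\dg'+\mug_0\rad(\p),\m).
\]
Hence the lemma is equivalent to the statement $\gcd(\dg'+\mug_0\rad(\p),\m)=1$.

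This coprimality is the only real content of the lemma, and it is where the radical enters. Let $p$ be a prime with $p\mid\m$. Then $p\mid\p$ because $\m\mid\p$, so $p\mid\rad(\p)$ by the definition of the radical, and therefore $p\mid\mug_0\rad(\p)$. If $p$ also divided $\dg'+\mug_0\rad(\p)$, it would divide $\dg'$, contradicting $\gcd(\dg',\m)=1$. Thus no prime divides both $\dg'+\mug_0\rad(\p)$ and $\m$, so $\gcd(\dg'+\mug_0\rad(\p),\m)=1$; combined with the displayed identity, this yields $\gcd(\dg+\mug_0\rad(\p)\gcd(\dg,\p),\p)=g=\gcd(\dg,\p)$.

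I do not expect a genuine obstacle; the only care needed is with degenerate inputs (e.g.\ $\dg=0$, or $\p=1$, where $\rad(\p)$ is the empty product $1$), and in each of these the factorization step and the coprimality claim remain literally correct, so no separate treatment is needed. The conceptual point is simply that replacing $\dg$ by $\dg+\mug_0\rad(\p)\gcd(\dg,\p)$ leaves $\dg/\gcd(\dg,\p)$ unchanged modulo every prime divisor of $\p$, and hence cannot create any new common factor with $\p/\gcd(\dg,\p)$.
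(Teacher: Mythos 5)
Your proof is correct and follows essentially the same route as the paper's: write $g=\gcd(\dg,\p)$, factor it out of both arguments via $\gcd(gx,gy)=g\gcd(x,y)$, and observe that $\gcd(\dg/g+\mug_0\rad(\p),\p/g)=1$. The only difference is that you spell out the prime-by-prime justification of this coprimality (every prime of $\p/g$ divides $\rad(\p)$, so it cannot divide $\dg/g+\mug_0\rad(\p)$ without dividing $\dg/g$), which the paper asserts in one word ("Hence"); this is a welcome clarification, not a different argument.
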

\begin{proof}
Note that $\gcd(\frac{\dg}{\gcd(\dg,n)}, \frac{n}{\gcd(\dg,n)})=\frac{\gcd(\dg,n)}{\gcd(\dg,n)}=1$. Hence, for any integer $\mug_0$, we have that $\gcd(\frac{\dg}{\gcd(\dg,n)}+\mug_0 \rad(\p), \frac{n}{\gcd(\dg,n)})=1$. Therefore,
\[
\begin{split}
&\gcd(\dg+\mug_0 \rad(\p) \gcd(\dg,\p),\p)\\
&= \gcd(\dg,n)\gcd(\frac{\dg}{\gcd(\dg,n)}+\mug_0 \rad(\p), \frac{n}{\gcd(\dg,n)})\\
&=\gcd(\dg,\p).
\end{split}
\]
\end{proof}
\begin{proposition}\label{constituent_inv}
Let $f(t)$ be a quasi-polynomial of period $\p$ with the gcd-property. Let $k$ be a positive integer. Let $\tilde{f}^{\gcd(k,\p)}_j(t)$ be the constituent of the quasi-polynomial $\tilde{f}^{\gcd(k,\p)}(t)$ for $t \equiv j \bmod \gcd(k,\p)$. If $\gcd(j,\p)=1$
, then 
\begin{equation}
\tilde{f}^{\gcd(k,\p)}_j(t)=\tilde{f}^{\gcd(k,\rad(\p))}_j(t).
\end{equation}
\end{proposition}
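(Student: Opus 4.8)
Write $d:=\gcd(k,\p)$ and $e:=\gcd(k,\rad(\p))$. Since $d,e\mid\p$, both $\tilde f^{d}$ and $\tilde f^{e}$ are described explicitly by Remark~\ref{remark_tilde} (and have periods $d$ and $e$ by Proposition~\ref{tilde-gcd}, which matches the subscripts in the statement), so the asserted identity is exactly $\tilde f^{d}_{j}(t)=\tilde f^{e}_{j}(t)$. The plan is to evaluate both constituents in closed form and observe that the answer depends only on the set of primes dividing both $k$ and $\p$. The key structural observation is that $d$ and $e$ have the same prime divisors: a prime $p$ divides $d$ iff $p\mid k$ and $p\mid\p$, and since $\rad(\p)$ has the same prime divisors as $\p$, this holds iff $p\mid k$ and $p\mid\rad(\p)$, i.e.\ iff $p\mid e$. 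In particular $e\mid d\mid\p$.

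Let $M:=\p\big/\prod_{p\mid d}p^{v_p(\p)}$ be the largest divisor of $\p$ coprime to $k$, so that $\p=M\cdot(\p/M)$ with $\gcd(M,\p/M)=1$ and the prime divisors of $\p/M$ are precisely those of $d$. For $g\in\{d,e\}$ the prime supports of $g$ and of $M$ are complementary inside that of $\p$, hence $\gcd(g,M)=1$ and $M\mid\p/g$. Using the description of $\tilde f^{g}$ in Remark~\ref{remark_tilde} together with the gcd-property of $f$, I would write, for any integer $j$,
\[
\tilde f^{g}_{j}(t)=\frac{g}{\p}\sum_{i=0}^{\p/g-1}f_{\gcd(ig+j,\,\p)}(t).
\]
Now the hypothesis $\gcd(j,\p)=1$ enters: for each prime $p\mid d$ we have $p\mid ig$ and $p\nmid j$, so $p\nmid ig+j$; as the primes of $\p/M$ are exactly those of $d$, this gives $\gcd(ig+j,\,\p/M)=1$ and therefore $\gcd(ig+j,\p)=\gcd(ig+j,M)$. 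Moreover $g$ is a unit modulo $M$, and as $i$ ranges over $\{0,1,\dots,\p/g-1\}$ (a complete residue system modulo $\p/g$, with $M\mid\p/g$), the quantity $ig+j$ attains each residue class modulo $M$ exactly $(\p/g)/M$ times. Hence
\[
\tilde f^{g}_{j}(t)=\frac{g}{\p}\cdot\frac{\p/g}{M}\sum_{\ell\in\mathbb{Z}/M\mathbb{Z}}f_{\gcd(\ell,\,M)}(t)=\frac{1}{M}\sum_{\ell\in\mathbb{Z}/M\mathbb{Z}}f_{\gcd(\ell,\,M)}(t),
\]
which does not depend on $g$. Applying this with $g=d$ and with $g=e$ gives $\tilde f^{d}_{j}(t)=\tilde f^{e}_{j}(t)$, which is the claim.

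The step that needs the most care is purely bookkeeping with prime factorizations: that $d$ and $e$ have equal prime support, that $M$ is coprime to $g$ and divides $\p/g$ for $g\in\{d,e\}$, and that the gcd-property is applied with the correct convention for indices reduced modulo $\p$. The only genuinely substantive ingredient is the reduction $\gcd(ig+j,\p)=\gcd(ig+j,M)$, which is precisely where $\gcd(j,\p)=1$ is used and which is in the spirit of Lemma~\ref{invgcd}; one could alternatively derive the stability of these gcd's from that lemma.
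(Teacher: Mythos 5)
Your proof is correct, but it takes a genuinely different route from the paper. The paper proves the identity by showing the equality of the two collections of gcd values, $\{\gcd(j+\mug\gcd(k,\p),\p)\}_{\mug=0}^{\p-1}=\{\gcd(j+\mug\gcd(k,\rad(\p)),\p)\}_{\mug=0}^{\p-1}$, via two inclusions: one direction is a direct rescaling of the index $\mug$, the other rests on writing $\mug\gcd(k,\rad(\p))$ as $\mug_1\gcd(k,\p)+\mug_2\mug_3\rad(\p)\gcd(j+\mug_1\gcd(k,\p),\p)$ and invoking the gcd-stability Lemma \ref{invgcd}; the gcd-property then converts this into equality of the averaged sums. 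You instead evaluate each constituent in closed form: using that $\gcd(k,\p)$ and $\gcd(k,\rad(\p))$ have the same prime support, that both are coprime to the $k$-free part $M$ of $\p$, and that $ig+j$ equidistributes modulo $M$ as $i$ runs over a complete residue system modulo $\p/g$, you obtain $\tilde f^{g}_j(t)=\frac{1}{M}\sum_{\ell\bmod M}f_{\gcd(\ell,M)}(t)$ for $g\in\{\gcd(k,\p),\gcd(k,\rad(\p))\}$, which is manifestly independent of $g$. Your computation buys two things: it keeps track of multiplicities exactly (the equality of sums really needs the gcd values to agree as a multiset, which your counting delivers directly, whereas the paper's two-inclusion argument is stated at the level of sets), and it bypasses Lemma \ref{invgcd} entirely while producing an explicit formula for the constituent that could be reused elsewhere. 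The paper's argument, in turn, stays closer to elementary gcd manipulations and is the natural context for Lemma \ref{invgcd}. One small point of care in your write-up: the constituent formula $\tilde f^{g}_j(t)=\frac{g}{\p}\sum_{i=0}^{\p/g-1}f_{\gcd(ig+j,\p)}(t)$ should be justified from Definition \ref{quasi_av} (averaging over $\p$ shifts, each class $j+ig$ occurring $g$ times) together with Remark \ref{remark_tilde}, and the hypothesis $\gcd(j,\p)=1$ is used exactly where you say, to force $\gcd(ig+j,\p/M)=1$; both steps are sound.
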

\begin{proof}
We will prove that
\begin{equation}\label{set_gcd}
\{\gcd(j+\mug \gcd(k,\p),\p)\}_{\mug=0}^{\p-1}=\{\gcd(j+\mug \gcd(k,\rad(\p)),\p)\}_{\mug=0}^{\p-1}.	
\end{equation}
If (\ref{set_gcd}) holds, then from the gcd-property of $f(t)$,
\[
\begin{split}
&f_j(t)+f_{j+\gcd(k,\p)}(t)+\cdots+f_{j+(\p-1) \gcd(k,\p)}(t)\\
&= f_j(t)+f_{j+\gcd(k,\rad(\p))}(t)+\cdots+f_{j+(\p-1) \gcd(k,\rad(\p))}(t).
\end{split}
\]
Let $[b]:=b+\mathbb{Z}/n\mathbb{Z}$. Let $c:=\frac{\gcd(k,\p)}{\gcd(k,\rad(\p))}$. For any integer $\mug \in \{0,1,\cdots,\p-1\}$, there exists $\mug' \in \{0,1,\cdots,\p-1\}$ such that $[\mug']=[\mug c]$. Hence, $\{\gcd(j+\mug \gcd(k,\p),\p)\}_{\mug=0}^{\p-1} \subset \{\gcd(j+\mug \gcd(k,\rad(\p)),\p)\}_{\mug=0}^{\p-1}$. Next, we set $d:=\gcd(k,\p)$. We write $\p=\rg_1^{\sg_1}\rg_2^{\sg_2}\cdots \rg_m^{\sg_m}$ and $\dg= \rg_1^{\qg_1 \ig_1}\rg_2^{\qg_2 \ig_2}\cdots \rg_m^{\qg_m \ig_m}$, where $\rg_1,\rg_2,\cdots, \rg_m$ are primes, $\sg_1,\cdots,\sg_m,\qg_1,\cdots,\qg_m$ are positive integers, and $\ig_1,\cdots,\ig_m \in \{0,1\}$, and then we define $\invgcd_\p:=\rg_1^{\sg_1(1-\ig_1)} \rg_2^{\sg_2(1-\ig_2)}\cdots \rg_m^{\sg_m(1-\ig_m)}$. Note that $\gcd(d, \invgcd_\p)=1$ and any divisor of $\p$ that is relatively prime to $\dg$ divides $\invgcd_\p$. 
We have $\gcd(\frac{d}{\gcd(k,\rad(\p))},\frac{\rad(\p) \invgcd_\p}{\gcd(k,\rad(\p))})=1$
since $\gcd(d,\frac{\rad(\p)}{\gcd(k,\rad(\p))})=1$ and $\gcd(d,\invgcd_\p)=1$. Hence, for any integer $\mug \in \{0,1,\cdots,\p-1\}$, there exist integers $\mug_1,\mug_2 \in \mathbb{Z}$ such that $\mug=\mug_1\frac{d}{\gcd(k,\rad(\p))}+\mug_2\frac{\rad(\p) \invgcd_\p}{\gcd(k,\rad(\p))}$. We transform the formula
\begin{eqnarray}\label{aaa}
j+\mug \gcd(k,\rad(\p))&=&j+\Bigl(\mug_1\frac{d}{\gcd(k,\rad(\p))}+\mug_2\frac{\rad(\p) \invgcd_\p}{\gcd(k,\rad(\p))} \Bigr) \gcd(k,\rad(\p)) \nonumber \\
&=&j+\mug_1 d+\mug_2 \rad(\p) \invgcd_\p.
\end{eqnarray}
The integer $\gcd(j+\mug_1 d,\p)$ is relatively prime to $d$ since $\gcd(j+\mug_1 d,d)=1$. Since any divisor of $\p$ that is relatively prime to $\dg$ divides $\invgcd_\p$, $\gcd(j+\mug_1 \dg,\p)$ divides  $\invgcd_\p$. Let $\mug_3:=\frac{\invgcd_\p}{\gcd(j+\mug_1 d,\p)} \in \mathbb{Z}$. From (\ref{aaa}), we obtain
\begin{equation}\label{bbb}
j+\mug \gcd(k,\rad(\p))=j+\mug_1 d+\mug_2 \mug_3\rad(\p)\gcd(j+\mug_1 d,\p).
\end{equation}
Hence, using Lemma \ref{invgcd} for the right-hand side of (\ref{bbb}), we have the formula $\gcd(j+\mug \gcd(k,\rad(\p)),\p)=\gcd(j+\mug_1 \dg,\p)=\gcd(j+\mug_1 \gcd(k,\p),\p)$. Furthermore, since there exists an integer $\mug'_1 \in \{0,1,\cdots,\p-1\}$ such that $[\mug_1]=[\mug'_1]$, we have that $\{\gcd(j+\mug \gcd(k,\rad(\p)),\p)\}_{\mug=0}^{\p-1} \subset \{\gcd(j+\mug \gcd(k,\p),\p)\}_{\mug=0}^{\p-1}$.
\end{proof}

\begin{definition}\label{shift_bar}
Let $f(t)$ be a quasi-polynomial with period $\p$ as follows.\\
\[
\begin{split}
f(t) &= \left\{
\begin{array}{ll}
f_1(t), & t\equiv1 \bmod \p,\\
f_2(t), & t\equiv2 \bmod \p,\\
\quad \vdots\\
f_{\p-1}(t), & t\equiv \p-1 \bmod \p,\\
f_{\p}(t), & t\equiv 0 \bmod \p.
\end{array}\right.
\end{split}
\]
We define the operator $\overline{\Sh}$ as follows.
\[
\begin{split}
(\overline{\Sh}f) := \left\{
\begin{array}{ll}
f_1(t-1), & t\equiv1 \bmod \p,\\
f_2(t-1), & t\equiv2 \bmod \p,\\
\quad \vdots\\
f_{\p-1}(t-1), & t\equiv \p-1 \bmod \p,\\
f_{\p}(t-1), & t\equiv 0 \bmod \p.
\end{array}\right.
\end{split}
\]
\end{definition}
\begin{remark}
The operators $\Sh$ and $\overline{\Sh}$ have the relation
\[
\begin{split}
(\Sh f)(t) &= \left\{
\begin{array}{ll}
f_{\p}(t-1), & t\equiv1 \bmod \p,\\
f_1(t-1), & t\equiv2 \bmod \p,\\
\quad \vdots\\
f_{\p-2}(t-1), & t\equiv \p-1 \bmod \p,\\
f_{\p-1}(t-1), & t\equiv 0 \bmod \p,
\end{array}\right.\\
\\
&= \left\{
\begin{array}{ll}
f_{\sigma^{-1}(1)}(t-1),&t\equiv1 \bmod \p,\\
f_ {\sigma^{-1}(2)}(t-1),&t\equiv2 \bmod \p,\\
\quad \vdots\\
f_ {\sigma^{-1}(\p-1)}(t-1),&t\equiv \p-1 \bmod \p,\\
f_ {\sigma^{-1}(\p)}(t-1),&t\equiv 0 \bmod \p,
\end{array}\right.\\
\\
&= (\overline{\Sh}f^{\sigma})(t).
\end{split}
\]		
\end{remark}
\begin{lemma}\label{S_bar_linear}
\begin{enumerate}[(1)]
\item Let $f(t)$ and $g(t)$ be quasi-polynomials, which may have different minimal periods. Then, $\overline{\Sh}(f(t)+g(t))=\overline{\Sh}f(t)+\overline{\Sh}g(t)$, that is, the operator $\overline{\Sh}$ is linear. 
\item For any quasi-polynomial $h(t)$, $(\overline{\Sh}-1)^{\deg h+1}h(t)=0$.
\end{enumerate}
\end{lemma}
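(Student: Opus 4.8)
The plan is to reduce both statements to standard facts about the ordinary shift operator acting on a single polynomial, applied constituent by constituent.

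For part (1), the only genuine subtlety is that Definition~\ref{shift_bar} presents $\overline{\Sh}$ through a choice of period, whereas $f$ and $g$ need not share a minimal period, so one cannot immediately add their constituents. First I would verify that $\overline{\Sh}$ does not depend on the period used: if $f$ has period $\p$ and one instead regards it as a quasi-polynomial with period $N=s\p$, then on the class $t\equiv j\bmod N$ its constituent is $f_i$, where $i\in\{1,\dots,\p\}$ is the representative of $j$ modulo $\p$, and both presentations send it to $f_i(t-1)$; hence the two resulting versions of $\overline{\Sh}f$ agree as functions $\mathbb{Z}\to\mathbb{C}$. Granting this, take $N$ to be the least common multiple of the minimal periods of $f$ and $g$. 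Then $f$, $g$, and $f+g$ are all quasi-polynomials of period $N$, the period-$N$ constituents of $f+g$ are the sums $f_j+g_j$ of the corresponding constituents of $f$ and $g$, and on the class $t\equiv j\bmod N$ one has $(\overline{\Sh}(f+g))(t)=f_j(t-1)+g_j(t-1)=(\overline{\Sh}f)(t)+(\overline{\Sh}g)(t)$. Since $j$ is arbitrary, $\overline{\Sh}(f+g)=\overline{\Sh}f+\overline{\Sh}g$.

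For part (2), write $h$ with period $\p$ and constituents $h_1,\dots,h_{\p}$, each a polynomial of degree at most $\deg h$. An induction on $k$ from Definition~\ref{shift_bar} shows that $\overline{\Sh}^{k}h$ has constituent $h_i(t-k)$ on the class $t\equiv i\bmod\p$; in other words, $\overline{\Sh}$ acts on that class exactly as the ordinary shift operator $\Sh$ acts on the polynomial $h_i$. Expanding $(\overline{\Sh}-1)^{\deg h+1}$ by the binomial theorem, its action on $h$ restricted to the class $t\equiv i\bmod\p$ equals $(\Sh-1)^{\deg h+1}h_i(t)$, which vanishes because $\deg h_i\le\deg h$, so $(1-\Sh)^{\deg h_i+1}$ divides $(\Sh-1)^{\deg h+1}$ and hence the latter annihilates $h_i$ (Proposition~\ref{cong}, equivalently the remark following it that the difference operator $1-\Sh$ strictly lowers the degree of a polynomial). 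As this holds on every residue class modulo $\p$, we obtain $(\overline{\Sh}-1)^{\deg h+1}h(t)=0$ identically.

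I do not anticipate a real obstacle here: the entire content of the lemma is the bookkeeping in part (1) showing that $\overline{\Sh}$ is insensitive to the period chosen to present a quasi-polynomial, which is precisely what licenses adding the constituents of $f$ and $g$ term by term; part (2) is then immediate from the behaviour of the ordinary shift operator on polynomials.
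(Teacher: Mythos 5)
Your proof is correct and follows essentially the same route as the paper: part (1) is the same lcm-of-periods, constituent-by-constituent computation (your explicit check that $\overline{\Sh}$ is independent of the period used to present the quasi-polynomial is a worthwhile refinement the paper leaves implicit), and part (2) rests on the same degree-lowering property of the difference operator, which the paper applies directly to the quasi-polynomial by induction while you apply it constituent-wise via $(1-\Sh)$ on each polynomial piece. No gaps.
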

\begin{proof}
\begin{enumerate}[(1)]
\item Let $m$ and $n$ be the minimal periods of $f(t)$ and $g(t)$, respectively. Let $k$ be an integer. Let $f_k(t)$ and $g_k(t)$ be constituents of $f(t)$ and $g(t)$ for $t \equiv k \bmod \mathrm{lcm}(m,\p)$. If $t \equiv k \bmod \mathrm{lcm}(m,\p)$, then $\overline{\Sh}(f(t)+g(t))=f_k(t-1)+g_k(t-1)= \overline{\Sh}f(t)+\overline{\Sh}g(t)$. 
(2) By the definition of the operator $\overline{\Sh}$, the inequality $\deg ((\overline{\Sh}-1) h)<\deg h$ holds. Hence, inductively, $(\overline{\Sh}-1)^{\deg h+1} h(t)=0$.
\end{enumerate}
\end{proof}
\begin{lemma}\label{lemma_lemma}
Let $f(t)$ be a quasi-polynomial with period $\p$. Let $j$ and $m$ be integers. Let $\coxc$ be a multiple of $\frac{n}{\gcd(m,n)}$. Let $\sum_{k \equiv j \bmod c}\coef_k t^{mk}$ be a polynomial. Then,
\begin{equation}
(\sum_{k \equiv j \bmod \coxc}\coef_k\Sh^{mk}f)(t)=(\sum_{k \equiv j \bmod \coxc}\coef_k\overline{\Sh} ^{mk} f^{\sigma^{m j}})(t).
\end{equation}		
\end{lemma}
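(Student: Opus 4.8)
The plan is to reduce the statement to the single operator identity $\Sh^{N}g=\overline{\Sh}^{N}g^{\sigma^{N}}$, valid for every quasi-polynomial $g$ of period $\p$ and every integer $N\geqq 0$, and then to use the hypothesis on $\coxc$ to remove the dependence on the chosen representative of the class $j \bmod \coxc$. I would first establish $\Sh^{N}g=\overline{\Sh}^{N}g^{\sigma^{N}}$ by comparing constituents. Writing $g$ with period $\p$ and constituents $g_1,\dots,g_{\p}$, fix $i$ and take $t\equiv i \bmod \p$. On one side, $\Sh^{N}g(t)=g(t-N)$ and $t-N\equiv i-N \bmod \p$, so this equals $g_{r}(t-N)$, where $g_r$ is the constituent of $g$ for the residue class of $i-N$ modulo $\p$. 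On the other side, by the description of the $\sigma$-action in Definition \ref{quasi_av} the constituent of $g^{\sigma^{N}}$ for the class $i$ is precisely $g_r$, while $\overline{\Sh}^{N}$ (Definition \ref{shift_bar}) merely replaces the argument $t$ by $t-N$ in each constituent, so $\overline{\Sh}^{N}g^{\sigma^{N}}(t)=g_r(t-N)$ as well. (Equivalently one may iterate $(\Sh g)(t)=(\overline{\Sh}g^{\sigma})(t)$ from the Remark following Definition \ref{shift_bar}, after checking that $\Sh$ and $\overline{\Sh}$ commute.) Applying this with $N=mk$ and summing the finite family with coefficients $\coef_k$ yields, as an identity of functions $\mathbb{Z}\rightarrow\mathbb{C}$,
\[
\Bigl(\sum_{k\equiv j \bmod \coxc}\coef_k\Sh^{mk}f\Bigr)(t)=\Bigl(\sum_{k\equiv j \bmod \coxc}\coef_k\overline{\Sh}^{mk}f^{\sigma^{mk}}\Bigr)(t).
\]

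Next I would remove the dependence on $k$ in the exponent of $\sigma$. If $k\equiv j \bmod \coxc$, write $k=j+\coxc\ell$ with $\ell\in\mathbb{Z}$; since $\coxc$ is a multiple of $\frac{\p}{\gcd(m,\p)}$, the integer $m\coxc$ is a multiple of $\p$, hence $mk=mj+m\coxc\ell\equiv mj \bmod \p$. As $f$ has period $\p$, the permutations $\sigma^{mk}$ and $\sigma^{mj}$ act identically on the constituents of $f$, so $f^{\sigma^{mk}}=f^{\sigma^{mj}}$ for every $k$ occurring in the sum. Substituting this in the displayed equality and using the linearity of $\overline{\Sh}$ (Lemma \ref{S_bar_linear}) to collect the operator on the fixed quasi-polynomial $f^{\sigma^{mj}}$, we obtain
\[
\Bigl(\sum_{k\equiv j \bmod \coxc}\coef_k\Sh^{mk}f\Bigr)(t)=\Bigl(\sum_{k\equiv j \bmod \coxc}\coef_k\overline{\Sh}^{mk}f^{\sigma^{mj}}\Bigr)(t),
\]
which is the assertion.

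The only genuinely delicate point is the bookkeeping in the first identity: one must be consistent about the direction in which $\sigma$ permutes constituent indices, and one must be sure that neither $\overline{\Sh}$ nor the $\sigma$-action depends on which admissible period of $g$ is used to write it out. This last subtlety is exactly what the Remarks accompanying Definitions \ref{quasi_av} and \ref{shift_bar} provide, so no extra work is needed there. Once $\Sh^{N}g=\overline{\Sh}^{N}g^{\sigma^{N}}$ is in hand, the remainder is the one-line congruence $m\coxc\equiv 0 \bmod \p$, so I expect the proof to be short.
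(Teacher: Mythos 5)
Your proposal is correct and follows essentially the same route as the paper: the paper likewise iterates the relation $(\Sh f)(t)=(\overline{\Sh}f^{\sigma})(t)$ to get $(\Sh^{mk}f)(t)=(\overline{\Sh}^{mk}f^{\sigma^{mk}})(t)$, uses that $m\coxc$ is a multiple of $\p$ to replace $f^{\sigma^{mk}}$ by $f^{\sigma^{mj}}$ for every $k\equiv j \bmod \coxc$, and then sums term by term. Your explicit constituent-by-constituent verification of $\Sh^{N}g=\overline{\Sh}^{N}g^{\sigma^{N}}$ is just a spelled-out version of what the paper takes from the remark following Definition \ref{shift_bar}.
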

\begin{proof}
First, note that $(\Sh^{mj+m\coxc} f)(t)=(\overline{\Sh}^{mj+m\coxc} f^{\sigma^{mj +m\coxc}})(t) =(\overline{\Sh}^{mj+m\coxc}f^{\sigma^{mj}})(t)$ because $m\coxc$ is a multiple of $\p$.
\[
\begin{split}
(\sum_{k \equiv j \bmod \coxc}\coef_k\Sh^{mk}f)(t)&=(\coef_j\overline{\Sh}^{mj} f^{\sigma^{m j}})(t)+(\coef_{j+\coxc}\overline{\Sh}^{mj+m \coxc}f^{\sigma^{m j}})(t)+\cdots \\
&= (\sum_{k \equiv j \bmod \coxc}\coef_k\overline{\Sh}^{mk} f^{\sigma^{m j}})(t).
\end{split}
\]
\end{proof}
The following proposition concerns an average of  a quasi-polynomial using the cyclotomic shift operator.
\begin{proposition}\label{averaging}
Let $f(t)$ be a quasi-polynomial of degree $\degree$ with period $\p$. Let $g(t)$ be a polynomial. Let $m$ be an integer and $\coxc$ be a multiple of $\frac{\p}{\gcd(m,\p)}$. Then,
\begin{equation}
[\coxc]_{\Sh^m}^{\degree+1}g(\Sh^m)f(t)=[\coxc]_{\overline{\Sh}^m}^{\degree+1}g(\overline{\Sh}^m)\tilde{f}^{\mathrm{gcd}(m,\p)}(t).
\end{equation}
\end{proposition}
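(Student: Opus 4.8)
The plan is to collapse the operator on the left into a single polynomial, push a divisibility congruence through the constituent-wise shift $\overline{\Sh}$, and then recognise the outcome as an instance of the averaging construction $\tilde{f}^{(-)}$. Write $G(x):=[\coxc]_x^{\degree+1}g(x)=\sum_k\coef_kx^k$; this is a polynomial, divisible by $[\coxc]_x^{\degree+1}$, and the asserted identity is exactly $G(\Sh^m)f(t)=G(\overline{\Sh}^m)\tilde{f}^{\gcd(m,\p)}(t)$, so it suffices to prove the latter. First I would split the left-hand side according to residues of the exponent modulo $\coxc$, writing $G(\Sh^m)f(t)=\sum_{j=0}^{\coxc-1}\bigl(\sum_{k\equiv j\bmod \coxc}\coef_k\Sh^{mk}f\bigr)(t)$. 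Since $\coxc$ is a multiple of $\frac{\p}{\gcd(m,\p)}$, Lemma \ref{lemma_lemma} applies to each inner sum and converts it into a sum involving $\overline{\Sh}$ together with a twist of $f$ by a power of the cyclic permutation: the $j$-th inner sum equals $\bigl(\sum_{k\equiv j\bmod \coxc}\coef_k\overline{\Sh}^{mk}f^{\sigma^{mj}}\bigr)(t)=G_j(\overline{\Sh}^m)f^{\sigma^{mj}}(t)$, where $G_j(x):=\sum_{k\equiv j\bmod \coxc}\coef_kx^k$ and $G_j(\overline{\Sh}^m)$ denotes substitution of the linear, mutually commuting operators $\overline{\Sh}^m$ into $G_j$.

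Next I would eliminate the dependence of the $G_j$ on $j$. Because $[\coxc]_x^{\degree+1}$ divides $G(x)$, Lemma \ref{Athanasiadis's lemma_2} gives $G_j(x)\equiv \frac1\coxc G(x)\bmod(1-x)^{\degree+1}$ for every $j$. Each twist $f^{\sigma^{mj}}$ is again a quasi-polynomial of degree $\degree$, and $(1-\overline{\Sh}^m)^{\degree+1}$ annihilates any quasi-polynomial of degree $\leqq\degree$ — the same degree-drop induction as in Lemma \ref{S_bar_linear}(2), now applied with $\overline{\Sh}^m$ in place of $\overline{\Sh}$. Hence, writing $G_j(x)-\frac1\coxc G(x)=(1-x)^{\degree+1}H_j(x)$, we obtain $G_j(\overline{\Sh}^m)f^{\sigma^{mj}}(t)-\frac1\coxc G(\overline{\Sh}^m)f^{\sigma^{mj}}(t)=H_j(\overline{\Sh}^m)\bigl((1-\overline{\Sh}^m)^{\degree+1}f^{\sigma^{mj}}\bigr)(t)=0$. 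Summing over $j$ and using the linearity of $\overline{\Sh}$ (Lemma \ref{S_bar_linear}(1)) then gives $G(\Sh^m)f(t)=\frac1\coxc G(\overline{\Sh}^m)\bigl(\sum_{j=0}^{\coxc-1}f^{\sigma^{mj}}\bigr)(t)$.

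It remains to identify $\frac1\coxc\sum_{j=0}^{\coxc-1}f^{\sigma^{mj}}(t)$ with $\tilde{f}^{\gcd(m,\p)}(t)$. Using the relation $f^{\sigma^i}=f^{\sigma^{i+\p}}$ and the fact that $m\cdot\frac{\p}{\gcd(m,\p)}$ is a multiple of $\p$, the sequence $j\mapsto f^{\sigma^{mj}}$ admits $\frac{\p}{\gcd(m,\p)}$ as a period, which by hypothesis divides $\coxc$; the minimal period of $f$ is likewise a period of this sequence. Hence the minimal period $T$ of the sequence divides both $\coxc$ and the minimal period of $f$, so $\frac1\coxc\sum_{j=0}^{\coxc-1}f^{\sigma^{mj}}(t)=\frac1T\sum_{j=0}^{T-1}f^{\sigma^{mj}}(t)$ equals the average of $f^{\sigma^{mi}}$ over one minimal period of $f$, i.e. $\tilde{f}^{m}(t)$ in the sense of Definition \ref{quasi_av}; finally $\tilde{f}^{m}(t)=\tilde{f}^{\gcd(m,\p)}(t)$ by Proposition \ref{tilde-gcd}. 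Re-inserting $G(x)=[\coxc]_x^{\degree+1}g(x)$ completes the argument.

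I expect the main obstacle to be this last identification: one must check carefully that the \emph{truncated} average $\frac1\coxc\sum_{j=0}^{\coxc-1}$ really agrees with the full averaging operator that defines $\tilde{f}^{\gcd(m,\p)}$, and this is exactly where the divisibility hypothesis $\frac{\p}{\gcd(m,\p)}\mid \coxc$ is used in an essential way (both here and, earlier, as the hypothesis of Lemma \ref{lemma_lemma}). The remaining pieces are a mechanical combination of Lemmas \ref{Athanasiadis's lemma_2}, \ref{lemma_lemma}, \ref{S_bar_linear} and Proposition \ref{tilde-gcd}, together with the small observation that $(1-\overline{\Sh}^m)^{\degree+1}$ kills quasi-polynomials of degree at most $\degree$.
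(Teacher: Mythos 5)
Your proof is correct and follows essentially the same route as the paper's: split the operator into residue classes modulo $\coxc$, convert each class via Lemma \ref{lemma_lemma}, equalize the classes using Lemma \ref{Athanasiadis's lemma_2} together with the annihilation of degree-$\degree$ quasi-polynomials by $(1-\overline{\Sh}^m)^{\degree+1}$, and identify the resulting average with $\tilde{f}^{\gcd(m,\p)}$ via Proposition \ref{tilde-gcd}. You merely make explicit two points the paper leaves implicit (the $\overline{\Sh}^m$-version of the annihilation argument and the agreement of the truncated $\coxc$-term average with the defining average of $\tilde{f}^{m}$), which is fine.
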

\begin{proof}
Let $[\coxc]_{{\Sh}^m}^{\degree+1}g({\Sh}^m)=:\sum_{k}\coef_k\Sh^{mk}$. We calculate $[\coxc]_{{\Sh}^m}^{\degree+1}g({\Sh}^m)f(t)$ using Lemma \ref{Athanasiadis's lemma_2}, Proposition \ref{tilde-gcd}, Lemma \ref{S_bar_linear}, and Lemma \ref{lemma_lemma}.\\
\[
\begin{split}
([\coxc]_{\Sh^{m}}^{\degree+1}g(\Sh^m)f)(t)&=(\sum_{k}\coef_{k}\Sh^{mk}f)(t)\\
&=(\sum_{k \equiv 0 \bmod \coxc}\coef_k \Sh ^{mk} f)(t) +\cdots+ (\sum_{k \equiv \coxc-1 \bmod \coxc}\coef_k\Sh^{mk}f)(t)\\
&=(\sum_{k \equiv 0 \bmod \coxc}\coef_k \overline{\Sh}^{mk} f)(t) +\cdots+ (\sum_{k \equiv \coxc-1 \bmod \coxc}\coef_k \overline{\Sh}^{mk} f^{\sigma^{m(\coxc-1)}})(t)\\
&=(\frac{1}{\coxc}\sum_{k}\coef_k\overline{\Sh}^{mk} f)(t)+\cdots+(\frac{1}{\coxc}\sum_{k}\coef_k\overline{\Sh}^{mk} f^{\sigma^{m (\coxc-1)}})(t)\\
&=(\frac{1}{\coxc} \sum_{k}\coef_k\overline{\Sh}^{mk})(f(t)+ f^{\sigma^{m}}(t)+\cdots+f^{\sigma^{m (\coxc-1)}}(t))\\
&=(\sum_{k}\coef_k\overline{\Sh}^{mk}) \Biggl (\frac{f(t)+ f^{\sigma^{ m}}(t)+\cdots+f^{\sigma^{m (\coxc-1)}}(t)}{\coxc} \Biggr)\\
&=([c]_{\overline{\Sh}^m}^{\degree+1}g(\overline{\Sh}^m)\tilde{f}^{m})(t)\\
&=([c]_{\overline{\Sh}^m}^{\degree+1}g(\overline{\Sh}^m)\tilde{f}^{\gcd(m,\p)})(t).
\end{split}
\]
\end{proof}
\subsection{Decomposition of a quasi-polynomial}\label{sec:Pre_deco}
First, we summarize the relation between (quasi-)polynomial and rational functions.
\begin{lemma}\label{gene_poly}(\cite[Corollary 4.3.1]{Beck-Robinson, Stanley-EC1})
If
\[
\sum_{n=0}^{\infty}f(n)\x^n=\frac{g(\x)}{(1-\x)^{\degree+1}},
\]
then $f(t)$ is a polynomial of degree $\degree$ if and only if $g(\x)$ is a polynomial of degree at most $\degree$ and cannot be divided by $(1-\x)$.
\end{lemma}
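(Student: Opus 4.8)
The plan is to reduce everything to two classical facts: the negative binomial series
\[
\sum_{n=0}^{\infty}\binom{n+\ell}{\ell}x^n=\frac{1}{(1-x)^{\ell+1}},
\]
and the observation that the $\ell+1$ shifted binomials $\binom{t+\ell}{\ell},\binom{t+\ell-1}{\ell},\dots,\binom{t}{\ell}$ form a basis of the space $\mathbb{C}[t]_{\le\ell}$ of polynomials of degree at most $\ell$ (each is a polynomial in $t$ of degree exactly $\ell$ with leading coefficient $1/\ell!$, and they are $\ell+1$ linearly independent elements of an $(\ell+1)$-dimensional space). From the first identity, shifting the index and using that $\binom{n+\ell-j}{\ell}=0$ for $0\le n<j$, one obtains the bridge identity $\sum_{n\ge0}\binom{n+\ell-j}{\ell}x^n=\frac{x^j}{(1-x)^{\ell+1}}$ for each $j\in\{0,1,\dots,\ell\}$, which translates the expansion of $f$ in this basis into the numerator $g(x)$.

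For the forward direction, suppose $f(t)$ is a polynomial of degree $\ell$. Writing $f(t)=\sum_{j=0}^{\ell}c_j\binom{t+\ell-j}{\ell}$ in the basis above and summing the bridge identities gives $\sum_{n\ge0}f(n)x^n=\bigl(\sum_{j=0}^{\ell}c_jx^j\bigr)/(1-x)^{\ell+1}$. Since the coefficient sequence determines $g(x)=(1-x)^{\ell+1}\sum_{n\ge0}f(n)x^n$ uniquely, we conclude $g(x)=\sum_{j=0}^{\ell}c_jx^j$, a polynomial of degree at most $\ell$. Comparing the coefficient of $t^\ell$ on both sides of the basis expansion shows that the leading coefficient of $f$ equals $\frac{1}{\ell!}\sum_{j}c_j=\frac{g(1)}{\ell!}$, which is nonzero since $\deg f=\ell$; hence $g(1)\neq0$, i.e.\ $(1-x)\nmid g(x)$.

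For the converse, suppose $g(x)=\sum_{j=0}^{\ell}g_jx^j$ is a polynomial of degree at most $\ell$ with $g(1)\neq0$. Then the bridge identities give $f(n)=\sum_{j=0}^{\ell}g_j\binom{n+\ell-j}{\ell}$ for every $n\ge0$, so $f$ coincides on $\mathbb{Z}_{\ge0}$ with the polynomial $\sum_{j=0}^{\ell}g_j\binom{t+\ell-j}{\ell}\in\mathbb{C}[t]_{\le\ell}$, whose leading coefficient is $\frac{1}{\ell!}g(1)\neq0$; thus $f$ is a polynomial of degree exactly $\ell$. I do not anticipate a genuine obstacle: the only points needing care are (i) arguing in the forward direction that $g$ is actually a polynomial rather than merely a formal power series, which uniqueness of the expansion $\sum f(n)x^n=g(x)/(1-x)^{\ell+1}$ settles, and (ii) noting that the hypothesis $\deg g\le\ell$ cannot be dropped — for instance $g(x)=x^{\ell+1}$ yields the sequence $f(n)=\binom{n-1}{\ell}$ for $n\ge\ell+1$ and $f(n)=0$ for $0\le n\le\ell$, which fails to be polynomial precisely at $n=0$ since $\binom{-1}{\ell}=(-1)^\ell\neq0$.
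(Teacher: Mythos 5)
Your proof is correct. The paper does not prove this lemma at all---it is quoted as a known result from Beck--Robins/Stanley (Corollary 4.3.1), and your argument via the basis $\binom{t+\ell}{\ell},\binom{t+\ell-1}{\ell},\dots,\binom{t}{\ell}$ of $\mathbb{C}[t]_{\le\ell}$, the bridge identities $\sum_{n\ge0}\binom{n+\ell-j}{\ell}x^{n}=x^{j}/(1-x)^{\ell+1}$, and the leading-coefficient computation $\frac{g(1)}{\ell!}$ is essentially the standard textbook proof being cited, so there is nothing to reconcile. The only point you assert rather than prove is the linear independence of the shifted binomials; this is standard and immediate (evaluate a vanishing linear combination at $t=0,1,\dots,\ell$ and use triangularity), so it is not a genuine gap.
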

\begin{lemma}\label{gene_quasi_p}(\cite[Proposition 4.4.1]{Beck-Robinson, Stanley-EC1})
If 
\[
\sum_{n=0}^{\infty}f(n)\x^n=\frac{g(\x)}{h(\x)},
\]
then $f(t)$ is a quasi-polynomial of degree $\degree$ with period $\pe$ if and only if $g(\x)$ and $h(\x)$ are polynomials such that $\deg g<\deg h$ and all roots of $h(x)$ are $\pe$-th roots of unity of multiplicity at most $\degree+1$, and there is a root of multiplicity equal to $\degree+1$ (all of this assuming that $\frac{g(\x)}{h(x)}$ has been reduced to its lowest terms).
\end{lemma}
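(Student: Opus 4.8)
The plan is to prove both implications by partial fraction decomposition combined with a residue-class (roots-of-unity) splitting of the generating function, isolating as an auxiliary fact the linear independence of the characters $r\mapsto\zeta^{-r}$ of $\mathbb{Z}/\pe\mathbb{Z}$ (equivalently, the invertibility of the Vandermonde matrix in the distinct $\pe$-th roots of unity): the function $r\mapsto\sum_{\zeta}\zeta^{-r}b_\zeta$ vanishes for all $r\in\{0,\dots,\pe-1\}$ if and only if every $b_\zeta=0$. I would state and prove this small fact first and then invoke it in both directions.

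\emph{The ``if'' direction.} Assume $g(\x),h(\x)$ have the stated form. Since $\deg g<\deg h$ and every root of $h$ is a $\pe$-th root of unity, take the partial fraction expansion $\frac{g(\x)}{h(\x)}=\sum_{\zeta^{\pe}=1}\sum_{j=1}^{m_\zeta}\frac{a_{\zeta,j}}{(\x-\zeta)^{j}}$, where $m_\zeta\le\degree+1$ is the multiplicity of $\zeta$ as a root of $h$ and, by the lowest-terms hypothesis, $a_{\zeta,m_\zeta}\neq 0$ whenever $m_\zeta\ge 1$. Rewriting $\frac{1}{(\x-\zeta)^{j}}$ through $\frac{1}{(1-\x/\zeta)^{j}}$ and expanding by the binomial series, I would read off $f(n)=\sum_{\zeta^{\pe}=1}\zeta^{-n}P_\zeta(n)$ with $\deg P_\zeta=m_\zeta-1$ and nonzero leading coefficient $b_\zeta$. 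On the residue class $n\equiv r\bmod\pe$ each $\zeta^{-n}$ equals the constant $\zeta^{-r}$, so $f$ restricts there to the polynomial $f_r(n)=\sum_{\zeta}\zeta^{-r}P_\zeta(n)$ of degree $\le\max_\zeta(m_\zeta-1)=\degree$; hence $f$ is a quasi-polynomial of period $\pe$ and degree $\le\degree$. For $\deg f=\degree$: the coefficient of $n^{\degree}$ in $f_r$ is $\sum_{\zeta:\,m_\zeta=\degree+1}\zeta^{-r}b_\zeta$, and since at least one root has multiplicity exactly $\degree+1$ the $b_\zeta$ are not all zero, so by the auxiliary fact this coefficient is nonzero for at least one $r$.

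\emph{The ``only if'' direction.} Assume $f$ is a quasi-polynomial of period $\pe$ and degree $\degree$, with constituents $f_0,\dots,f_{\pe-1}$, $\degree_r:=\deg f_r\le\degree$, and $\degree_r=\degree$ for some $r$. Splitting the sum by residue class and substituting $n=\pe m+r$, the polynomial $m\mapsto f_r(\pe m+r)$ has degree $\degree_r$, so by Lemma \ref{gene_poly} one has $\sum_{m\ge 0}f_r(\pe m+r)\,\x^{\pe m}=\frac{G_r(\x^{\pe})}{(1-\x^{\pe})^{\degree_r+1}}$ for a polynomial $G_r$ with $\deg G_r\le\degree_r$ and $G_r(1)\neq 0$. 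Multiplying by $\x^{r}$ and summing over $r$, then clearing to the common denominator $(1-\x^{\pe})^{\degree+1}$, yields $\sum_{n\ge 0}f(n)\x^{n}=\frac{N(\x)}{(1-\x^{\pe})^{\degree+1}}$ with $\deg N\le(\pe-1)+\pe\degree<\pe(\degree+1)=\deg(1-\x^{\pe})^{\degree+1}$. Reducing this to lowest terms $\frac{g(\x)}{h(\x)}$ preserves $\deg g<\deg h$ and forces $h\mid(1-\x^{\pe})^{\degree+1}$, so all roots of $h$ are $\pe$-th roots of unity of multiplicity $\le\degree+1$; finally, if every multiplicity were $\le\degree$ then $h\mid(1-\x^{\pe})^{\degree}$ and the already-proved ``if'' direction would give $\deg f\le\degree-1$, a contradiction, so some root has multiplicity exactly $\degree+1$.

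The main obstacle is the two sharpness statements — that $\deg f$ equals the maximal root multiplicity minus one on the nose, and that the multiplicity $\degree+1$ is actually attained after passing to lowest terms. Both collapse to the single observation above, so once the Vandermonde/character-independence fact is in hand the rest is routine manipulation of the binomial series and of rational functions, most of it already packaged in Lemma \ref{gene_poly}.
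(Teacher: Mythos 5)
Your proposal is correct. There is nothing in the paper to compare it against: the paper states this lemma with a citation to Beck--Robins and to Stanley (Proposition 4.4.1) and gives no proof, and your partial-fraction expansion over the $\pe$-th roots of unity for the ``if'' direction together with the residue-class splitting plus Lemma \ref{gene_poly} for the ``only if'' direction is precisely the standard argument of those references (an earlier, commented-out version of this section of the source proves a variant of the lemma, with denominator $(1-\x^{\pe})^{\degree+1}$, by the same residue-class route). Your isolation of the Vandermonde/character-independence fact is exactly what is needed to get the two sharpness claims, and the contradiction argument showing some root has multiplicity exactly $\degree+1$ after reduction to lowest terms is sound, since it only uses the degree upper bound from the ``if'' direction. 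Two small points worth tightening: when you invoke Lemma \ref{gene_poly} for $m\mapsto f_r(\pe m+r)$ you also need the (standard) existence statement that a polynomial of degree $d$ has generating function of the form $G(y)/(1-y)^{d+1}$ with $\deg G\le d$, e.g.\ by expanding in the basis $\binom{t+d-k}{d}$, since Lemma \ref{gene_poly} as stated presupposes that shape rather than producing it; and residue classes on which $f$ vanishes identically should simply be dropped from the sum, as $\deg f_r$ is undefined there. Neither affects the validity of the argument.
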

The following classical lemma is called partial fraction decomposition.
\begin{lemma}\label{pfd}
Let $g(\x)$ and $h(\x)$ be polynomials with $\deg g < \deg h$. Let $h_1(\x),\cdots,h_n(\x)$ be polynomials with $h(\x)=h_1(\x)h_2(\x)\cdots h_n(\x)$ that are relatively prime to each other. Then, there exist polynomials $g_1(\x),\cdots,g_n(\x)$ such that $\deg g_i < \deg h_i$ for any $i\in\{1,\cdots,n\}$ and  
\begin{equation}
\frac{g(\x)}{h(\x)}=\frac{g_1(\x)}{h_1(\x)}+\cdots + \frac{g_n(\x)}{h_n(\x)}.
\end{equation}
\end{lemma}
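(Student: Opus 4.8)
The plan is to prove the statement by induction on $n$, reducing everything to the case of two coprime factors. The case $n=1$ is immediate: take $g_1=g$. For the inductive step I would write $h(\x)=h_1(\x)\cdot\bigl(h_2(\x)\cdots h_n(\x)\bigr)$ and observe that $h_1$ and $h_2\cdots h_n$ are relatively prime — a nonconstant common factor of $h_1$ and $h_2\cdots h_n$ would have to share a nonconstant factor with some $h_i$ for $i\geq 2$, contradicting pairwise coprimality. Hence it suffices to establish the two-factor decomposition $\frac{g}{h_1h_2}=\frac{g_1}{h_1}+\frac{g_2}{h_2}$ with $\deg g_1<\deg h_1$ and $\deg g_2<\deg h_2$, and then apply the induction hypothesis to $\frac{g'}{h_2\cdots h_n}$, where $g'$ is the numerator produced over the second factor (which has degree less than $\deg(h_2\cdots h_n)$).

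For the two-factor case the key input is that $\mathbb{C}[\x]$ is a Euclidean domain: since $\gcd(h_1,h_2)=1$, the Euclidean algorithm produces polynomials $a(\x),b(\x)$ with $a(\x)h_1(\x)+b(\x)h_2(\x)=1$ (B\'ezout's identity). Multiplying by $g$ and dividing by $h_1h_2$ gives $\frac{g}{h_1h_2}=\frac{ga}{h_2}+\frac{gb}{h_1}$, which is already a decomposition but whose numerators need not have the right degree. I would fix this by polynomial division: write $g(\x)b(\x)=q_1(\x)h_1(\x)+g_1(\x)$ and $g(\x)a(\x)=q_2(\x)h_2(\x)+g_2(\x)$ with $\deg g_1<\deg h_1$ and $\deg g_2<\deg h_2$, so that $\frac{g}{h_1h_2}=\bigl(q_1+q_2\bigr)+\frac{g_1}{h_1}+\frac{g_2}{h_2}$.

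The only point requiring care is to verify that the polynomial part $q_1+q_2$ vanishes. Putting $\frac{g_1}{h_1}+\frac{g_2}{h_2}$ over the common denominator $h_1h_2$, its numerator $g_1h_2+g_2h_1$ has degree strictly less than $\deg h_1+\deg h_2=\deg h$; since also $\deg g<\deg h$ by hypothesis, the identity $g=(q_1+q_2)h_1h_2+(g_1h_2+g_2h_1)$ forces $(q_1+q_2)h$ to be a polynomial of degree $<\deg h$, hence $q_1+q_2=0$. Taking the $g_1,g_2$ above settles the two-factor case, and the induction described in the first paragraph then yields the general statement. I do not expect a genuine obstacle here — this is the classical argument — and the only subtlety is precisely the degree bookkeeping just indicated.
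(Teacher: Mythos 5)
Your argument is correct: the reduction of the $n$-factor case to the two-factor case by induction, the use of B\'ezout's identity $a h_1 + b h_2 = 1$ in $\mathbb{C}[\x]$, and the degree count showing that the polynomial part $q_1+q_2$ must vanish are all sound. The paper itself gives no proof of this lemma---it is stated as a classical fact (partial fraction decomposition) and used as a black box---so there is no argument in the paper to compare against; what you have written is exactly the standard textbook proof and fills that omission correctly.
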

In general, a quasi-polynomial has the following decomposition into several quasi-polynomials.
\begin{proposition}\label{gene_quasi_deco}
Let $g(\x)$ and $h(\x)$ be relatively prime polynomials such that $\deg g<\deg h$ and all roots of $h(\x)$ are $\pe$-th roots of unity. Let $\divisors{\pe}$ be the set of divisors of $\pe$. Let $\mult_{i}+1$ be the number of primitive $i$-th roots of unity in the roots of $h(\x)$. If 
\[
\sum_{n=0}^{\infty}f(n)\x^n=\frac{g(\x)}{h(\x)},
\]
then there exist quasi-polynomials $f_{i}^{(\mult_i)}(t)$, $(i \in \divisors{\pe})$ of degree $\mult_i$ and period $i$ that satisfy
\begin{equation}\label{claim}
f(t)=\sum_{i \in \divisors{\pe}}f_{i}^{(\mult_i)}(t).
\end{equation}
\end{proposition}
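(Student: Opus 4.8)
The plan is to realize the decomposition at the level of generating functions, by applying partial fraction decomposition to $g(\x)/h(\x)$ after first grouping the poles according to the multiplicative order of the corresponding root of unity. For each $i\in\divisors{\pe}$ let $h_i(\x)$ be the product, taken with multiplicity, of the linear factors $(\x-\zeta)$ of $h(\x)$ for which $\zeta$ is a primitive $i$-th root of unity, so that $h(\x)=\prod_{i\in\divisors{\pe}}h_i(\x)$. Since primitive roots of unity of distinct orders are distinct, the polynomials $h_i(\x)$ are pairwise relatively prime; by construction every root of $h_i(\x)$ is an $i$-th root of unity, the largest multiplicity occurring among them being $\mult_i+1$ (an index $i$ admitting no primitive $i$-th root of $h$ simply gives $h_i(\x)=1$ and will contribute the zero summand). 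Applying Lemma \ref{pfd} to the coprime factorization $h=\prod_i h_i$ produces polynomials $g_i(\x)$ with $\deg g_i<\deg h_i$ and $g(\x)/h(\x)=\sum_{i\in\divisors{\pe}}g_i(\x)/h_i(\x)$.

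Next I would check that each summand $g_i(\x)/h_i(\x)$ is already in lowest terms: if a polynomial $d(\x)$ divides both $g_i(\x)$ and $h_i(\x)$, then clearing denominators in the identity above shows $d(\x)\mid g(\x)$ --- since in $g=g_i\prod_{j\neq i}h_j+\sum_{k\neq i}g_k\prod_{j\neq k}h_j$ the first term is divisible by $g_i$ and every remaining term carries the factor $h_i$ --- while also $d(\x)\mid h_i(\x)\mid h(\x)$, forcing $d(\x)$ to be constant since $g(\x)$ and $h(\x)$ are relatively prime. I then define $f_{i}^{(\mult_i)}$ by $\sum_{n\geq 0}f_{i}^{(\mult_i)}(n)\x^n=g_i(\x)/h_i(\x)$ and apply Lemma \ref{gene_quasi_p}: as the roots of $h_i(\x)$ are exactly $i$-th roots of unity with at least one primitive one present, $f_{i}^{(\mult_i)}$ is a quasi-polynomial of period $i$, and its degree is one less than the maximal multiplicity of a root of $h_i(\x)$, that is, $\mult_i$.

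Summing the power-series identities gives $\sum_{n\geq 0}\bigl(\sum_{i\in\divisors{\pe}}f_{i}^{(\mult_i)}(n)\bigr)\x^n=g(\x)/h(\x)=\sum_{n\geq 0}f(n)\x^n$, hence $f(n)=\sum_{i\in\divisors{\pe}}f_{i}^{(\mult_i)}(n)$ for all $n\geq 0$; since both sides are quasi-polynomials and each residue class modulo a common period meets the nonnegative integers infinitely often, the two sides agree as functions on $\mathbb{Z}$, which is exactly (\ref{claim}). I expect the main obstacle to be the bookkeeping in the grouping step: one has to be sure that the $h_i$ really are pairwise coprime and, above all, that the resulting partial fractions are reduced, so that Lemma \ref{gene_quasi_p} can be applied verbatim and returns precisely the degree $\mult_i$ and the period $i$ claimed in the statement; once that is in place the rest is a straightforward assembly of Lemmas \ref{pfd} and \ref{gene_quasi_p}.
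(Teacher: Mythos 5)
Your proposal follows essentially the same route as the paper: group the linear factors of $h(\x)$ by the order of the corresponding root of unity, apply Lemma \ref{pfd} to the coprime factorization $h=\prod_i h_i$, invoke Lemma \ref{gene_quasi_p} for each summand $g_i(\x)/h_i(\x)$, and compare coefficients of the power series. Your additional checks (that each $g_i/h_i$ is already in lowest terms, so Lemma \ref{gene_quasi_p} applies with the stated degree $\mult_i$ and period $i$, and the treatment of divisors $i$ contributing $h_i=1$) are careful refinements of details the paper leaves implicit, not a different argument.
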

\begin{proof}
Let $\{h_i(\x)\}_{i \in \divisors{\pe}}$ be polynomials such that all roots of $h_i(\x)$ are primitive $i$-th roots of unity in the roots of $h(\x)$ and $h(\x)=\prod_{i \in \divisors{\pe}}h_i(\x)$. By Lemma \ref{pfd}, there exist polynomials $\{g_i(\x)\}_{i\in \divisors{\pe}}$ such that $\deg g_i< \deg h_i=\mult_{i}+1$ for any $i \in \divisors{\pe}$ and 
\[
\sum_{n=0}^{\infty}f(n)\x^n=\sum_{i\in \divisors{\pe}}\frac{g_i(\x)}{h_{i}(\x)}.
\]
By Lemma \ref{gene_quasi_p}, for any $i\in \divisors{\pe}$, there exists the quasi-polynomial $f^{(\mult_i)}_{i}(t)$ of degree $\mult_i$ with period $i$ such that $\sum_{n=0}^{\infty}f^{(\mult_i)}_{i}(n)\x^n=\frac{g_i(\x)}{h_{i}(\x)}$. Hence, 
\begin{equation}\label{above}
\sum_{n=0}^{\infty}f(n)\x^n=\sum_{i \in \divisors{\pe}}\sum_{n=0}^{\infty}f^{(\mult_i)}_{i}(n)\x^n.
\end{equation}
By comparing each term of (\ref{above}), we obtain the formula stated in (\ref{claim}).
\end{proof}
\subsection{Root system}\label{root system}
We now introduce some concepts that help to explain the results for the characteristic polynomial of the Linial arrangement given by Yoshinaga \cite{Yoshinaga_1}. Let $V=\mathbb{R}^{\degree}$ be the Euclidean space with inner product $(\cdot,\cdot)$. Let $\Phi \subset V$ be an irreducible root system with Coxeter number $\coxn$. Fix a positive system $\Phi^{+}\subset \Phi$ and the set of simple roots $\Delta=\{\alpha_1,\cdots,\alpha_{\degree}\} \subset \Phi^{+}$. The highest root, denoted by $\tilde{\alpha} \in \Phi^{+}$, can be expressed as the linear combination $\tilde{\alpha}=\sum_{i=1}^{\degree}\coxc_i \alpha_i$ $(\coxc_i \in \mathbb{Z}_{>0})$. We also set $\alpha_0:=-\tilde{\alpha}$ and $\coxc_0:=1$. Then, we have the linear relation
\begin{equation}
\coxc_0\alpha_0+ \coxc_1\alpha_1+\cdots+ \coxc_{\degree}\alpha_{\degree}=0.	
\end{equation}
The integers $\coxc_0,\cdots, \coxc_{\degree}$ have the following relation with the Coxeter number $h$:
\begin{proposition}\label{coxc_coxn_relation}(\cite{Humphreys})
\begin{equation}
\coxc_0+\coxc_1+\cdots+\coxc_{\degree}=\coxn.
\end{equation}
\end{proposition}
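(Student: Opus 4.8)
The plan is to reduce the identity to a single classical fact about the highest root and then invoke the structure theory of Weyl groups. Since $\coxc_0 = 1$ by definition, the asserted equality $\coxc_0 + \coxc_1 + \cdots + \coxc_{\degree} = \coxn$ is equivalent to
\[
\mathrm{ht}(\tilde{\alpha}) := \sum_{i=1}^{\degree}\coxc_i = \coxn - 1,
\]
i.e.\ the height of the highest root is $\coxn - 1$. So the entire content lies in this height formula, and everything else is the one-line computation $\sum_{i=0}^{\degree}\coxc_i = 1 + (\coxn-1) = \coxn$.

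To establish $\mathrm{ht}(\tilde{\alpha}) = \coxn - 1$, I would argue via a Coxeter element. Using that the Coxeter (= Dynkin) graph of $\Phi$ is a tree, bipartition $\Delta = \Delta' \sqcup \Delta''$ so that simple reflections within each part commute; set $w' := \prod_{\alpha \in \Delta'} s_{\alpha}$, $w'' := \prod_{\alpha \in \Delta''} s_{\alpha}$, and $w := w'w''$, a Coxeter element. The ingredients, all standard (Humphreys, \emph{Reflection Groups and Coxeter Groups}, \S 3.16--3.20, or Bourbaki Ch.~VI, or Kostant's original paper): (i) every Coxeter element of an irreducible Weyl group has order exactly $\coxn$; (ii) $w$ acts on $\Phi$ without fixed vectors, and in fact freely, with all orbits of size $\coxn$, so that $|\Phi| = \degree\,\coxn$; and (iii) there is a $2$-dimensional $w$-stable ``Coxeter plane'' $P \subset V$ on which $w$ acts as rotation by $2\pi/\coxn$, and tracking the positive roots through the cyclic order in which their hyperplanes meet $P$ (equivalently, running a dominant vector through the reduced word $w = w'w''$) shows that the heights occurring among positive roots are exactly $1, 2, \ldots, \coxn - 1$, with the value $\coxn - 1$ attained only by the unique dominant root $\tilde{\alpha}$. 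This gives $\mathrm{ht}(\tilde{\alpha}) = \coxn - 1$, hence the proposition.

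An alternative route I would present if a more combinatorial argument were preferred is via the distribution of root heights: letting $n_k$ be the number of positive roots of height $k$, Kostant's theorem identifies the partition $n_1 \geq n_2 \geq \cdots$ with the conjugate of the partition of exponents $m_1 \leq \cdots \leq m_{\degree}$ of $W(\Phi)$; since the largest exponent is $m_{\degree} = \coxn - 1$, the maximal root height is $\coxn - 1$, attained by $\tilde{\alpha}$, and one concludes as before.

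The main obstacle is precisely that the height-of-the-highest-root formula is \emph{not} elementary: both routes rest on genuinely nontrivial structure theory --- the order and free action of Coxeter elements together with the geometry of the Coxeter plane, or equivalently Kostant's theorem tying root heights to exponents. This is why it is natural here to cite \cite{Humphreys} (or Bourbaki, Ch.~VI) for the substantive step; once $\mathrm{ht}(\tilde{\alpha}) = \coxn - 1$ is in hand, the proposition follows by adding $\coxc_0 = 1$.
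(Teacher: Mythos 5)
Your proposal is correct: the identity is equivalent to the standard fact that the highest root has height $\coxn-1$, and the paper itself gives no argument beyond citing Humphreys for exactly this, so your reduction plus citation (with a sketch of the Coxeter-element/Kostant background behind it) is essentially the same approach as the paper.
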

\subsection{Ehrhart quasi-polynomial for the fundamental alcove}\label{section:Eh_quasi}
The coweight lattice $Z(\Phi)$ and the coroot lattice $Q(\Phi)$ are defined as follows.
\[
\begin{split}
Z(\Phi)&:=\Set{x\in V}{(\alpha_i,x) \in \mathbb{Z}, \alpha_i\in \Delta},\\
Q(\Phi)&:=\sum_{\alpha \in \Phi}\mathbb{Z}\cdot\frac{2\alpha}{(\alpha,\alpha)}.
\end{split}
\]
The index $\#\frac{Z(\Phi)}{Q(\Phi)}=f$ is called the index of connection. Let $\varpi_i \in Z(\Phi)$ be the dual basis for the simple roots $\alpha_1,\cdots,\alpha_{\degree}$, that is, $(\alpha_i,\varpi_j)=\delta_{ij}$. Then, $Z(\Phi)$ is a free abelian group generated by $\varpi_1,\cdots,\varpi_{\degree}$. We also have $c_i=(\varpi_i,\tilde{\alpha})$.
A connected component of $V\verb|\| \underset{{\underset{k\in\mathbb{Z}}{\alpha\in \Phi^{+}}}}{\cup}H_{\alpha,k}$ is called an alcove. Let us define the fundamental alcove $F_{\Phi}$ of type $\Phi$ as
\[
\begin{split}
F_{\Phi}:=\left\{
 x\in V\ \middle|
\begin{array}{ll}
&(\alpha_i,x)>0,\ (1\leqq i\leqq\degree)\\
&(\tilde{\alpha},x)<1
\end{array}
\right\}.
\end{split}
\]
The closure $\overline{F_{\Phi}}=\Set{x \in V}{(\alpha_i,x)\geqq0\ (1\leqq i\leqq\degree),\ 
(\tilde{\alpha},x)\leqq 1
}$ is the convex hull of $0,\frac{\varpi_1}{c_1},\cdots, \frac{\varpi_{\degree}}{c_{\degree}}\in V$. The closed fundamental alcove $\overline{F_{\Phi}}$ is a simplex. For a positive integer $q \in \mathbb{Z}_{>0}$, we define the function $\map{\Eh_{\Phi}}{\mathbb{Z}_{>0}}{\mathbb{Z}}$ as
\begin{equation}
\Eh_{\Phi}(q):=\#(q F_{\Phi}\cap Z(\Phi)).
\end{equation}
The function $\Eh_{\Phi}(q)$ can be extended as the function $\map{\Eh_{\Phi}}{\mathbb{Z}}{\mathbb{Z}}$
because $\Eh_{\Phi}(q)$ is a quasi-polynomial \cite{Beck-Robinson}. The quasi-polynomial $\Eh_{\Phi}(t)$ is called the Ehrhart quasi-polynomial for the fundamental alcove of type $\Phi$. Let $\Ehp$ be the minimal period of the quasi-polynomial $\Eh_{\Phi}(t)$. The quasi-polynomial $\Eh_{\Phi}(t)$ was computed for any irreducible root system $\Phi$ by Suter \cite{Suter}. In particular, for type $A_{\degree}$, the Ehrhart quasi-polynomial $\Eh_{A_{\degree}}(t)=\binom{t+\degree}{\degree}=\frac{(t+1)\cdots(t+\degree)}{\degree !}$. The Ehrhart quasi-polynomial $\Eh_{\Phi}(t)$ satisfies the following duality. 
\begin{theorem}(Suter \cite{Suter})
Let $\Phi$ be an irreducible root system of rank $\degree$.
If $q \in \mathbb{Z}$, then 
\begin{equation}
\Eh_{\Phi}(-q)=(-1)^{\degree}\Eh_{\Phi}(q-h).
\end{equation}
\end{theorem}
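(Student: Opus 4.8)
The plan is to deduce the duality from two classical facts: Ehrhart--Macdonald reciprocity applied to the rational simplex $\overline{F_\Phi}$, and an explicit lattice-point bijection given by translation. Throughout, $\Eh_\Phi(t)$ denotes the Ehrhart quasi-polynomial of the \emph{closed} $\degree$-dimensional rational simplex $\overline{F_\Phi}$ with respect to the lattice $Z(\Phi)$ (this is consistent with the formula $\Eh_{A_\degree}(t)=\binom{t+\degree}{\degree}$ recalled above), and $F_\Phi$ is its interior. Writing $\Eh_\Phi^{\circ}(t)$ for the Ehrhart quasi-polynomial of the open alcove $F_\Phi$, so that $\Eh_\Phi^{\circ}(q)=\#(qF_\Phi\cap Z(\Phi))$ for $q\geqq 1$, Ehrhart--Macdonald reciprocity \cite{Beck-Robinson} gives the identity of quasi-polynomials
\[
\Eh_\Phi(-t)=(-1)^{\degree}\,\Eh_\Phi^{\circ}(t).
\]
This accounts for the sign $(-1)^{\degree}$ and for the $-q$ on the left-hand side of the assertion; it remains to identify $\Eh_\Phi^{\circ}(t)$ with $\Eh_\Phi(t-\coxn)$.

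For that I would translate by $\varpi:=\varpi_1+\cdots+\varpi_{\degree}\in Z(\Phi)$. Since $(\alpha_i,\varpi_j)=\delta_{ij}$ we have $(\alpha_i,\varpi)=1$ for $1\leqq i\leqq\degree$, and from $\tilde\alpha=\sum_{i=1}^{\degree}\coxc_i\alpha_i$ together with $\coxc_0=1$ and Proposition \ref{coxc_coxn_relation}, $(\tilde\alpha,\varpi)=\sum_{i=1}^{\degree}\coxc_i=\coxn-1$. The elementary point is integrality: for $x\in Z(\Phi)$ every $(\alpha_i,x)$ lies in $\mathbb{Z}$, hence so does $(\tilde\alpha,x)=\sum_i\coxc_i(\alpha_i,x)$, so on $Z(\Phi)$ the strict conditions $(\alpha_i,x)>0$ and $(\tilde\alpha,x)<q$ defining $qF_\Phi$ are equivalent to $(\alpha_i,x)\geqq1$ and $(\tilde\alpha,x)\leqq q-1$. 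Granting this, one checks that $x\mapsto x-\varpi$ carries $qF_\Phi\cap Z(\Phi)$ into $(q-\coxn)\overline{F_\Phi}\cap Z(\Phi)$, because $(\alpha_i,x-\varpi)=(\alpha_i,x)-1\geqq0$ and $(\tilde\alpha,x-\varpi)=(\tilde\alpha,x)-(\coxn-1)\leqq(q-1)-(\coxn-1)=q-\coxn$; conversely $y\mapsto y+\varpi$ carries $(q-\coxn)\overline{F_\Phi}\cap Z(\Phi)$ into $qF_\Phi\cap Z(\Phi)$, because $(\alpha_i,y+\varpi)\geqq1>0$ and $(\tilde\alpha,y+\varpi)\leqq(q-\coxn)+(\coxn-1)=q-1<q$; and these two maps are mutually inverse. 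Hence $\Eh_\Phi^{\circ}(q)=\#\bigl((q-\coxn)\overline{F_\Phi}\cap Z(\Phi)\bigr)=\Eh_\Phi(q-\coxn)$ for all sufficiently large $q$, and therefore as quasi-polynomials.

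Substituting the second identity into the first gives $\Eh_\Phi(-t)=(-1)^{\degree}\Eh_\Phi(t-\coxn)$, which is the claim after renaming $t$ to $q$. Reciprocity is used here as a black box and the index bookkeeping is routine, so the step I expect to demand the most care is the translation bijection — specifically the integrality argument that upgrades the strict defining inequalities of $F_\Phi$ to weak ones, since it is precisely the extra $+1$ this produces, together with $(\tilde\alpha,\varpi)=\coxn-1$, that makes the shift equal to the full Coxeter number $\coxn$ rather than to $\coxn-1$; one also has to confirm that the bijection is genuinely defined for a cofinite set of $q$, which then suffices by quasi-polynomiality.
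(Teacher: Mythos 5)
Your argument is correct. The paper itself offers no proof of this statement (it is quoted from Suter \cite{Suter}), so the relevant comparison is with the standard sources rather than with an in-paper argument. Your two ingredients are sound: Ehrhart--Macdonald reciprocity for the rational simplex $\overline{F_\Phi}$ gives $\Eh_\Phi(-t)=(-1)^{\degree}\Eh_\Phi^{\circ}(t)$, and the translation by $\varpi=\varpi_1+\cdots+\varpi_\degree\in Z(\Phi)$, using $(\alpha_i,\varpi)=1$, $(\tilde\alpha,\varpi)=\coxn-1$ (Proposition \ref{coxc_coxn_relation}) and the integrality of $(\alpha_i,x)$ and hence of $(\tilde\alpha,x)=\sum_i\coxc_i(\alpha_i,x)$ on $Z(\Phi)$, correctly converts the strict inequalities into weak ones and yields $\Eh_\Phi^{\circ}(q)=\Eh_\Phi(q-\coxn)$ for $q\geqq\coxn$, which suffices by quasi-polynomiality. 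You were also right to pin down the closed-alcove normalization: the paper's displayed definition literally uses $F_\Phi$, but the $A_\degree$ formula, the value $\Eh_\Phi(0)=1$ forced by the generating function, and the vanishing $\Eh_\Phi(-1)=\cdots=\Eh_\Phi(-(\coxn-1))=0$ all show the closed alcove is intended, exactly as you assumed. For comparison, an even shorter route is available from facts already quoted in the paper: the generating function $\sum_{n\geqq0}\Eh_\Phi(n)x^n=\prod_{j=0}^{\degree}(1-x^{\coxc_j})^{-1}$ of Theorem \ref{gene_Eh} satisfies $F(1/x)=(-1)^{\degree+1}x^{\coxn}F(x)$ since $\sum_j\coxc_j=\coxn$, and Stanley's reciprocity for rational generating functions \cite{Stanley-EC1} then gives $\Eh_\Phi(-q)=(-1)^{\degree}\Eh_\Phi(q-\coxn)$ directly; your geometric proof buys a self-contained lattice-point explanation of the shift by $\coxn$ (via $\varpi$, essentially $\rho^\vee$), whereas the generating-function route is purely formal but leans on Theorem \ref{gene_Eh}(5), which is itself Suter's computation.
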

The following statements are true for the Ehrhart quasi-polynomial $\Eh_{\Phi}(t)$.
\begin{theorem}\label{gene_Eh}(Suter \cite{Suter})
\begin{enumerate}[(1)]
\item 	The Ehrhart quasi-polynomial $\Eh_{\Phi}(t)$ has the gcd-property.
\item The degree of $\Eh_{\Phi}(t)$ is the rank of $\Phi$
\item The minimal period $\Ehp$ is as given in Table \ref{fig:table_Ehpara}.
\item $\Eh_{\Phi}(-1)=\Eh_{\Phi}(-2)=\cdots= \Eh_{\Phi}(-(\coxn-1))=0$.
\item The generating function of $\Eh_{\Phi}(t)$ is 
\begin{equation}
\sum_{n=0}^{\infty}\Eh_{\Phi}(n)\x^{n}=\frac{1}{(1-\x^{\coxc_0})\cdots (1-\x^{\coxc_\degree})}.
\end{equation}
\end{enumerate}
\end{theorem}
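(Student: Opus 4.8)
The strategy is to obtain part (5) first, by a direct geometric computation, and then read off (2), (3), (4) from it using the rational-function machinery of \S\ref{sec:Pre_deco}; part (1) is the delicate point. For (5), note that since $\Delta$ is a basis of $V$, the map $x\mapsto((\alpha_1,x),\dots,(\alpha_{\degree},x))$ is a linear isomorphism $V\xrightarrow{\sim}\mathbb{R}^{\degree}$ which, by the very definition of $Z(\Phi)$, carries $Z(\Phi)$ onto $\mathbb{Z}^{\degree}$; and it carries the closed fundamental alcove (the convex hull of $0,\varpi_1/\coxc_1,\dots,\varpi_{\degree}/\coxc_{\degree}$) onto the simplex $\{y\in\mathbb{R}^{\degree}_{\geq 0}\mid \coxc_1 y_1+\dots+\coxc_{\degree}y_{\degree}\leq 1\}$, using $\coxc_i=(\varpi_i,\tilde\alpha)$ and $\tilde\alpha=\sum_{i\geq 1}\coxc_i\alpha_i$. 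Hence $\Eh_{\Phi}(q)=\#\{(m_1,\dots,m_{\degree})\in\mathbb{Z}^{\degree}_{\geq 0}\mid \coxc_1 m_1+\dots+\coxc_{\degree}m_{\degree}\leq q\}$. Introducing the slack variable $m_0=q-\sum_{i\geq 1}\coxc_i m_i\geq 0$ and using $\coxc_0=1$,
\[
\sum_{q\geq 0}\Eh_{\Phi}(q)\x^{q}=\Bigl(\sum_{m_0\geq 0}\x^{m_0}\Bigr)\prod_{i=1}^{\degree}\Bigl(\sum_{m_i\geq 0}\x^{\coxc_i m_i}\Bigr)=\frac{1}{(1-\x^{\coxc_0})(1-\x^{\coxc_1})\cdots(1-\x^{\coxc_{\degree}})},
\]
which is (5).

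Next, the right-hand side of (5) is already in lowest terms (its numerator is $1$), its unique pole of order $\degree+1$ is $\x=1$, every pole is a root of unity, and the orders of poles that occur are exactly the divisors of the $\coxc_j$'s, so the least common multiple of those orders is $\mathrm{lcm}(\coxc_0,\dots,\coxc_{\degree})=\mathrm{lcm}(\coxc_1,\dots,\coxc_{\degree})$. Lemma \ref{gene_quasi_p} then tells us at once that $\Eh_{\Phi}(t)$ is a quasi-polynomial of degree $\degree$ — i.e. the rank of $\Phi$, which is (2) — and that its minimal period is $\mathrm{lcm}(\coxc_1,\dots,\coxc_{\degree})$; substituting the marks $\coxc_i$ of each irreducible type gives the values in Table \ref{fig:table_Ehpara}, which is (3).

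For (4), write $R(\x)$ for the function in (5). By Proposition \ref{coxc_coxn_relation}, $\sum_j\coxc_j=\coxn$, so $R(\x^{-1})=\prod_j(1-\x^{-\coxc_j})^{-1}=\x^{\coxn}\prod_j(\x^{\coxc_j}-1)^{-1}=(-1)^{\degree+1}\x^{\coxn}R(\x)$. The reciprocity law for rational generating functions of quasi-polynomials gives $\sum_{n\geq 1}\Eh_{\Phi}(-n)\x^{n}=-R(\x^{-1})=(-1)^{\degree}\x^{\coxn}R(\x)=(-1)^{\degree}\sum_{m\geq 0}\Eh_{\Phi}(m)\x^{m+\coxn}$. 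Comparing coefficients of $\x^{n}$ for $1\leq n\leq\coxn-1$ yields $\Eh_{\Phi}(-n)=0$, which is (4) (and comparing for $n\geq\coxn$ reproves the duality $\Eh_{\Phi}(-n)=(-1)^{\degree}\Eh_{\Phi}(n-\coxn)$).

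The remaining part (1), the gcd-property, is where I expect the real work. Writing $R(\x)=\pm\prod_d\Phi_d(\x)^{-e_d}$ with $\Phi_d$ the $d$-th cyclotomic polynomial and $e_d=\#\{j\mid d\mid\coxc_j\}$, partial fractions (Lemma \ref{pfd}) together with Proposition \ref{gene_quasi_deco} give a decomposition $\Eh_{\Phi}=\sum_d\Lambda_d$, where $\Lambda_d$ is a rational-valued quasi-polynomial of period $d$ whose only poles are the primitive $d$-th roots of unity. Since $\gcd(r,d)=\gcd(\gcd(r,\Ehp),d)$ whenever $d\mid\Ehp$, the constituent of $\Eh_{\Phi}$ at residue $r$ mod $\Ehp$ is $\sum_d$ (constituent of $\Lambda_d$ at $r\bmod d$), so the gcd-property of $\Eh_{\Phi}$ reduces to the gcd-property of each $\Lambda_d$, i.e. to showing the constituent of $\Lambda_d$ at residue $r$ depends only on $\gcd(r,d)$. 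This last step is genuinely extra: it is not forced by the Galois-invariance of $\Lambda_d$ alone, and must use the special shape of the partial-fraction numerator of $\Lambda_d$ that comes from $R$ being a product of the very simple factors $(1-\x^{\coxc_j})^{-1}$ (equivalently, a symmetry of $\Lambda_d$ forced by the functional equation $R(\x^{-1})=(-1)^{\degree+1}\x^{\coxn}R(\x)$). I expect pinning this down to be the main obstacle; failing a clean uniform argument, it can be verified directly from Suter's explicit constituent formulas, type by type.
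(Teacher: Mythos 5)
The paper itself offers no proof of this theorem: it is imported wholesale from Suter \cite{Suter}, so there is nothing internal to compare against, and the question is whether your blind derivation actually establishes all five parts. Parts (5), (2), (3), (4) do go through essentially as you say: the coordinate change $x\mapsto((\alpha_1,x),\dots,(\alpha_{\degree},x))$ identifies $Z(\Phi)$ with $\mathbb{Z}^{\degree}$ and the closed alcove with the simplex $\{\sum_{i\geq1}\coxc_i y_i\leq 1,\ y\geq0\}$ (note you are, correctly, reading the definition as the \emph{closed} alcove, as the introduction and the formula $\Eh_{A_{\degree}}(t)=\binom{t+\degree}{\degree}$ require), the slack-variable count gives (5), Lemma \ref{gene_quasi_p} gives degree $\degree$ and, combined with the standard fact that the minimal period divides every period, gives minimal period $\mathrm{lcm}(\coxc_0,\dots,\coxc_{\degree})$, which matches Table \ref{fig:table_Ehpara} in every type, and Stanley's reciprocity for rational generating functions together with $\sum_j\coxc_j=\coxn$ gives (4) and the duality.

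The genuine gap is part (1), and you concede it yourself: your reduction of the gcd-property to the gcd-property of each summand $\Lambda_d$ is only a sufficient condition, and the key step — that the constituent of $\Lambda_d$ at residue $r$ depends only on $\gcd(r,d)$ — is never proved. Worse, the two routes you gesture at (the ``simple shape'' of the factors $(1-\x^{\coxc_j})^{-1}$, or the functional equation $R(\x^{-1})=(-1)^{\degree+1}\x^{\coxn}R(\x)$) cannot suffice on their own: the denumerant with weights $1,4$, i.e. $f(q)=\lfloor q/4\rfloor+1$ with generating function $\frac{1}{(1-\x)(1-\x^{4})}$, has exactly this product shape and satisfies the analogous functional equation $R(\x^{-1})=(-1)^{2}\x^{5}R(\x)$, yet its constituents at $q\equiv1$ and $q\equiv3 \bmod 4$ are $\frac{q+3}{4}$ and $\frac{q+1}{4}$, so the gcd-property fails. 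Hence part (1) genuinely uses finer properties of the multiset of marks of a root system (or Suter's explicit constituent formulas), and since the gcd-property is precisely the input consumed later by Proposition \ref{constituent_inv} and Theorem \ref{Ch_rad}, it cannot be left as a remark. As written, your proposal proves (2)--(5) but for (1) you must either cite Suter, as the paper does, or supply the missing uniform argument.
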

There is a relation between the Ehrhart quasi-polynomials of each type.
\begin{proposition}\label{root Ehrhart}
Let $\Phi$ be an irreducible root system of rank $\degree$.
The following formula holds.
\begin{equation}\label{cyclo_Ehrhart}
\Eh_{A_{\degree}}(t)=\C{\Sh}\Eh_{\Phi}(t).
\end{equation}
\end{proposition}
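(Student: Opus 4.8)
The plan is to reduce everything to an identity of ordinary generating functions. First I would observe that, since the operators $[\coxc_j]_{\Sh}$ commute and $[1]_{\Sh}$ is the identity, $\C{\Sh}$ is the same as $g(\Sh)$ for the polynomial
\[
g(x):=\C{x}=\prod_{j=0}^{\degree}\frac{1-x^{\coxc_j}}{1-x},
\]
which has degree $d:=\sum_{j=0}^{\degree}(\coxc_j-1)=\coxn-\degree-1$ by Proposition~\ref{coxc_coxn_relation}. By Theorem~\ref{gene_Eh}(5), $\sum_{n\ge 0}\Eh_\Phi(n)x^n=\prod_{j=0}^{\degree}(1-x^{\coxc_j})^{-1}$, and the same theorem applied to type $A_\degree$ (where every $\coxc_j=1$) gives $\sum_{n\ge 0}\Eh_{A_\degree}(n)x^n=(1-x)^{-(\degree+1)}$. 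So the proposition would follow once I know the transfer identity
\[
\sum_{n\ge 0}\bigl(g(\Sh)\Eh_\Phi\bigr)(n)x^n=g(x)\sum_{n\ge 0}\Eh_\Phi(n)x^n,
\]
because multiplying the generating function of $\Eh_\Phi$ by $g(x)$ cancels the denominator and leaves exactly $(1-x)^{-(\degree+1)}$, giving $\bigl(\C{\Sh}\Eh_\Phi\bigr)(n)=\Eh_{A_\degree}(n)$ for every $n\ge 0$.

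The heart of the argument is this transfer identity, and it is where the vanishing values of $\Eh_\Phi$ are used. Writing $g(x)=\sum_{k=0}^{d}\coef_kx^k$, the coefficient of $x^n$ on the right-hand side is $\sum_{k=0}^{\min(n,d)}\coef_k\Eh_\Phi(n-k)$, whereas $(g(\Sh)\Eh_\Phi)(n)=\sum_{k=0}^{d}\coef_k\Eh_\Phi(n-k)$; for $n\ge d$ these are literally equal, and for $0\le n<d$ the additional summands involve $\Eh_\Phi$ evaluated at integers lying in $\{n-d,\dots,-1\}\subseteq\{-(\coxn-1),\dots,-1\}$ (using $d=\coxn-\degree-1\le\coxn-1$), all of which vanish by Theorem~\ref{gene_Eh}(4). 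Hence the two power series coincide.

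Finally I would promote the equality $\bigl(\C{\Sh}\Eh_\Phi\bigr)(n)=\Eh_{A_\degree}(n)$ from $n\ge 0$ to all of $\mathbb{Z}$: the shift operator turns a quasi-polynomial into a quasi-polynomial, so $\C{\Sh}\Eh_\Phi$ is a quasi-polynomial whose every constituent is a polynomial agreeing with the polynomial $\Eh_{A_\degree}$ on an entire residue class of non-negative integers, hence on infinitely many points, hence identically; thus $\C{\Sh}\Eh_\Phi(t)=\Eh_{A_\degree}(t)$. (Alternatively, having identified the generating function of $\C{\Sh}\Eh_\Phi$ as $(1-x)^{-(\degree+1)}$, one can invoke Lemma~\ref{gene_poly} to conclude directly that it is the degree-$\degree$ polynomial $\Eh_{A_\degree}$.) I expect the only genuinely delicate point to be the bookkeeping in the transfer identity — keeping track of exactly which negative arguments of $\Eh_\Phi$ enter and verifying they are annihilated by Theorem~\ref{gene_Eh}(4); the rest is formal manipulation of explicit rational generating functions.
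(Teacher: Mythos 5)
Your proposal is correct and follows essentially the same route as the paper: both arguments use the generating function $\sum_{n\ge 0}\Eh_{\Phi}(n)x^{n}=\prod_{j}(1-x^{\coxc_j})^{-1}$ together with the vanishing $\Eh_{\Phi}(-1)=\cdots=\Eh_{\Phi}(-(\coxn-1))=0$ (and $\sum_j \coxc_j=\coxn$) to transfer multiplication by $\C{x}$ into the action of $\C{\Sh}$, and then identify the result with $(1-x)^{-(\degree+1)}=\sum_{n\ge 0}\Eh_{A_{\degree}}(n)x^{n}$. Your explicit bookkeeping of which negative arguments occur, and the final remark on extending the coefficientwise equality from $n\ge 0$ to all of $\mathbb{Z}$, only make explicit steps the paper leaves implicit.
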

\begin{proof}
The Ehrhart polynomial for the fundamental alcove of any irreducible root system $\Phi$ satisfies $\Eh_{\Phi}(-1)= \Eh_{\Phi}(-2)=\cdots=\Eh_{\Phi}(-(h-1))=0$. Hence, by Proposition \ref{coxc_coxn_relation},
\begin{equation}\label{Eh_coxc_shift}
[\coxc_0]_\x \cdots[\coxc_{\degree}]_\x \sum_{n=0}^{\infty}\Eh_{\Phi}(n)\x^n=\sum_{n=0}^{\infty}([\coxc_0]_{\Sh}\cdots[\coxc_{\degree}]_{\Sh}\Eh_{\Phi})(n)\x^n.
\end{equation}
On the left-hand side of (\ref{Eh_coxc_shift}), by Theorem \ref{gene_Eh}, we can write  
\[
\begin{split}
[\coxc_0]_\x \cdots[\coxc_{\degree}]_\x \sum_{n=0}^{\infty}\Eh_{\Phi}(n)\x^n&=[\coxc_0]_\x \cdots[\coxc_{\degree}]_\x \frac{1}{(1-\x^{\coxc_0})\cdots(1-\x^{\coxc_{\degree}})}\\
&=\frac{1}{(1-\x)^{\degree+1}}\\
&=\sum_{n=0}^{\infty}\Eh_{A_{\degree}}(n)\x^n.
\end{split}
\]
Therefore, by comparing each term of (\ref{Eh_coxc_shift}), we obtain the formula given in (\ref{cyclo_Ehrhart}).
\end{proof}
\begin{remark}
Note that $(1-\Sh)\Eh_{A_{\degree}}(t)= \Eh_{A_{\degree-1}}(t)$. We obtain the relations between the Ehrhart quasi-polynomials of root systems of different ranks from (\ref{cyclo_Ehrhart}) and Proposition \ref{Shift congruences}. The following are some examples.
\[
(1-\Sh^2)\Eh_{C_{\degree}}(t)=\Eh_{C_{\degree-1}}(t).
\]
\[
(1-\Sh^2)\Eh_{D_{\degree}}(t) =\Eh_{D_{\degree-1}}(t).
\]
\[
[3]_{\Sh}[4]_{\Sh}(1-\Sh)\Eh_{E_{7}}(t) =\Eh_{E_{6}}(t).
\]
\[
[2]_{\Sh^2}[5]_{\Sh}[6]_{\Sh}(1-\Sh)\Eh_{E_{8}}(t)=\Eh_{E_{7}}(t).
\]
\[
[2]_{\Sh}[4]_{\Sh}(1-\Sh)^2\Eh_{F_{4}}(t) =\Eh_{G_{2}}(t).
\]
\[
(1-\Sh)^2\Eh_{E_{6}}(t) =(1+\Sh^2)\Eh_{F_{4}}(t).
\]
\end{remark}
Let $\difi_0,\cdots,\difi_{\ndifi}$ be all the different integers in $\coxc_0,\cdots, \coxc_{\degree}$ and $\adegree_{\difi_k}+1$ be the number of multiples of $\difi_k$ in $\coxc_0,\cdots, \coxc_{\degree}$. Theorem \ref{gene_Eh} and Proposition \ref{gene_quasi_deco} lead to the following decomposition of the Ehrhart quasi-polynomial $\Eh_{\Phi}(t)$.
\begin{proposition}\label{Eh_deco}
For any irreducible root system $\Phi$, there exist quasi-polynomials $\{\Ehf{k}{(\adegree_k)}\}_{k \in \{\difi_0,\cdots, \difi_{\ndifi}\}}$ such that
\begin{equation}\label{Eh_sum}
\Eh_{\Phi}(t)=\sum_{k\in \{\difi_0,\cdots, \difi_{\ndifi}\}}\Ehf{k}{(\degree_{k})}(t),
\end{equation}	
where $\Ehf{k}{(\adegree_{k})}$ has period $k$ and degree $\adegree_{k}$.
\end{proposition}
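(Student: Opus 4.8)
The plan is to feed the generating function of $\Eh_{\Phi}(t)$ recorded in Theorem~\ref{gene_Eh}(5) into the partial-fraction decomposition of Proposition~\ref{gene_quasi_deco}. First I would put $g(\x):=1$ and $h(\x):=\prod_{j=0}^{\degree}(1-\x^{\coxc_j})$. Then $g$ and $h$ are relatively prime (the constant $1$ shares no root with $h$), $\deg g=0<\deg h=\coxc_0+\coxc_1+\cdots+\coxc_{\degree}=\coxn$ by Proposition~\ref{coxc_coxn_relation}, and every root of $h(\x)$ is a $\coxc_j$-th root of unity for some $j$, hence a $\pe$-th root of unity for $\pe:=\mathrm{lcm}(\coxc_0,\cdots,\coxc_{\degree})$. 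Since Theorem~\ref{gene_Eh}(5) gives $\sum_{n=0}^{\infty}\Eh_{\Phi}(n)\x^{n}=g(\x)/h(\x)$, Proposition~\ref{gene_quasi_deco} applies and yields, for each divisor $i$ of $\pe$, a quasi-polynomial $\Ehf{i}{(\mult_i)}(t)$ of period $i$ and degree $\mult_i$ with $\Eh_{\Phi}(t)=\sum_{i\mid\pe}\Ehf{i}{(\mult_i)}(t)$, where $\mult_i+1$ is the number of primitive $i$-th roots of unity among the roots of $h(\x)$.

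The second step is to compute $\mult_i$ and discard the trivial terms. A primitive $i$-th root of unity is a root of $1-\x^{\coxc_j}$ exactly when $i\mid\coxc_j$; hence if $i$ divides at least one $\coxc_j$, every primitive $i$-th root of unity is a root of $h(\x)$, each with multiplicity $\#\{\,j:i\mid\coxc_j\,\}$, so $\mult_i+1=\#\{\,j:i\mid\coxc_j\,\}$ equals the number of multiples of $i$ in $\coxc_0,\cdots,\coxc_{\degree}$. If instead $i$ divides no $\coxc_j$, then $\mult_i+1=0$ and $\Ehf{i}{(-1)}(t)$ is the zero quasi-polynomial, which we delete. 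This collapses the sum to one indexed by the divisors $i$ of $\pe$ that divide some $\coxc_j$, each contributing a quasi-polynomial of period $i$ and degree $\#\{\,j:i\mid\coxc_j\,\}-1$; these are exactly the period and degree prescribed for $\Ehf{\difi_k}{(\adegree_{\difi_k})}$ in the statement, provided the index sets coincide.

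The remaining, and only genuinely non-formal, step is to check that $\{\,i:i\mid\pe,\ i\mid\coxc_j\text{ for some }j\,\}=\{\difi_0,\cdots,\difi_{\ndifi}\}$. The inclusion $\supseteq$ is immediate, since each $\difi_k$ equals some $\coxc_j$ and therefore divides both itself and $\pe$. For $\subseteq$ I would appeal to the explicit highest-root coefficients of the irreducible root systems (see, e.g., \cite{Humphreys}): in every type the set of distinct values among $\coxc_0,\cdots,\coxc_{\degree}$ equals $\{1,2,\cdots,\max_j\coxc_j\}$, and is in particular closed under taking divisors, so every divisor of every $\coxc_j$ is itself one of the $\difi_k$. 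Once the index sets are identified, relabelling $i=\difi_k$ and setting $\Ehf{k}{(\adegree_k)}:=\Ehf{i}{(\mult_i)}$ yields (\ref{Eh_sum}) with $\Ehf{k}{(\adegree_k)}$ of period $k$ and degree $\adegree_k$. I expect this last bookkeeping about the marks to be the main point requiring care; everything else is a direct substitution into Theorem~\ref{gene_Eh} and Proposition~\ref{gene_quasi_deco}.
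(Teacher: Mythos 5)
Your proposal is correct and follows essentially the same route as the paper, which likewise obtains (\ref{Eh_sum}) by feeding the generating function of Theorem \ref{gene_Eh}(5) into the partial-fraction decomposition of Proposition \ref{gene_quasi_deco}. Your only additions are the explicit check that every divisor of a mark $\coxc_j$ is again one of the $\difi_k$ (so the remaining divisor indices contribute zero terms), a bookkeeping point the paper leaves implicit, and the reading of $\mult_i+1$ as the common multiplicity of the primitive $i$-th roots of unity in $h(\x)$, which is exactly how the paper itself uses that proposition.
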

\begin{remark}
We can express Proposition \ref{Eh_deco} in a different way from formula (\ref{Eh_sum}). First, note that a quasi-polynomial $f(t)$ of degree $\degree$ can be expressed with the periodic functions $\pf^{0}(t),\cdots,\pf^{\degree}(t)$ as follows.
\[
f(t)=\pf^{\degree}(t)t^{\degree}+\cdots+\pf^1(t)t+\pf^0(t).
\]
Let $\pf_i^{j}(t)$ be a periodic function with period $i$.
In the case of $E_6$, Proposition \ref{Eh_deco} can be expressed as follows.
\[
\begin{split}
\Ehf{1}{(6)}(t)&=\pf_1^6(t)t^6+ \pf_1^5(t)t^5+ \pf_1^4(t)t^4+ \pf_1^3(t)t^3+ \pf_1^2(t)t^2+ \pf_1^1(t)t+ \pf_1^0(t).\\
\Ehf{2}{(2)}(t)&=\pf_2^2(t)t^2+ \pf_2^1(t)t+ \pf_2^{0}(t).\\
\Ehf{3}{(0)}(t)&=\pf_3^{0}(t).
\end{split}
\]
\[
\begin{split}	
\Eh_{E_6}(t)&=\Ehf{1}{(6)}(t)+\Ehf{2}{(2)}(t)+\Ehf{3}{(0)}(t)\\
&=\pf_{1}^{6}(t)t^6+ \pf_1^{5}(t)t^5+\pf_1^{4}(t)t^4+\pf_1^{3}(t)t^3+\Bigl(\pf_1^{2}(t)+\pf_2^{2}(t)\Bigr)t^2\\
&\quad +\Bigl(\pf_1^{1}(t)+\pf_2^{1}(t)\Bigr)t+\Bigl(\pf_1^{0}(t)+\pf_2^{0}(t)+\pf_3^{0}(t)\Bigr).
\end{split}
\]
From this expression, we can see that the parts of degree $6,5,4$, and $3$ have the period $1$, the parts of degree $2$ and $1$ have the period $2$, and the parts of degree $0$ have the period $6$ since the period of the sum of periodic functions is the least common multiple of the period of each periodic function. Note that $\{\Ehf{k}{(\adegree_k)}\}_{k \in \{\difi_0,\cdots,\difi_{\ndifi}\}}$ are not unique, because it is sufficient for part of a periodic function of the quasi-polynomial $\Eh_{\Phi}(t)$ to be the sum of periodic functions.
\end{remark}

\begin{table}[htbp]
\centering
\caption{Table of root systems.}
{\footnotesize 
\begin{tabular}{c|l|l|l|c|c|c|c}
$\Phi$&$\coxc_0, \cdots, \coxc_\degree$&$\difi_0, \cdots, \difi_{\ndifi}$&$\adegree_{\difi_0}, \cdots, \adegree_{\difi_{\ndifi}} $&$\ndifi$&$\Ehp$&$\rad(\Ehp)$&$\coxn$\\
\hline\hline
$A_\degree$&$1,1,1,\dots,1$&$1$&$\degree$&$1$&$1$&$1$&$\degree+1$\\
$B_\degree, C_\degree$&$1,1,2,2,\dots,2$&$1,2$&$\degree,\degree-2$&$2$&$2$&$2$&$2\degree$\\
$D_\degree$&$1,1,1,1,2,\dots,2$&$1,2$&$\degree,\degree-4$&$2$&$2$&$2$&$2\degree-2$\\
$E_6$&$1,1,1,2,2,2,3$&$1,2,3$&$6,2,0$&$3$&$6$&$6$&$12$\\
$E_7$&$1,1,2,2,2,3,3,4$&$1,2,3,4$&$7,3,1,0$&$4$&$12$&$6$&$18$\\
$E_8$&$1,2,2,3,3,4,4,5,6$&$1,2,3,4,5,6$&$8,4,2,1,0,0$&$6$&$60$&$30$&$30$\\
$F_4$&$1,2,2,3,4$&$1,2,3,4$&$4,2,0,0$&$4$&$12$&$6$&$12$\\
$G_2$&$1,2,3$&$1,2,3$&$2,0,0$&$3$&$6$&$6$&$6$
\end{tabular}
}
\label{fig:table_Ehpara}
\end{table}
\newpage

\subsection{Eulerian polynomial}\label{section:generalized Eulerian}
We summarize some facts about the Eulerian polynomial and the generalized Eulerian polynomial with reference to \cite{Yoshinaga_1}.
\begin{definition}[Eulerian polynomial]
For a permutation $\tau \in \mathfrak{S}_{n} $, define
\[
\ascl(\tau):=\#\Set{i\in \{1,\cdots,n-1\}}{\tau(i)<\tau(i+1)}.
\]
Then,
\begin{equation}
\A(n,k):=\#\Set{\tau \in \mathfrak{S}_{n}}{\ascl(\tau)=k-1}
\end{equation}
$(1\leqq k\leqq n)$ is called the Eulerian number and the generating polynomial
\begin{equation}
\A_{n}(t):=\sum_{k=1}^{n}\A(n,k)t^k=\sum_{\tau \in \mathfrak{S}_{n}}t^{1+\ascl(\tau)}
\end{equation}
is called the Eulerian polynomial. Define $\A_0(t)=1$.
\end{definition}
The Eulerian polynomial $\A_{\degree}(t)$ satisfies the duality $\A_{\degree}(t)=t^{\degree+1}\A_{\degree}(\frac{1}{t})$. The following theorem is the so-called Worpitzky identity.
\begin{theorem}(Worpitzky \cite{Worpitzky})
Note that $\Eh_{A_{\degree}}(t)=\binom{t+\degree}{\degree}=\frac{(t+1)\cdots(t+\degree)}{\degree!}$. Then,
\begin{equation}\label{Worpitzky identity}
t^{\degree}=\A_{\degree}(\Sh)\Eh_{A_{\degree}}(t).
\end{equation}
\end{theorem}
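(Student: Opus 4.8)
The plan is to translate the claimed identity into an identity of ordinary generating functions, for which the two relevant power series are already at hand. On one side, the standard generating-function formula for the Eulerian polynomial reads
\[
\sum_{n=0}^{\infty} n^{\degree}\x^{n}=\frac{\A_{\degree}(\x)}{(1-\x)^{\degree+1}},
\]
valid for every $\degree\geqq 1$ and also for $\degree=0$ under the convention $0^{0}=1$; if one prefers not to quote it, it follows by induction on $\degree$ upon applying $\x\frac{d}{d\x}$ and using the classical recursion $\A_{\degree}(t)=t(1-t)\A_{\degree-1}'(t)+\degree\,t\,\A_{\degree-1}(t)$. On the other side, specializing Theorem \ref{gene_Eh}(5) to $\Phi=A_{\degree}$ (all $\coxc_{i}=1$) gives
\[
\sum_{n=0}^{\infty}\Eh_{A_{\degree}}(n)\x^{n}=\frac{1}{(1-\x)^{\degree+1}}.
\]
Multiplying the second display by $\A_{\degree}(\x)$ and comparing with the first yields $\sum_{n\geqq 0}n^{\degree}\x^{n}=\A_{\degree}(\x)\sum_{n\geqq 0}\Eh_{A_{\degree}}(n)\x^{n}$.

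Next I would identify the right-hand side with $\sum_{n\geqq 0}\bigl(\A_{\degree}(\Sh)\Eh_{A_{\degree}}\bigr)(n)\x^{n}$. Writing $\A_{\degree}(\x)=\sum_{k=1}^{\degree}\A(\degree,k)\x^{k}$ and expanding both products, the two power series $\A_{\degree}(\x)\sum_{n\geqq 0}\Eh_{A_{\degree}}(n)\x^{n}$ and $\sum_{m\geqq 0}\bigl(\A_{\degree}(\Sh)\Eh_{A_{\degree}}\bigr)(m)\x^{m}$ differ only by terms involving $\Eh_{A_{\degree}}(m-k)$ with $1\leqq k\leqq\degree$ and $0\leqq m\leqq k-1$, that is, $\Eh_{A_{\degree}}$ evaluated at integers in $\{-1,-2,\dots,-\degree\}$; all of these vanish by Theorem \ref{gene_Eh}(4), since $\coxn=\degree+1$ for $A_{\degree}$. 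Hence the two series coincide, and comparing coefficients of $\x^{n}$ gives $\bigl(\A_{\degree}(\Sh)\Eh_{A_{\degree}}\bigr)(n)=n^{\degree}$ for every integer $n\geqq 0$. Since both sides are polynomial functions of $n$, agreement at infinitely many integers forces the polynomial identity $t^{\degree}=\A_{\degree}(\Sh)\Eh_{A_{\degree}}(t)$.

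There is no genuine obstacle here beyond the bookkeeping in the second paragraph: the shift operator $\Sh$ introduces negative arguments, so the rule $\x^{k}\sum_{n\geqq 0}f(n)\x^{n}=\sum_{m\geqq 0}f(m-k)\x^{m}$ is legitimate only when $f(-1)=\dots=f(-k)=0$, and the $\degree$ vanishings recorded in Theorem \ref{gene_Eh}(4) are exactly what make this applicable for all $k\leqq\degree$. As a generating-function-free alternative, one can argue combinatorially: rewrite $\A_{\degree}(\Sh)\Eh_{A_{\degree}}(t)=\sum_{k=1}^{\degree}\A(\degree,k)\binom{t-k+\degree}{\degree}=\sum_{j=0}^{\degree-1}\A(\degree,\degree-j)\binom{t+j}{\degree}$, invoke the symmetry $\A(\degree,\degree-j)=\A(\degree,j+1)$ of the Eulerian numbers, and observe that for a positive integer $t$ the binomial $\binom{t+j}{\degree}$ counts the words $w\in\{1,\dots,t\}^{\degree}$ whose associated (stably sorted) permutation has exactly $j$ ascents; summing over the $\A(\degree,j+1)$ such permutations and then over $j$ accounts for all $t^{\degree}$ words, which gives the identity for all positive integers $t$, hence as polynomials in $t$.
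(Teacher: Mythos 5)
Your proof is correct. The paper itself gives no argument for this theorem --- it is quoted as a classical result of Worpitzky --- so there is no internal proof to compare against; but your generating-function route is sound and, in fact, mirrors the technique the paper does use for Proposition \ref{root Ehrhart}: there, too, multiplication of the generating function by a polynomial in $\x$ is traded for the action of a polynomial in $\Sh$, and the legitimacy of that trade rests precisely on the vanishing $\Eh_{\Phi}(-1)=\cdots=\Eh_{\Phi}(-(\coxn-1))=0$ of Theorem \ref{gene_Eh}(4). You correctly identified that this vanishing (with $\coxn=\degree+1$ for $A_{\degree}$, giving exactly the $\degree$ values $-1,\dots,-\degree$ needed for the shifts $\Sh^{k}$, $1\leqq k\leqq\degree$, occurring in $\A_{\degree}(\Sh)$) is the only delicate point, and your verification that $\sum_{n\geqq 0}n^{\degree}\x^{n}=\A_{\degree}(\x)/(1-\x)^{\degree+1}$ holds in the paper's normalization $\A_{\degree}(t)=\sum_{\tau}t^{1+\asc(\tau)}$ is also right. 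The concluding step --- two polynomials agreeing on all nonnegative integers are equal --- is fine. Your combinatorial alternative, via $\A_{\degree}(\Sh)\Eh_{A_{\degree}}(t)=\sum_{j=0}^{\degree-1}\A(\degree,j+1)\binom{t+j}{\degree}$ and the symmetry of the Eulerian numbers, is the classical proof of Worpitzky's identity and is equally acceptable; it buys independence from Theorem \ref{gene_Eh}, whereas the generating-function argument fits more naturally into the paper's framework.
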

The Eulerian polynomial also satisfies the following congruence.
\begin{theorem}\label{Eulerian}(\cite{Iijima-Sasaki-Takahashi-Yoshinaga}, \cite{Yoshinaga_1})
Let $\degree \geqq 1$, $n \geqq 2$. Then,
\begin{equation}
\A_{\degree}(t^n) \equiv \frac{1}{n^{\degree+1}}[n]_{t}^{\degree+1}\A_{\degree}(t)\  \bmod \ (1-t)^{\degree+1}.
\end{equation}
\end{theorem}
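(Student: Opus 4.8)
The plan is to deduce the congruence from the generating function of the Eulerian polynomials by means of a roots-of-unity filter, so that modulo $(1-t)^{\degree+1}$ only the ``constant'' Fourier component survives and reproduces the right-hand side.

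First I would record the identity
\[
\sum_{m=0}^{\infty}m^{\degree}t^{m}=\frac{\A_{\degree}(t)}{(1-t)^{\degree+1}}\qquad(\degree\geqq1),
\]
which follows from the Worpitzky identity (\ref{Worpitzky identity}). Indeed, evaluating $t^{\degree}=\A_{\degree}(\Sh)\binom{t+\degree}{\degree}$ at $t=m$ gives $m^{\degree}=\sum_{k}\A(\degree,k)\binom{m-k+\degree}{\degree}$, and since $\binom{j+\degree}{\degree}=0$ for $-\degree\leqq j\leqq -1$ (this is where $\degree\geqq1$ enters), extending the convolution to run over all $k\geqq0$ introduces only zero terms, so
\[
\sum_{m=0}^{\infty}m^{\degree}t^{m}=\Bigl(\sum_{k}\A(\degree,k)t^{k}\Bigr)\Bigl(\sum_{m=0}^{\infty}\binom{m+\degree}{\degree}t^{m}\Bigr)=\frac{\A_{\degree}(t)}{(1-t)^{\degree+1}}.
\]

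Next I would introduce $\omega:=\mathrm{e}^{\frac{2\pi\sqrt{-1}}{n}}$ and apply the roots-of-unity filter keeping the indices $m\equiv0\bmod n$:
\[
n^{\degree}\,\frac{\A_{\degree}(t^{n})}{(1-t^{n})^{\degree+1}}=\sum_{\substack{m\geqq0\\ n\mid m}}m^{\degree}t^{m}=\frac{1}{n}\sum_{k=0}^{n-1}\sum_{m=0}^{\infty}m^{\degree}(\omega^{k}t)^{m}=\frac{1}{n}\sum_{k=0}^{n-1}\frac{\A_{\degree}(\omega^{k}t)}{(1-\omega^{k}t)^{\degree+1}}.
\]
Multiplying through by $(1-t^{n})^{\degree+1}$ and using $(\omega^{k}t)^{n}=t^{n}$ to rewrite $\dfrac{1-t^{n}}{1-\omega^{k}t}=\dfrac{1-(\omega^{k}t)^{n}}{1-\omega^{k}t}=[n]_{\omega^{k}t}$, this becomes the polynomial identity
\[
n^{\degree+1}\A_{\degree}(t^{n})=\sum_{k=0}^{n-1}[n]_{\omega^{k}t}^{\degree+1}\,\A_{\degree}(\omega^{k}t).
\]

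Finally I would isolate the summand $k=0$, which is exactly $[n]_{t}^{\degree+1}\A_{\degree}(t)$, and observe that for $k\neq0$ the polynomial $[n]_{\omega^{k}t}=1+\omega^{k}t+\cdots+(\omega^{k}t)^{n-1}$ vanishes at $t=1$, its value there being $\tfrac{1-\omega^{kn}}{1-\omega^{k}}=0$ since $\omega^{k}\neq1$; hence $(1-t)$ divides $[n]_{\omega^{k}t}$, so $(1-t)^{\degree+1}$ divides $[n]_{\omega^{k}t}^{\degree+1}\A_{\degree}(\omega^{k}t)$. Reducing the displayed identity modulo $(1-t)^{\degree+1}$ and dividing by the integer $n^{\degree+1}$ yields $\A_{\degree}(t^{n})\equiv\frac{1}{n^{\degree+1}}[n]_{t}^{\degree+1}\A_{\degree}(t)\bmod(1-t)^{\degree+1}$, as desired. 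I expect the one delicate point to be the first step — correctly tracking which boundary terms of the convolution vanish, i.e.\ the use of $\binom{t+\degree}{\degree}(-1)=\cdots=\binom{t+\degree}{\degree}(-\degree)=0$ — after which the filter computation and the divisibility estimate are entirely routine.
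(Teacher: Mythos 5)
Your proof is correct, and it is worth noting that the paper itself offers no proof to compare against: Theorem \ref{Eulerian} is quoted from \cite{Iijima-Sasaki-Takahashi-Yoshinaga} and \cite{Yoshinaga_1}, so your argument stands as a self-contained justification of a cited result. Each step checks out: the generating function $\sum_{m\geqq 0}m^{\degree}t^{m}=\A_{\degree}(t)/(1-t)^{\degree+1}$ does follow from the Worpitzky identity (\ref{Worpitzky identity}) exactly as you say, because the extra convolution terms involve $\binom{m-k+\degree}{\degree}$ with $-\degree\leqq m-k\leqq -1$ and hence vanish (this is where $\degree\geqq 1$ is used, since $\A_{\degree}$ has no constant term in the paper's normalization); the roots-of-unity filter and the rewriting $\frac{1-t^{n}}{1-\omega^{k}t}=[n]_{\omega^{k}t}$ give the polynomial identity $n^{\degree+1}\A_{\degree}(t^{n})=\sum_{k=0}^{n-1}[n]_{\omega^{k}t}^{\degree+1}\A_{\degree}(\omega^{k}t)$ (one can verify it, e.g., for $\degree=1$, $n=2$); and for $k\neq 0$ the factor $[n]_{\omega^{k}t}$ vanishes at $t=1$, so those summands die modulo $(1-t)^{\degree+1}$ in $\mathbb{C}[t]$, and since both sides of the resulting congruence lie in $\mathbb{Q}[t]$, the divisibility descends to $\mathbb{Q}[t]$, which is the sense in which the statement with the factor $\frac{1}{n^{\degree+1}}$ is meant. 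Your method is in the same spirit as the machinery the paper does develop — Lemmas \ref{Athanasiadis's lemma_3/2} and \ref{Athanasiadis's lemma_2} likewise extract congruences modulo $(1-t)^{\degree+1}$ by evaluating at $n$-th roots of unity — whereas the cited sources obtain the congruence through the shift-operator/Worpitzky framework (comparing $\A_{\degree}(\Sh^{n})$ applied to $\Eh_{A_{\degree}}(t)$ with lattice-point counts); your generating-function filter is arguably the more elementary and direct route, at the modest cost of not illuminating the arrangement-theoretic meaning of the two sides.
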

Lam and Postnikov introduced the following generalization of Eulerian polynomials \cite{Lam-Postnikov}.
\begin{definition}[Generalized Eulerian polynomial]
Let $W$ be the Weyl group of an irreducible root system $\Phi$. For $\omega\in W$, the integer $\asc(\omega) \in \mathbb{Z}$ is defined by 
\[
\asc(\omega):=\sum_{\underset{\omega(\alpha_i)>0}{0\leqq i\leqq \degree}}c_i.
\]
Then,
\begin{equation}
\R_{\Phi}(t):=\frac{1}{f}\sum_{\omega \in W}t^{\asc(\omega)}	
\end{equation}
is called the generalized Eulerian polynomial of type $\Phi$.
\end{definition}
The generalized Eulerian polynomial $\R_{\Phi}(t)$ can be expressed in terms of the cyclotomic type polynomial $[c]_t$ and the Eulerian polynomial $\A_{\degree}(t)$.
\begin{theorem}\label{Lam-Postnikov}(Lam--Postnikov \cite{Lam-Postnikov}, Theorem 10.1)
Let $\Phi$ be an irreducible root system of rank $\ell$. Then, 
\begin{equation}
\R_{\Phi}(t)=[\coxc_0]_{t}[\coxc_1]_{t} \cdots [\coxc_{\degree}]_{t}\A_ {\degree}(t).
\end{equation}
\end{theorem}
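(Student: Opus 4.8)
The plan is to test the claimed factorization by applying each of its sides, viewed as a polynomial in the shift operator $\Sh$, to the Ehrhart quasi-polynomial $\Eh_{\Phi}(t)$, and then to upgrade the resulting operator identity to an identity of polynomials by a rigidity argument. Since $\C{\Sh}$ and $\A_{\degree}(\Sh)$ are polynomials in the single operator $\Sh$ they commute, so Proposition~\ref{root Ehrhart} together with the Worpitzky identity~(\ref{Worpitzky identity}) gives
\[
\bigl(\C{\Sh}\,\A_{\degree}(\Sh)\bigr)\Eh_{\Phi}(t)=\A_{\degree}(\Sh)\bigl(\C{\Sh}\Eh_{\Phi}(t)\bigr)=\A_{\degree}(\Sh)\Eh_{A_{\degree}}(t)=t^{\degree}.
\]
Hence, if we also know the ``generalized Worpitzky identity'' $\R_{\Phi}(\Sh)\Eh_{\Phi}(t)=t^{\degree}$, then the polynomial $g(t):=\R_{\Phi}(t)-\C{t}\A_{\degree}(t)$ satisfies $g(\Sh)\Eh_{\Phi}(t)=0$ as a function.

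To deduce $g\equiv 0$ from this, I would proceed as follows. Both $\R_{\Phi}(t)$ and $\C{t}\A_{\degree}(t)$ have degree at most $\coxn-1$: for the former, $\asc(\omega)=\sum_{\omega(\alpha_i)>0}\coxc_i\le\bigl(\sum_{i=0}^{\degree}\coxc_i\bigr)-1=\coxn-1$, because the relation $\sum_{i=0}^{\degree}\coxc_i\alpha_i=0$ forces $\omega$ to send at least one $\alpha_i$ to a negative root (and we use Proposition~\ref{coxc_coxn_relation}), while for the latter $\sum_{i=0}^{\degree}(\coxc_i-1)+\degree=\coxn-1$. So $\deg g\le\coxn-1$. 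By Theorem~\ref{gene_Eh}(4), $\Eh_{\Phi}(-1)=\cdots=\Eh_{\Phi}(-(\coxn-1))=0$, so applying a shift operator $g(\Sh)=\sum_{k}\coef_{k}\Sh^{k}$ with $k\le\coxn-1$ to $\Eh_{\Phi}$ produces no boundary terms in generating functions, and by Theorem~\ref{gene_Eh}(5),
\[
\sum_{\n=0}^{\infty}\bigl(g(\Sh)\Eh_{\Phi}\bigr)(\n)\x^{\n}=g(\x)\sum_{\n=0}^{\infty}\Eh_{\Phi}(\n)\x^{\n}=\frac{g(\x)}{(1-\x^{\coxc_{0}})\cdots(1-\x^{\coxc_{\degree}})}.
\]
Since $g(\Sh)\Eh_{\Phi}$ is the zero function the left-hand side vanishes, whence $g(\x)=0$, which is exactly Theorem~\ref{Lam-Postnikov}. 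Note that Proposition~\ref{cong} alone would only yield $\R_{\Phi}(t)\equiv\C{t}\A_{\degree}(t)\bmod(1-t)^{\degree+1}$, far too weak since both polynomials have degree $\coxn-1$; it is precisely the vanishing of $\Eh_{\Phi}$ at $-1,\dots,-(\coxn-1)$ that makes the argument rigid.

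Everything therefore reduces to the generalized Worpitzky identity $\R_{\Phi}(\Sh)\Eh_{\Phi}(t)=t^{\degree}$, equivalently $\frac{1}{f}\sum_{\omega\in W}\Eh_{\Phi}(q-\asc(\omega))=q^{\degree}$ for $q\in\mathbb{Z}_{>0}$ (both sides being quasi-polynomials in $q$, positive integers suffice); this is the genuine combinatorial core, and where I expect the main obstacle to lie. One clean route is to read it off from Yoshinaga's formula~(\ref{intro_ch}) at $\n=0$: the truncated arrangement $\mathcal{A}_{\Phi}^{[1,0]}$ is the empty arrangement in $\mathbb{R}^{\degree}$, so $\R_{\Phi}(\Sh)\Eh_{\Phi}(t)=\chi_{quasi}(\mathcal{A}_{\Phi}^{[1,0]},t)=t^{\degree}$. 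For a proof using only more elementary inputs, I would argue geometrically: $f q^{\degree}$ is the number of points of the coweight lattice $Z(\Phi)$ in the $q$-th dilate of a half-open fundamental parallelepiped for the coroot lattice $Q(\Phi)$, and that region is tiled by alcoves which, grouped according to the Weyl-group element $\omega$ recording their gradient direction, contribute exactly the summands $\Eh_{\Phi}(q-\asc(\omega))$, the terms with $\asc(\omega)>q$ being matched to negative-dilate counts via Ehrhart--Macdonald reciprocity together with Suter's duality $\Eh_{\Phi}(-q)=(-1)^{\degree}\Eh_{\Phi}(q-\coxn)$. Identifying these alcove families with dilated copies of the fundamental alcove, and carrying out the reciprocity bookkeeping when $\asc(\omega)>q$, is the step I would expect to require the most care.
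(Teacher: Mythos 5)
The paper itself does not prove this statement: it is quoted verbatim from Lam--Postnikov (Theorem 10.1), so there is no internal proof to compare with, and your proposal should be judged as a derivation from the other results the paper quotes. As such it is correct. The operator computation $\C{\Sh}\A_{\degree}(\Sh)\Eh_{\Phi}(t)=\A_{\degree}(\Sh)\Eh_{A_{\degree}}(t)=t^{\degree}$ is valid (Proposition \ref{root Ehrhart} plus the Worpitzky identity (\ref{Worpitzky identity})), your degree bound $\deg\R_{\Phi}\leqq\coxn-1$ via the ascent statistic is right, and the rigidity step is sound: since $\deg g\leqq\coxn-1$ and $\Eh_{\Phi}(-1)=\cdots=\Eh_{\Phi}(-(\coxn-1))=0$, the generating-function identity has no boundary terms, so $g(\x)/\prod_{i}(1-\x^{\coxc_i})=0$ forces $g=0$; this is exactly the device the paper uses in its proof of Proposition \ref{root Ehrhart}, and you correctly observe that Proposition \ref{cong} alone would only give a congruence mod $(1-t)^{\degree+1}$, which is too weak. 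The only caveat is one of logical economy rather than correctness: your essential input, the generalized Worpitzky identity $t^{\degree}=\R_{\Phi}(\Sh)\Eh_{\Phi}(t)$ (Theorem \ref{g-Eulerian}, i.e.\ Theorem \ref{characteristic_quasi_poly} at $\nn=0$), is itself a quoted theorem of Yoshinaga of comparable depth, so what you have really shown is that the Lam--Postnikov factorization follows from that identity together with elementary generating-function facts; the argument is non-circular only because Yoshinaga's proof of Theorem \ref{characteristic_quasi_poly} rests on the Worpitzky partition and not on the factorization, which cannot be verified from this paper alone and which you should state explicitly. Your alternative geometric route (tiling the dilated fundamental parallelepiped by alcoves and using Ehrhart reciprocity with Suter's duality) is essentially a sketch of that Worpitzky-partition argument and is not carried out in enough detail to stand on its own, so as written the proof is the conditional derivation, with the Yoshinaga identity as an external pillar; by contrast, Lam--Postnikov's original proof is an independent combinatorial argument, which is what the paper implicitly relies on by citation.
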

Some basic properties of the generalized Eulerian polynomial $\R_{\Phi}(t)$ follow from Theorem \ref{Lam-Postnikov} (Lam--Postnikov \cite{Lam-Postnikov}).
\begin{proposition}
\begin{enumerate}[(1)]
\item $\deg \R_{\Phi}=\coxn-1$.
\item $t^{\coxn}\R_{\Phi}(\frac{1}{t})=\R_{\Phi}(t)$.
\item $\R_{A_{\degree}}(t)=\A_{\degree}(t)$.
\end{enumerate}
\end{proposition}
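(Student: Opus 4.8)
The plan is to read off all three statements directly from Theorem~\ref{Lam-Postnikov}, which gives the factorization $\R_{\Phi}(t)=\cyc{\coxc_0}{t}\cyc{\coxc_1}{t}\cdots\cyc{\coxc_{\degree}}{t}\A_{\degree}(t)$, combined with Proposition~\ref{coxc_coxn_relation} and two elementary facts about the factors on the right. For part~(1) I would simply add degrees. Each factor $\cyc{\coxc_i}{t}=1+t+\cdots+t^{\coxc_i-1}$ has degree $\coxc_i-1$, and $\A_{\degree}(t)=\sum_{k=1}^{\degree}\A(\degree,k)t^k$ has degree $\degree$ (the top term $t^{\degree}$ comes from the identity permutation, whose ascent number is $\degree-1$). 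Since all factors have nonnegative coefficients there is no cancellation, so $\deg\R_{\Phi}=\sum_{i=0}^{\degree}(\coxc_i-1)+\degree=\bigl(\sum_{i=0}^{\degree}\coxc_i\bigr)-1$, which equals $\coxn-1$ by Proposition~\ref{coxc_coxn_relation}.

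For part~(2) I would use palindromicity of the factorization factor by factor. Each cyclotomic-type factor satisfies $t^{\coxc_i-1}\cyc{\coxc_i}{1/t}=\cyc{\coxc_i}{t}$, and the Eulerian polynomial satisfies the duality $\A_{\degree}(t)=t^{\degree+1}\A_{\degree}(1/t)$ recorded after the definition of $\A_{\degree}$. Substituting $1/t$ into Theorem~\ref{Lam-Postnikov} and clearing denominators, the total power of $t$ that accumulates is $\sum_{i=0}^{\degree}(\coxc_i-1)+(\degree+1)=\sum_{i=0}^{\degree}\coxc_i=\coxn$, again by Proposition~\ref{coxc_coxn_relation}; this yields $t^{\coxn}\R_{\Phi}(1/t)=\R_{\Phi}(t)$.

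Part~(3) is immediate: in type $A_{\degree}$ every mark $\coxc_i$ equals $1$ (Table~\ref{fig:table_Ehpara}), so each factor $\cyc{\coxc_i}{t}=\cyc{1}{t}=1$ and the product in Theorem~\ref{Lam-Postnikov} collapses to $\R_{A_{\degree}}(t)=\A_{\degree}(t)$. I do not anticipate a genuine obstacle; the whole proposition is bookkeeping once Theorem~\ref{Lam-Postnikov} is in hand. The only point requiring attention is the exponent count in part~(2), where one must keep the normalizations $\cyc{c}{t}=1+t+\cdots+t^{c-1}$ and $\A_{\degree}(t)=t^{\degree+1}\A_{\degree}(1/t)$ straight; it is worth remarking that it is the very same relation $\sum_i\coxc_i=\coxn$ that makes both (1) and (2) fall out cleanly.
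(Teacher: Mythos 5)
Your proposal is correct and follows the same route the paper intends: the paper states the proposition as an immediate consequence of Theorem~\ref{Lam-Postnikov} (the factorization $\R_{\Phi}(t)=[\coxc_0]_t\cdots[\coxc_{\degree}]_t\A_{\degree}(t)$) together with $\sum_i\coxc_i=\coxn$, giving no further details. Your degree count, the factor-by-factor palindromicity using $\A_{\degree}(t)=t^{\degree+1}\A_{\degree}(1/t)$, and the collapse of the product when all $\coxc_i=1$ in type $A_{\degree}$ are exactly the bookkeeping the paper leaves implicit.
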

We can obtain the following formula from Theorems \ref{Eulerian} and \ref{Lam-Postnikov}.
\begin{proposition}\label{g_Eulerian_cong}
Let $\Phi$ be an irreducible root system of rank $\degree$. Let $n$ be a positive integer. Then,
\begin{equation}
\R_{\Phi}(t^{n})\equiv (\prod_{i=0}^{\degree}\frac{1}{n}\cyc{n}{t^{\coxc_i}})\R_{\Phi}(t) \bmod (1-t)^{\degree+1}.
\end{equation}
\end{proposition}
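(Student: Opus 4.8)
The plan is to deduce the congruence from the two structural facts already available: the product factorization $\R_{\Phi}(t)=[\coxc_0]_t[\coxc_1]_t\cdots[\coxc_{\degree}]_t\A_{\degree}(t)$ (Theorem \ref{Lam-Postnikov}) and the Worpitzky-type congruence $\A_{\degree}(t^{n})\equiv \frac{1}{n^{\degree+1}}[n]_t^{\degree+1}\A_{\degree}(t)\bmod(1-t)^{\degree+1}$ (Theorem \ref{Eulerian}). If $n=1$ the claim is trivial, since $\frac1n[n]_{t^{\coxc_i}}=[1]_{t^{\coxc_i}}=1$ and both sides equal $\R_{\Phi}(t)$; so I would assume $n\geq 2$. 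Note also that $\degree=\operatorname{rank}(\Phi)\geq 1$ for any irreducible root system, so Theorem \ref{Eulerian} does apply.

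The key elementary observation is the identity of polynomials
\[
[\coxc_i]_{t^{n}}\,[n]_t \;=\; \frac{1-t^{n\coxc_i}}{1-t} \;=\; [\coxc_i]_t\,[n]_{t^{\coxc_i}},
\]
which is immediate from $[m]_s=\frac{1-s^{m}}{1-s}$ (both sides are $[n\coxc_i]_t$). With this in hand the computation runs as follows. First apply Theorem \ref{Lam-Postnikov} with $t$ replaced by $t^{n}$ to get $\R_{\Phi}(t^{n})=\bigl(\prod_{i=0}^{\degree}[\coxc_i]_{t^{n}}\bigr)\A_{\degree}(t^{n})$. Then multiply the congruence of Theorem \ref{Eulerian} by the polynomial $\prod_{i=0}^{\degree}[\coxc_i]_{t^{n}}$ — legitimate because $(1-t)^{\degree+1}\mid f-g$ implies $(1-t)^{\degree+1}\mid p(t)(f-g)$ — obtaining
\[
\R_{\Phi}(t^{n})\;\equiv\;\frac{1}{n^{\degree+1}}\Bigl(\prod_{i=0}^{\degree}[\coxc_i]_{t^{n}}\Bigr)[n]_t^{\degree+1}\A_{\degree}(t)\ \bmod\ (1-t)^{\degree+1}.
\]
Now distribute the $\degree+1$ copies of $[n]_t$ among the factors $[\coxc_i]_{t^{n}}$ and use the identity above to rewrite $\prod_{i=0}^{\degree}\bigl([\coxc_i]_{t^{n}}[n]_t\bigr)=\bigl(\prod_{i=0}^{\degree}[\coxc_i]_t\bigr)\bigl(\prod_{i=0}^{\degree}[n]_{t^{\coxc_i}}\bigr)$, and finally recognize $\bigl(\prod_{i=0}^{\degree}[\coxc_i]_t\bigr)\A_{\degree}(t)=\R_{\Phi}(t)$ by Theorem \ref{Lam-Postnikov} again. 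This yields $\R_{\Phi}(t^{n})\equiv\bigl(\prod_{i=0}^{\degree}\frac1n[n]_{t^{\coxc_i}}\bigr)\R_{\Phi}(t)\bmod(1-t)^{\degree+1}$, as desired.

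There is no genuine obstacle here; the proof is essentially bookkeeping. The only points requiring care are (i) doing the regrouping of the cyclotomic factors entirely inside $\mathbb{C}[t]$ before invoking the congruence, since congruences modulo $(1-t)^{\degree+1}$ are preserved under multiplication by polynomials but not by arbitrary rational functions, and (ii) separating off the trivial case $n=1$ so that the hypothesis $n\geq 2$ of Theorem \ref{Eulerian} is met. I would also remark, for the reader's benefit, that this proposition is exactly the statement obtained by substituting $t\mapsto \Sh$ and reading it off via Proposition \ref{cong}, which is how it will be used later.
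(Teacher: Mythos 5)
Your proposal is correct and follows essentially the same route as the paper: apply Theorem \ref{Lam-Postnikov} at $t^{n}$, multiply the congruence of Theorem \ref{Eulerian} by $\prod_{i}[\coxc_i]_{t^{n}}$, and regroup via $[\coxc_i]_{t^{n}}[n]_t=[\coxc_i n]_t=[\coxc_i]_t[n]_{t^{\coxc_i}}$ before recognizing $\R_{\Phi}(t)$. Your explicit treatment of the trivial case $n=1$ (where the paper silently relies on Theorem \ref{Eulerian} being stated for $n\geqq 2$) is a minor but welcome extra precision.
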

\begin{proof}
Using Theorems \ref{Eulerian} and \ref{Lam-Postnikov}, we calculate the following.
\[
\begin{split}
\R_{\Phi}(t^{n})&=[\coxc_0]_{t^{n}}[\coxc_1]_{t^{n}} \cdots [\coxc_{\degree}]_{t^{n}}\A_{\degree}(t^{n})\\
&\equiv [\coxc_0]_{t^{n}}[\coxc_1]_{t^{n}} \cdots [\coxc_{\degree}]_{t^{n}}(\frac{1}{n^{\degree+1}}[n]_{t}^{\degree+1}\A_{\degree}(t)) \bmod (1-t)^{\degree+1}\\
&\equiv \frac{1}{n^{\degree+1}}[\coxc_0 n]_{t}[\coxc_1 n]_{t} \cdots [\coxc_{\degree} n]_{t}\A_{\degree}(t) \bmod (1-t)^{\degree+1}\\
&\equiv \frac{1}{n^{\degree+1}}[n]_{t^{\coxc_0}} [n]_{t^{\coxc_1} }\cdots [n]_{t^{\coxc_{\degree}}}[\coxc_0]_{t}[\coxc_1]_{t} \cdots [\coxc_{\degree}]_{t}\A_{\degree}(t) \bmod (1-t)^{\degree+1}\\
&\equiv \frac{1}{n^{\degree+1}}[n]_{t^{\coxc_0}} [n]_{t^{\coxc_1} }\cdots [n]_{t^{\coxc_{\degree}}} \R_{\Phi}(t) \bmod (1-t)^{\degree+1}.\\
\end{split}
\]
\end{proof}
\subsection{Postnikov--Stanley Linial arrangement conjecture}\label{section:Conjecture}
Let $V$ be a vector space with the inner product $(\cdot,\cdot)$. For any integer $k\in \mathbb{Z}$ and $\alpha \in V$, the affine hyperplane $H_{\alpha,k}$ is defined by
\begin{equation}
H_{\alpha,k}:=\Set{x \in V}{(\alpha,x)=k}.\end{equation}
Let $a,b\in \mathbb{Z}$ be integers with $a\leqq b$. Define the hyperplane arrangement $\mathcal{A}_{\Phi}^{[a,b]}$ as follows.
\begin{equation}
\mathcal{A}_{\Phi}^{[a,b]}:=\Set{H_{\alpha,k}}{\alpha \in \Phi^{+}, k \in \mathbb{Z}, a\leqq k\leqq b}. 
\end{equation}
Note that we define $\mathcal{A}_{\Phi}^{[1,0]}$ as an empty set. The hyperplane arrangement $\mathcal{A}_{\Phi}^{[a,b]}$ is called the truncated affine Weyl arrangement. In particular, $\mathcal{A}_{\Phi}^{[1,\nn]}$ is called the Linial arrangement. Let us denote by $\chi(\mathcal{A}_{\Phi}^{[a,b]},t)$ the characteristic polynomial of $\mathcal{A}_{\Phi}^{[a,b]}$. Postnikov and Stanley conjectured the following for $\chi(\mathcal{A}_{\Phi}^{[a,b]},t)$.
\begin{conjecture}\label{Postnikov-Stanley_2}(Postnikov--Stanley \cite{Postnikov-Stanley}, Conjecture 9.14)
Let $a,b \in \mathbb{Z}$ with $a \leqq 1 \leqq b$. Suppose that $1 \leqq a+b$. Then, every root $z \in \mathbb{C}$ of the equation $\chi(\mathcal{A}_{\Phi}^{[a,b]},t)=0$ satisfies $\Re z=\frac{(b-a+1)\coxn}{2}$.
\end{conjecture}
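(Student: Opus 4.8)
\emph{Overall strategy.} The plan is to carry the argument of this paper up one level of generality, from the Linial band $[1,\nn]$ to an arbitrary band $[a,b]$, and to isolate the new input that such a step demands. Write $\nn':=b-a+1$ for the width of the band, so that the target real part is $\tfrac{\nn'\coxn}{2}$. The case $a=1$ is the Linial arrangement, already handled here (Theorems~\ref{gcd_prime} and~\ref{Ch_rad}, with the exceptional residues checked computationally), so the genuinely new range is $a\leq 0$. At the opposite extreme, when $a+b=1$ the band $[a,b]=[1-b,b]$ is the extended Shi arrangement, whose characteristic polynomial is the known $(t-b\coxn)^{\degree}$; every root then has real part $b\coxn=\tfrac{(b-a+1)\coxn}{2}$, as required. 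Hence it suffices to treat $a\leq 0$ with $a+b\geq 2$, and one may also restrict to $\Phi$ exceptional, the classical types being covered by Postnikov--Stanley and Athanasiadis.

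\emph{Step 1: a generalized Yoshinaga formula --- the crux.} The entry point is an identity generalizing Theorem~\ref{characteristic_quasi_poly}, namely
\[
\chi_{quasi}(\mathcal{A}_{\Phi}^{[a,b]},t)=g_{a,b}(\Sh)\,\Eh_{\Phi}(t)
\]
for an explicit polynomial $g_{a,b}$ specializing to $\R_{\Phi}(\Sh^{\nn+1})$ when $(a,b)=(1,\nn)$ and to $\Sh^{b\coxn}\R_{\Phi}(\Sh)$ when $a+b=1$. I expect such a formula from the Worpitzky-partition description of $\chi_{quasi}$ underlying \cite{Yoshinaga_1}: each element $\omega$ of the Weyl group should contribute a monomial in $\Sh$ recording how far $\omega$ carries each wall $(\alpha_i,\cdot)$ into the forbidden band $[a,b]$, and summing over $W$ should --- by an extension of the Lam--Postnikov factorization (Theorem~\ref{Lam-Postnikov}) --- yield a polynomial which, after substituting $\Sh^{\gcd(\nn',\Ehp)}$ for $\Sh$, is divisible by $\prod_{k}\cyc{\difi_k}{\Sh}^{\adegree_{\difi_k}+1}$, in accordance with the Ehrhart decomposition~(\ref{intro_Eh}) of Proposition~\ref{Eh_deco}. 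Establishing this cyclotomic divisibility, together with the factorization of $g_{a,b}$ through the pieces $\cyc{\coxc_j}{\Sh}$, is the technical heart of the generalization; it is here that a genuinely new idea --- the correct Worpitzky partition for an arbitrary band --- is needed, and nothing in the present paper supplies it.

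\emph{Step 2: run the decomposition--averaging machinery and induct on the width.} Granting Step~1, the remainder parallels \S\ref{section:main_formula}. Decompose $\Eh_{\Phi}=\sum_{k}\Ehf{k}{(\adegree_{k})}$ as in~(\ref{intro_Eh}); since $g_{a,b}(\Sh)$ carries the divisor $\cyc{\difi_k}{\Sh}^{\adegree_{\difi_k}+1}$ for each $k$, Proposition~\ref{averaging} replaces $g_{a,b}(\Sh)\Eh_{\Phi}(t)$ by $g_{a,b}(\overline{\Sh})\,\tilde{\Eh}^{\gcd(\nn',\Ehp)}_{\Phi}(t)$, turning the shift into an operator acting on a single constituent, hence on an honest polynomial. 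Exactly as for~(\ref{intro_}) and~(\ref{intro_rad}), this produces a product formula
\[
\chi_{quasi}(\mathcal{A}_{\Phi}^{[a,b]},t)=\Bigl(\prod_{j=0}^{\degree}\tfrac{1}{N}\cyc{N}{\Sh^{\coxc_j\cdot\gcd(\nn',\Ehp)}}\Bigr)\chi_{quasi}(\mathcal{A}_{\Phi}^{[a',b']},t),
\]
with $N=\nn'-(b'-a'+1)+1$ and $[a',b']$ a band of strictly smaller width still satisfying $1\leq a'+b'$ --- the hypothesis $1\leq a+b$ being exactly what forces the recursion to stay admissible and never degenerate to a single level $\leq 0$. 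When $\gcd(\nn',\Ehp)=1$ the right-hand factor is a polynomial, equal to $\chi(\mathcal{A}_{\Phi}^{[a',b']},t)$, whose roots lie on $\Re z=\tfrac{(b'-a'+1)\coxn}{2}$ by the inductive hypothesis (the base cases being the empty band, with $\chi=t^{\degree}$, and the extended Shi bands of the first paragraph); since $\sum_{j}\coxc_j=\coxn$ (Proposition~\ref{coxc_coxn_relation}), Postnikov--Stanley's Lemma~\ref{Postnikov-Stanley's lemma} shows each factor $\tfrac{1}{N}\cyc{N}{\Sh^{\coxc_j}}$ shifts the common real part by $\tfrac{(N-1)\coxc_j}{2}$, so the roots of $\chi(\mathcal{A}_{\Phi}^{[a,b]},t)$ acquire real part $\tfrac{(b'-a'+1)\coxn}{2}+\tfrac{(N-1)\coxn}{2}=\tfrac{(b-a+1)\coxn}{2}$, as desired.

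\emph{The remaining obstacle.} Two things stand between this plan and a complete proof. The first, stressed above, is Step~1: the explicit shape of $g_{a,b}$ and the verification of its cyclotomic divisibility go beyond \cite{Yoshinaga_1}. The second is the non-coprime case $\gcd(\nn',\Ehp)>1$: just as in the Linial setting, where the paper uses the radical formula~(\ref{intro_rad}) of Theorem~\ref{Ch_rad} to reduce to finitely many characteristic polynomials and then checks $E_6,E_7,E_8,F_4$ by computation in \S\ref{section:main_check}, the general band would require the analogous radical reduction together with a finite computational verification for $\Phi\in\{E_6,E_7,E_8,F_4,G_2\}$. Absent these inputs the argument is conditional; its skeleton, however --- boundary cases by the known Shi characteristic polynomial, the generic case by the decomposition-and-averaging machinery, and the real parts by Postnikov--Stanley's lemma plus induction on the width --- is essentially forced by the structure already set up in this paper.
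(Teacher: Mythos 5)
Your plan has a genuine gap, and it is precisely the step you yourself flag: the ``generalized Yoshinaga formula'' $\chi_{quasi}(\mathcal{A}_{\Phi}^{[a,b]},t)=g_{a,b}(\Sh)\Eh_{\Phi}(t)$, together with its cyclotomic divisibility and the width recursion built on it, is never established, so the argument is conditional and does not prove the statement. More importantly, this missing machinery is unnecessary: the reduction from a general admissible band to the Linial case is already available and is quoted in \S\ref{section:Conjecture} as equation (\ref{Ch_para_shift_2}), namely $\chi(\mathcal{A}_{\Phi}^{[1,\nn]},t)=\chi(\mathcal{A}_{\Phi}^{[1-k,\nn+k]},t+k\coxn)$. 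Given $a\leqq 1$ and $1\leqq a+b$, set $k=1-a\geqq 0$ and $\nn=a+b-1\geqq 0$, so that $[a,b]=[1-k,\nn+k]$; every root of $\chi(\mathcal{A}_{\Phi}^{[a,b]},t)$ is then a root of the Linial polynomial translated by $k\coxn$, and $\frac{\nn\coxn}{2}+k\coxn=\frac{(b-a+1)\coxn}{2}$, which is exactly the claimed real part. Thus the whole content of the conjecture for arbitrary $[a,b]$ is carried by the Linial case $[1,\nn]$, and no new Worpitzky partition for a band, no polynomial $g_{a,b}$, and no induction on the width is needed; your recursion producing an intermediate band $[a',b']$ with a factor $\frac{1}{N}[N]_{\Sh^{\coxc_j\gcd(\nn',\Ehp)}}$ is pure speculation even granting your Step~1, since nothing guarantees the required congruence between $g_{a,b}$ and $g_{a',b'}$ modulo $(1-t)^{\degree+1}$.

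For the Linial case itself your outline matches what the paper actually does (Theorem \ref{characteristic_quasi_poly}, the decomposition of Proposition \ref{Eh_deco}, the averaging of Proposition \ref{averaging}, Theorems \ref{corollary_1}, \ref{gcd_prime} and \ref{Ch_rad}, and Lemma \ref{Postnikov-Stanley's lemma}), but there too you defer the non-coprime case to an unperformed radical reduction plus computation; in the paper this is exactly the finite verification of the base polynomials $\chi(\mathcal{A}_{\Phi}^{[1,d-1]},t)$ for divisors $d$ of $\rad(\Ehp)$ carried out in \S\ref{section:main_check} for $E_6,E_7,E_8,F_4$ (classical types being due to Postnikov--Stanley and Athanasiadis, and $G_2$ being easy). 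So the correct skeleton is: Linial case by Theorem \ref{gcd_prime} when $\gcd(\nn+1,\Ehp)=1$, by Theorems \ref{corollary_1}/\ref{Ch_rad} plus the computational check otherwise, and then the one-line passage to arbitrary $[a,b]$ via (\ref{Ch_para_shift_2}); your proposal replaces that last, already-available step with an unproven construction, which is where it fails as a proof.
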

It is known that if Conjecture \ref{Postnikov-Stanley_2} is true in the case of the Linial arrangement $\mathcal{A}_{\Phi}^{[1,n]}$, then Conjecture \ref{Postnikov-Stanley_2} is also true by the following theorem.
\begin{theorem}(Yoshinaga \cite{Yoshinaga_1})
Let $n \geqq 0$ and $k\geqq 0$. The characteristic quasi-polynomial of the Linial arrangement $\mathcal{A}_{\Phi}^{[1,n]}$ is
\begin{equation}\label{Ch_para_shift_2}
\chi(\mathcal{A}_{\Phi}^{[1,n]},t)=\chi(\mathcal{A}_{\Phi}^{[1-k,n+k]},t+kh).
\end{equation} 	
\end{theorem}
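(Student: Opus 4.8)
The plan is to reduce (\ref{Ch_para_shift_2}) to a single ``one-step'' expansion identity, then to prove that identity by the finite-field (characteristic quasi-polynomial) method, isolating the one genuinely geometric input.

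\textbf{Reduction to one step.} First I would show that it suffices to prove, for all integers $a\leqq b+1$ (with the convention $\mathcal{A}_{\Phi}^{[1,0]}=\emptyset$ so that both sides are defined),
\[
\chi(\mathcal{A}_{\Phi}^{[a,b]},t)=\chi(\mathcal{A}_{\Phi}^{[a-1,b+1]},t+\coxn).
\]
Granting this, apply it in turn to $(a,b)=(1,\nn),(0,\nn+1),(-1,\nn+2),\dots,(2-k,\nn+k-1)$ and compose the substitutions $t\mapsto t+\coxn$; the intervals nest outward to $[1-k,\nn+k]$ and the shifts accumulate to $k\coxn$, which is exactly (\ref{Ch_para_shift_2}). (Already the case $\nn=0$, $k=1$ of the one-step identity gives $\chi(\mathcal{A}_{\Phi}^{[0,1]},t)=(t-\coxn)^{\degree}$, i.e.\ Headley's Shi-arrangement formula, so something nontrivial must enter somewhere.)

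\textbf{Passage to a lattice-point count.} Next I would use that, by Kamiya--Takemura--Terao \cite{Kamiya-Takemura-Terao_0} and \cite[Theorem~2.1]{Athanasiadis}, $\chi(\mathcal{A}_{\Phi}^{[a,b]},t)$ is the constituent, on residues coprime to the period, of the quasi-polynomial
\[
q\longmapsto\#\Set{\bar x\in Z(\Phi)/qZ(\Phi)}{(\alpha,\bar x)\not\equiv a,a+1,\dots,b \ \bmod q\ \text{ for all }\alpha\in\Phi^{+}},
\]
and the same for $[a-1,b+1]$. Since $\coxn$ is even in every type except $A_{\degree}$, where the period is $1$, there are infinitely many positive integers $q$ with $q$ and $q+\coxn$ both coprime to a common period of the two quasi-polynomials; for such $q$ it then suffices to prove that the first counting function evaluated at $q$ equals the second evaluated at $q+\coxn$, because two quasi-polynomials agreeing at infinitely many integers are equal, and therefore so are their constituents.

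\textbf{The counting identity (the hard part).} What remains, and what I expect to be the real obstacle, is a bijection
\[
\Set{\bar x\in Z(\Phi)/qZ(\Phi)}{(\alpha,\bar x)\not\equiv a,\dots,b}\ \longleftrightarrow\ \Set{\bar y\in Z(\Phi)/(q+\coxn)Z(\Phi)}{(\alpha,\bar y)\not\equiv a-1,\dots,b+1}.
\]
The intended mechanism is to ``peel off'' the two new hyperplane families $H_{\alpha,a-1}$ and $H_{\alpha,b+1}$ by recording, for a point of the larger torus, its location relative to the walls of a suitably translated dilation of the fundamental alcove; here the use of the coweight lattice $Z(\Phi)$ together with the relation $\coxc_0+\coxc_1+\dots+\coxc_{\degree}=\coxn$ (Proposition~\ref{coxc_coxn_relation}) is precisely what forces the torus index to change by $\coxn$ rather than some other number, while the vanishing $\Eh_{\Phi}(-1)=\dots=\Eh_{\Phi}(-(\coxn-1))=0$ (Theorem~\ref{gene_Eh}) and Suter's reciprocity $\Eh_{\Phi}(-q)=(-1)^{\degree}\Eh_{\Phi}(q-\coxn)$ are what make the two alcove counts match up. An alternative that avoids an explicit bijection would be to express both $\chi(\mathcal{A}_{\Phi}^{[a,b]},t)$ and $\chi(\mathcal{A}_{\Phi}^{[a-1,b+1]},t)$ as shifted operators $\R_{\Phi}(\Sh^{\bullet})$ acting on $\Eh_{\Phi}$, in the style of (\ref{intro_ch}), and to check the $\coxn$-shift against the same duality of $\Eh_{\Phi}$; but then the difficulty migrates into pinning down the correct operator presentation of $\chi(\mathcal{A}_{\Phi}^{[a,b]},t)$ for general $a\leqq b$.
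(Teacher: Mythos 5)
The reduction to a one-step identity and the passage to lattice-point counts are routine; the entire content of the theorem sits in the step you yourself label ``the hard part,'' and that step is never proved: no bijection between the two counting sets is constructed, and the ``peel off the two new hyperplane families'' mechanism is only described in a sentence. That this cannot be a formality is shown by the fact that your one-step claim as stated --- $\chi(\mathcal{A}_{\Phi}^{[a,b]},t)=\chi(\mathcal{A}_{\Phi}^{[a-1,b+1]},t+\coxn)$ for \emph{all} integers $a\leqq b+1$ --- is false. Take $\Phi=A_2$, so $\coxn=3$, and $(a,b)=(2,2)$: the arrangement $\mathcal{A}_{A_2}^{[2,2]}$ consists of three lines in general position, so $\chi(\mathcal{A}_{A_2}^{[2,2]},t)=t^2-3t+3$, while a finite-field count (or the paper's formula (\ref{Ch_Yoshinaga}) with $\nn=3$, giving $\Eh_{A_2}(t-4)+\Eh_{A_2}(t-8)$) yields $\chi(\mathcal{A}_{A_2}^{[1,3]},t)=t^2-9t+24$, hence $\chi(\mathcal{A}_{A_2}^{[1,3]},t+3)=t^2-3t+6\neq t^2-3t+3$. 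So any correct argument must make essential use of the position of the interval relative to the origin (the steps you actually need all have $a\leqq 1\leqq b+1$), and your sketched mechanism, which never invokes that hypothesis, would equally ``prove'' the false general statement. Note also that already the first step of your chain for $\nn=0$, namely $\chi(\mathcal{A}_{\Phi}^{[0,1]},t)=(t-\coxn)^{\degree}$, is a nontrivial theorem (the Shi arrangement formula), so nothing short of a genuine argument will close this gap.

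Two smaller points. The assertion ``two quasi-polynomials agreeing at infinitely many integers are equal'' is false (a period-two quasi-polynomial with constituents $0$ and $1$ agrees with $0$ at infinitely many integers); what saves your argument is that the two objects you compare are honest polynomials, so agreement at infinitely many admissible $q$ suffices --- and the existence of infinitely many $q$ with both $q$ and $q+\coxn$ coprime to the period follows from the fact that every prime dividing $\Ehp$ divides $\coxn$ (visible in Table \ref{fig:table_Ehpara}), not from parity of $\coxn$. Finally, be aware that the paper contains no proof of this statement: it is quoted from Yoshinaga, whose argument proceeds along your ``alternative'' route --- an operator presentation of the characteristic quasi-polynomial of $\mathcal{A}_{\Phi}^{[1-k,\nn+k]}$ in the style of (\ref{Ch_Yoshinaga}), combined with properties of $\Eh_{\Phi}$ such as Suter's duality --- so if you pursue that route, establishing the operator formula for the extended interval is precisely the work your proposal leaves open.
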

For classical root systems, the formula in (\ref{Ch_para_shift_2}) has been proved by Athanasiadis \cite{Athanasiadis_0, Athanasiadis}.\par
Conjecture \ref{Postnikov-Stanley_2} was proved by Postnikov and Stanley for $\Phi=A_{\degree}$ \cite{Postnikov-Stanley}, and by Athanasiadis for $\Phi= A_{\degree},B_{\degree},C_{\degree},D_{\degree}$ \cite{Athanasiadis}. Yoshinaga \cite{Yoshinaga_1} verified Conjecture \ref{Postnikov-Stanley_2} for $E_6,E_7,E_8,F_4$ when the parameter $n>0$ of the Linial arrangement $\mathcal{A}_{\Phi}^{[1,\nn]}$ satisfies
\begin{equation}\label{the parameter}
n \equiv -1 	\left\{
\begin{array}{ll}
\bmod \quad 6, &\Phi=E_6, E_7, F_4\\
\bmod \quad 30, &\Phi=E_8.
\end{array}\right.
\end{equation}
He also verified Conjecture \ref{Postnikov-Stanley_2} for exceptional root systems when the parameter $\nn$ is a sufficiently large integer \cite{Yoshinaga_2}. The case $\Phi=G_2$ is easy.\par
In proving the conjecture for the case in (\ref{the parameter}), Yoshinaga studied from the perspective of the characteristic quasi-polynomial \cite{Yoshinaga_1}, which was introduced by Kamiya et al.~\cite{Kamiya-Takemura-Terao_0}. One of the most important properties of the characteristic quasi-polynomial is that it coincides with the characteristic polynomial on the integers that are relatively prime to its own period as a quasi-polynomial. Let us denote by $\chi_{quasi}(\mathcal{A}_{\Phi}^{[1,\nn]},t)$ the characteristic quasi-polynomial of $\mathcal{A}_{\Phi}^{[1,\nn]}$. Yoshinaga proved the explicit formula for $\chi_{quasi}(\mathcal{A}_{\Phi}^{[1,\nn]},t)$.
\begin{theorem}\label{characteristic_quasi_poly}(Yoshinaga \cite{Yoshinaga_1})
Let $\nn \geqq 0$. The characteristic quasi-polynomial of the Linial arrangement $\mathcal{A}_{\Phi}^{[1,\nn]}$ is
\begin{equation}\label{Ch_Yoshinaga}
\chi_{quasi}(\mathcal{A}_{\Phi}^{[1,\nn]},t)=\R_{\Phi}(\Sh^{\nn+1})\Eh_{\Phi}(t).
\end{equation}
\end{theorem}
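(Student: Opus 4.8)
The plan is to evaluate the left-hand side through the finite-torus description of the characteristic quasi-polynomial due to Kamiya--Takemura--Terao \cite{Kamiya-Takemura-Terao_0}, and to match the result with the right-hand side by means of a Worpitzky-type partition of the torus $V/Z(\Phi)$ into alcoves indexed by the Weyl group $W$. Taking the coweight lattice $Z(\Phi)$ as the $\mathbb{Z}$-structure on $V$ (so that every $H_{\alpha,k}$ with $\alpha\in\Phi^{+}$, $k\in\mathbb{Z}$ is defined over $Z(\Phi)$), the characteristic quasi-polynomial is by definition the quasi-polynomial
\[
q\ \longmapsto\ \#\bigl\{\,x\in Z(\Phi)/qZ(\Phi)\ \bigm|\ (\alpha,x)\not\equiv k \bmod q\ \text{ for all }\alpha\in\Phi^{+},\ 1\leqq k\leqq \nn\,\bigr\}.
\]
Writing $\xi:=\tfrac{1}{q}x$ for a representative in $V$, counting such $x$ is the same as counting the $q$-torsion points $\xi$ of $V/Z(\Phi)$ for which $(\alpha,\xi)\bmod 1\notin\bigl(0,\tfrac{\nn+1}{q}\bigr)$ for every $\alpha\in\Phi^{+}$; equivalently, $\xi$ must avoid every open ``thickened wall'' $\{\,k<(\alpha,\cdot)<k+\tfrac{\nn+1}{q}\,\}$, $k\in\mathbb{Z}$.

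The geometric core of the argument is the construction of the Worpitzky partition. The hyperplanes $H_{\alpha,k}$ cut $V$ into alcoves, each $\widehat{W}$-congruent to $F_{\Phi}$ for the affine Weyl group $\widehat{W}=W\ltimes Q(\Phi)$; modulo $Z(\Phi)$ these fall into $|W|/f$ classes, where $f=\#(Z(\Phi)/Q(\Phi))$ is the index of connection, and one can pick half-open alcove representatives whose disjoint union is a fundamental domain for $Z(\Phi)$ on $V$. Each half-open alcove carries a label $\omega\in W$ recording which of the walls $(\alpha_{i},\cdot)=0$ $(1\leqq i\leqq\degree)$ and $(\tilde{\alpha},\cdot)=1$ of $F_{\Phi}$ lie on its ``lower'' side, i.e.\ the walls across which some coordinate $(\alpha,\cdot)$ increases. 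The key local fact I would establish is that, under the affine isomorphism carrying the alcove-class of $\omega$ onto $\overline{F_{\Phi}}$, a $q$-torsion point of that class avoids all the thickened walls precisely when it lies in the dilate $\bigl(q-(\nn+1)\,\asc(\omega)\bigr)\overline{F_{\Phi}}$: each wall counted by $\asc(\omega)=\sum_{i:\,\omega(\alpha_{i})>0}\coxc_{i}$ is one whose crossing drives some coordinate through a forbidden block of length $\nn$, with multiplicity the mark $\coxc_{i}$, hence it costs exactly $\nn+1$ units of room. By definition the number of lattice points of that dilate is $\Eh_{\Phi}\bigl(q-(\nn+1)\,\asc(\omega)\bigr)$.

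Summing over the $|W|/f$ alcove classes reproduces $\tfrac1f\sum_{\omega\in W}$ applied to $\Eh_{\Phi}$ (this is precisely where the factor $f$ in the definition of $\R_{\Phi}$ enters, and it is consistent with the Lam--Postnikov fact that $\tfrac1f\sum_{\omega\in W}t^{\asc(\omega)}$ has integer coefficients, because $\asc$ is compatible with the $f$-fold symmetry identifying the classes); one thus obtains
\[
\chi_{quasi}(\mathcal{A}_{\Phi}^{[1,\nn]},q)=\frac{1}{f}\sum_{\omega\in W}\Eh_{\Phi}\bigl(q-(\nn+1)\,\asc(\omega)\bigr)=\Bigl(\frac{1}{f}\sum_{\omega\in W}\Sh^{(\nn+1)\,\asc(\omega)}\Bigr)\Eh_{\Phi}(q)=\R_{\Phi}(\Sh^{\nn+1})\Eh_{\Phi}(q),
\]
which is the asserted identity. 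As a sanity check, for $\nn=0$ the arrangement $\mathcal{A}_{\Phi}^{[1,0]}$ is empty with $\chi=t^{\degree}$, and indeed $\R_{\Phi}(\Sh)\Eh_{\Phi}(t)=t^{\degree}$: since $\Eh_{\Phi}(-1)=\dots=\Eh_{\Phi}(-(\coxn-1))=0$ and $\deg\R_{\Phi}=\coxn-1$ (Theorem \ref{gene_Eh}) one may pass to generating functions, and Theorems \ref{Lam-Postnikov} and \ref{gene_Eh} give $\R_{\Phi}(\x)\sum_{n=0}^{\infty}\Eh_{\Phi}(n)\x^{n}=\A_{\degree}(\x)\prod_{i=0}^{\degree}\tfrac{\cyc{\coxc_{i}}{\x}}{1-\x^{\coxc_{i}}}=\tfrac{\A_{\degree}(\x)}{(1-\x)^{\degree+1}}=\sum_{n=0}^{\infty}n^{\degree}\x^{n}$ by the Worpitzky identity (\ref{Worpitzky identity}).

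I expect the main obstacle to be exactly the second step: setting up half-open alcoves that genuinely tile $V/Z(\Phi)$, matching their combinatorial labels with $W$, proving that the thickened-wall conditions are equivalent to membership in a dilate of $\overline{F_{\Phi}}$ with dilation factor precisely $q-(\nn+1)\,\asc(\omega)$, and correctly bookkeeping the index of connection $f$. The case $\nn=0$ of this is the generalized Worpitzky identity for $\Phi$, and the general case is its thickened-wall refinement.
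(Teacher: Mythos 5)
The paper does not actually prove Theorem \ref{characteristic_quasi_poly}: it is imported from Yoshinaga \cite{Yoshinaga_1}, and your outline is precisely the strategy of that source --- the Kamiya--Takemura--Terao finite-torus count, a Worpitzky partition of a fundamental domain for $Z(\Phi)$ into half-open alcoves labelled by Weyl-group data, and the identification of the surviving $q$-torsion points in each alcove class with $Z(\Phi)$-points of a dilated closed fundamental alcove, summed with weight $\frac{1}{f}$. So the route is the right one; the question is whether you have a proof, and you do not yet.

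The gap is exactly where you flag it, and it is not a technicality: the claim that a $q$-torsion point in the class labelled by $\omega$ avoids all thickened walls if and only if it lies in $\bigl(q-(\nn+1)\asc(\omega)\bigr)\overline{F_{\Phi}}$ is the entire mathematical content of the theorem, and nothing in your write-up establishes it. To close it you must (i) construct the half-open alcoves so that they genuinely tile a fundamental domain for $Z(\Phi)$, with the choice of which faces to include governed by the ascent/descent data --- this is what makes the \emph{closed}-alcove Ehrhart quasi-polynomial $\Eh_{\Phi}$ appear rather than an open or mixed count; (ii) prove the asserted bijection between surviving torsion points and lattice points of the dilate, where the marks $\coxc_i$ enter through the affine coordinates $(\alpha_i,\cdot)$ and $(\tilde{\alpha},\cdot)$ of the alcove; your phrase that each ascent wall ``costs exactly $\nn+1$ units of room'' is a heuristic, not an argument; and (iii) justify that the $f$ elements of $W$ identified under the extended affine Weyl group symmetry all have the same value of $\asc$, so that summing over the $|W|/f$ classes really equals $\frac{1}{f}\sum_{\omega\in W}$. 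Your $\nn=0$ sanity check is correct, but it only re-verifies the generalized Worpitzky identity (Theorem \ref{g-Eulerian}), not the thickened-wall refinement that the theorem asserts; as it stands the proposal is a faithful plan of Yoshinaga's proof with its central lemma still unproved.
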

From (\ref{Ch_Yoshinaga}), we see that $\chi_{quasi}(\mathcal{A}_{\Phi}^{[1,\nn]},t) $ has the same period as $\Eh_{\Phi}(t)$, namely, the period $\Ehp$. Note that $\chi_{quasi}(\mathcal{A}_{\Phi}^{[1,\nn]},t)= \chi(\mathcal{A}_{\Phi}^{[1,\nn]},t)$ when $t \equiv 1 \bmod \Ehp$. We will calculate the left-hand side of (\ref{Ch_Yoshinaga}) in the following section.\par
When $\nn=0$, that is, $\mathcal{A}_{\Phi}^{[1,0]}=\emptyset$, Theorem \ref{characteristic_quasi_poly} leads to the following generalization of the Worpitzky identity (\ref{Worpitzky identity}) \cite{Yoshinaga_1,Yoshinaga_2}.
\begin{theorem}\label{g-Eulerian}(Yoshinaga \cite{Yoshinaga_1})
\begin{equation}
t^{\degree}=\R_{\Phi}(\Sh)\Eh_{\Phi}(t).
\end{equation}
\end{theorem}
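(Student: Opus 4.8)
The plan is to read off the identity as the $\nn=0$ case of Theorem~\ref{characteristic_quasi_poly}. When $\nn=0$ the truncated arrangement $\mathcal{A}_{\Phi}^{[1,0]}$ is empty, so its intersection lattice is the single point $V$ and its characteristic quasi-polynomial is $t^{\degree}$: the characteristic quasi-polynomial counts residue vectors in $(\mathbb{Z}/q\mathbb{Z})^{\degree}$ avoiding the hyperplanes mod $q$, and with no hyperplanes this count is the honest polynomial $q^{\degree}$, of period $1$. Substituting $\nn=0$ into (\ref{Ch_Yoshinaga}) and using $\Sh^{\nn+1}=\Sh^{1}=\Sh$ gives
\[
t^{\degree}=\chi_{quasi}(\mathcal{A}_{\Phi}^{[1,0]},t)=\R_{\Phi}(\Sh^{\nn+1})\Eh_{\Phi}(t)=\R_{\Phi}(\Sh)\Eh_{\Phi}(t),
\]
which is exactly the claim.

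If one instead wants a proof independent of the machinery behind Theorem~\ref{characteristic_quasi_poly}, the same identity drops out of the Worpitzky identity together with the type-$A$ reduction of the Ehrhart quasi-polynomial. Concretely, (\ref{Worpitzky identity}) gives $t^{\degree}=\A_{\degree}(\Sh)\Eh_{A_{\degree}}(t)$; Proposition~\ref{root Ehrhart} rewrites $\Eh_{A_{\degree}}(t)=\C{\Sh}\Eh_{\Phi}(t)$; substituting, $t^{\degree}=\A_{\degree}(\Sh)\C{\Sh}\Eh_{\Phi}(t)$. Since every operator appearing is a polynomial in the single operator $\Sh$, the factors commute, and by Theorem~\ref{Lam-Postnikov} the product $\C{\Sh}\A_{\degree}(\Sh)$ equals $\R_{\Phi}(\Sh)$; hence $t^{\degree}=\R_{\Phi}(\Sh)\Eh_{\Phi}(t)$.

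I do not expect any genuine obstacle: in either form the argument is a one-line substitution into results already established above. The only points deserving a sentence of justification are that $\chi_{quasi}(\emptyset,t)=t^{\degree}$, which is immediate from the definition of the characteristic quasi-polynomial of Kamiya--Takemura--Terao \cite{Kamiya-Takemura-Terao_0}; that $\Eh_{\Phi}$ may be handled as a bona fide quasi-polynomial on $\mathbb{Z}$, as guaranteed by Theorem~\ref{gene_Eh}; and that the purely algebraic identity $\R_{\Phi}(t)=\C{t}\A_{\degree}(t)$ of Theorem~\ref{Lam-Postnikov} transports verbatim to the shift operator, because $g(t)\mapsto g(\Sh)$ is a $\mathbb{C}$-algebra homomorphism into the operators acting on polynomials of degree $\leqq\degree$.
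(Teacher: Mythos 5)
Your first argument is exactly the paper's: Theorem \ref{g-Eulerian} is presented there as the $\nn=0$ specialization of Theorem \ref{characteristic_quasi_poly}, using $\mathcal{A}_{\Phi}^{[1,0]}=\emptyset$ and $\chi_{quasi}(\mathcal{A}_{\Phi}^{[1,0]},t)=t^{\degree}$, with the statement attributed to Yoshinaga. Your alternative route via the Worpitzky identity, Proposition \ref{root Ehrhart}, and Theorem \ref{Lam-Postnikov} is also valid and not circular in this paper, since Proposition \ref{root Ehrhart} is proved by generating functions (the remark after Corollary \ref{gene_worpitzky_pol} merely runs that implication in the reverse direction); the only wording to adjust is that $g(t)\mapsto g(\Sh)$ should be described as an algebra homomorphism into operators on functions $\mathbb{Z}\to\mathbb{C}$, since $\Eh_{\Phi}(t)$ is a quasi-polynomial rather than a polynomial of degree $\leqq\degree$.
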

\section{Main results}
\subsection{Postnikov--Stanley Linial arrangement conjecture when the parameter $\nn+1$ is relatively prime to the period}\label{section:main_formula}
\begin{theorem}\label{main theorem}
Let $n\geqq0$.
\begin{equation}
\chi _{quasi}(\mathcal{A}_{\Phi}^{[1,\nn]},t)=\R_{\Phi}(\overline
{\Sh}^{\nn+1})\avEh{\Phi}{\mathrm{gcd}(\nn+1,\Ehp)}(t).
\end{equation}
In particular, $\chi _{quasi}(\mathcal{A}_{\Phi}^{[1,\nn]},t)$ has the period $\gcd(\nn+1,\Ehp)$.
\end{theorem}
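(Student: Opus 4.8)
The plan is to start from Yoshinaga's identity $\chi_{quasi}(\mathcal{A}_{\Phi}^{[1,\nn]},t)=\R_{\Phi}(\Sh^{\nn+1})\Eh_{\Phi}(t)$ (Theorem~\ref{characteristic_quasi_poly}) and to process the right-hand side summand by summand along the decomposition $\Eh_{\Phi}(t)=\sum_{k\in\{\difi_0,\dots,\difi_{\ndifi}\}}\Ehf{k}{(\adegree_{k})}(t)$ of Proposition~\ref{Eh_deco}. So fix $k\in\{\difi_0,\dots,\difi_{\ndifi}\}$ and write $f:=\Ehf{k}{(\adegree_{k})}$, a quasi-polynomial of degree $\adegree_{k}$ and period $k$. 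The algebraic observation that drives the argument is that the factorization $\R_{\Phi}(t)=[\coxc_0]_t[\coxc_1]_t\cdots[\coxc_{\degree}]_t\A_{\degree}(t)$ of Theorem~\ref{Lam-Postnikov} forces $[k]_t^{\adegree_{k}+1}$ to divide $\R_{\Phi}(t)$: by definition exactly $\adegree_{k}+1$ of the integers $\coxc_0,\dots,\coxc_{\degree}$ are multiples of $k$, and $[k]_t\mid[\coxc_j]_t$ whenever $k\mid\coxc_j$ (because $[kd]_t=[k]_t[d]_{t^{k}}$). Writing $\R_{\Phi}(t)=[k]_t^{\adegree_{k}+1}g_k(t)$ with $g_k$ a polynomial, and noting that $\frac{k}{\gcd(\nn+1,k)}$ divides $k$, the hypotheses of Proposition~\ref{averaging} are satisfied with $\coxc=k$, $m=\nn+1$, the polynomial $g_k$, and the quasi-polynomial $f$.

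Then I would apply Proposition~\ref{averaging} and use that $\R_{\Phi}(T)=[k]_T^{\adegree_{k}+1}g_k(T)$ as polynomials in $T$ (so the corresponding operator identities hold after substituting the commuting operators $\Sh^{\nn+1}$ and $\overline{\Sh}^{\nn+1}$), obtaining
\[
\R_{\Phi}(\Sh^{\nn+1})f(t)=[k]_{\Sh^{\nn+1}}^{\adegree_{k}+1}g_k(\Sh^{\nn+1})f(t)=[k]_{\overline{\Sh}^{\nn+1}}^{\adegree_{k}+1}g_k(\overline{\Sh}^{\nn+1})\tilde{f}^{\gcd(\nn+1,k)}(t)=\R_{\Phi}(\overline{\Sh}^{\nn+1})\tilde{f}^{\gcd(\nn+1,k)}(t).
\]
Next, since $k\mid\Ehp$ for every such $k$ (each $\coxc_j$ divides $\Ehp$: by Lemma~\ref{gene_quasi_p} together with Theorem~\ref{gene_Eh}(5) the minimal period $\Ehp$ equals $\mathrm{lcm}(\coxc_0,\dots,\coxc_{\degree})$), we have $\gcd(\gcd(\nn+1,\Ehp),k)=\gcd(\nn+1,k)$, so Proposition~\ref{tilde-gcd} applied with period $k$ lets us rewrite the exponent and get $\R_{\Phi}(\Sh^{\nn+1})f(t)=\R_{\Phi}(\overline{\Sh}^{\nn+1})\tilde{f}^{\gcd(\nn+1,\Ehp)}(t)$, now with the single common exponent $\gcd(\nn+1,\Ehp)$.

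Summing over $k\in\{\difi_0,\dots,\difi_{\ndifi}\}$ and using linearity of $\R_{\Phi}(\Sh^{\nn+1})$, linearity of $\R_{\Phi}(\overline{\Sh}^{\nn+1})$ (Lemma~\ref{S_bar_linear}(1)), and linearity of the averaging operation (Lemma~\ref{tilde_linear}), together with $\Eh_{\Phi}=\sum_{k}\Ehf{k}{(\adegree_{k})}$ (Proposition~\ref{Eh_deco}), I obtain
\[
\R_{\Phi}(\Sh^{\nn+1})\Eh_{\Phi}(t)=\R_{\Phi}(\overline{\Sh}^{\nn+1})\sum_{k\in\{\difi_0,\dots,\difi_{\ndifi}\}}\widetilde{\Ehf{k}{(\adegree_{k})}}^{\,\gcd(\nn+1,\Ehp)}(t)=\R_{\Phi}(\overline{\Sh}^{\nn+1})\avEh{\Phi}{\gcd(\nn+1,\Ehp)}(t),
\]
and Theorem~\ref{characteristic_quasi_poly} identifies the left-hand side with $\chi_{quasi}(\mathcal{A}_{\Phi}^{[1,\nn]},t)$, which is the asserted formula. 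For the last statement, $\avEh{\Phi}{\gcd(\nn+1,\Ehp)}(t)$ has period $\gcd(\nn+1,\Ehp)$ by Proposition~\ref{tilde-gcd}, and $\R_{\Phi}(\overline{\Sh}^{\nn+1})$, being a polynomial in $\overline{\Sh}$ and hence shifting each constituent without permuting them, sends a quasi-polynomial of period $\gcd(\nn+1,\Ehp)$ to one of the same period; hence $\chi_{quasi}(\mathcal{A}_{\Phi}^{[1,\nn]},t)$ has period $\gcd(\nn+1,\Ehp)$.

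The main obstacle I anticipate is arranging the hypotheses of Proposition~\ref{averaging} correctly for each summand: establishing the divisibility $[k]_t^{\adegree_{k}+1}\mid\R_{\Phi}(t)$ so that exactly the right power of the cyclotomic-type factor $[k]_t$ peels off (and the residual factor $g_k$ can be reabsorbed into $\R_{\Phi}$ after replacing $\Sh$ by $\overline{\Sh}$), and carrying the gcd bookkeeping — above all the identity $\gcd(\gcd(\nn+1,\Ehp),k)=\gcd(\nn+1,k)$, which rests on $k\mid\Ehp$ and is what permits all summands to be averaged with one common exponent $\gcd(\nn+1,\Ehp)$, so that the sum closes up into $\avEh{\Phi}{\gcd(\nn+1,\Ehp)}(t)$.
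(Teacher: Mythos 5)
Your proposal is correct and takes essentially the same route as the paper: decompose $\Eh_{\Phi}$ via Proposition \ref{Eh_deco}, peel the factor $[\difi_k]_t^{\adegree_{\difi_k}+1}$ off $\R_{\Phi}$ using Theorem \ref{Lam-Postnikov}, apply Proposition \ref{averaging} to each summand with the gcd bookkeeping $\gcd(\gcd(\nn+1,\Ehp),\difi_k)=\gcd(\nn+1,\difi_k)$ from Proposition \ref{tilde-gcd}, and reassemble by Lemmas \ref{S_bar_linear} and \ref{tilde_linear}. The only (cosmetic) difference is that you invoke Proposition \ref{tilde-gcd} after averaging to unify the exponents, whereas the paper applies it beforehand; the period argument at the end is identical.
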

\begin{proof}
Let $\Phi$ be an irreducible root system of rank $\degree$. We can define the polynomial
\[
g_k(t^{\nn+1}):= \frac{\C{t^{\nn+1}} \A_{\degree}(t^{\nn+1})}{[\difi_k]_{t^{\nn+1}}^{\adegree_{\difi_k}+1}}
\]
for any $k\in\{0,\cdots,\ndifi\}$ because $[\difi_k]_{t^{\nn+1}}^{\adegree_{\difi_k}+1}$ divides $\C{t^{\nn+1}}$. By Proposition \ref{Eh_deco},
\begin{equation}
\Eh_{\Phi}(t)=\sum_{k\in \{\difi_0,\cdots, \difi_{\ndifi}\}}\Ehf{k}{(\adegree_{k})}(t).
\end{equation}	
Note that $\Ehf{\difi_k}{(\adegree_{\difi_k})}(t)$ is a quasi-polynomial of degree $\adegree_{\difi_k}$ with period $\difi_k$. Because $\Ehp$ is a multiple of $\difi_k$, by Proposition \ref{tilde-gcd}, we obtain $\widetilde{\Ehf{\difi_k} {(\adegree_{\difi_k})}}^{\gcd(\nn+1,\Ehp)} (t)=\widetilde{\Ehf{\difi_k} {(\adegree_{\difi_k})}}^{\gcd(\nn+1,\difi_k)}(t)$. By Proposition \ref{averaging}, for any $k\in\{0,\cdots,\ndifi\}$,
\[
[\difi_k]_{\Sh^{\nn+1}}^{\adegree_{\difi_k}+1}g_k(\Sh^{\nn+1}) \Ehf{\difi_k}{(\adegree_{\difi_k})}(t) = [\difi_k]_{\overline{\Sh}^{\nn+1}}^{\adegree_{\difi_k}+1} g_k(\overline{\Sh}^{\nn+1}) \widetilde{\Ehf{\difi_k} {(\adegree_{\difi_k})}}^{\gcd(\nn+1,\Ehp)}(t).
\]
Therefore, by Lemma \ref{tilde_linear} and Theorem \ref{Lam-Postnikov},
\[
\begin{split}
\chi_{quasi}(\mathcal{A}_{\Phi}^{[1,\nn]},t)&=\R_{\Phi}(\Sh^{\nn+1})\Eh_{\Phi}(t)\\
&=\R_{\Phi}(\Sh^{\nn+1})(\sum_{k\in \{\difi_0,\cdots,\difi_{\ndifi}\}}\Ehf{k}{(\adegree_{k})}(t))\\
&=(\C{\Sh^{\nn+1}}) \A_{\degree}(\Sh^{\nn+1}) (\sum_{k\in \{\difi_0,\cdots,\difi_{\ndifi}\}}\Ehf{k}{(\adegree_{k})}(t))\\
&=[\difi_0]_{\Sh^{\nn+1}}^{\adegree_{\difi_0}+1}g_0(\Sh^{\nn+1})\Ehf{\difi_0}{(\adegree_{\difi_0})}(t)+\cdots+
[\difi_{\adegree}]_{\Sh^{\nn+1}}^{\adegree_{\difi_{\ndifi}+1}}g_{\ndifi}(\Sh^{\nn+1})\Ehf{\difi_{\ndifi}}{(\adegree_{\difi_{\ndifi}})}(t)\\
&=[\difi_0]_{\overline{\Sh}^{\nn+1}}^{\adegree_{\difi_0}+1}g_0(\overline{\Sh}^{\nn+1})\widetilde{\Ehf{\difi_0}{(\adegree_{\difi_0})}}^{\gcd(\nn+1,\Ehp)}(t)+\cdots+
[\difi_{\ndifi}]_{\overline{\Sh}^{\nn+1}}^{\adegree_{\difi_{\ndifi}+1}}g_{\ndifi}(\overline{\Sh}^{\nn+1})
\widetilde{\Ehf{\difi_{\ndifi}}{(\adegree_{\difi_{\ndifi}})}}^{\gcd(\nn+1,\Ehp)}(t)\\
&=(\C{\overline{\Sh}^{\nn+1}})\A_{\degree}(\overline{\Sh}^{\nn+1})(\sum_{k\in \{\difi_0,\cdots,\difi_{\ndifi}\}}\widetilde{\Ehf{k}{(\adegree_{k})}}^{\gcd(\nn+1,\Ehp)}(t))\\
&=\R_{\Phi}(\overline{\Sh}^{\nn+1})(\sum_{k\in \{\difi_0,\cdots,\difi_{\ndifi}\}}\widetilde{\Ehf{k}{(\adegree_{k})}}^{\gcd(\nn+1,\Ehp)}(t))\\
&=\R_{\Phi}(\overline{\Sh}^{\nn+1})\avEh{\Phi}{\gcd(\nn+1,\Ehp)}(t).
\end{split}
\]
The characteristic quasi-polynomial $\chi_{quasi}(\mathcal{A}_{\Phi}^{[1,\nn]},t)$ has the period $\gcd(\nn+1,\Ehp)$ because $\avEh{\Phi}{\gcd(\nn+1,\Ehp)}$ has the period $\gcd(\nn+1,\Ehp)$.
\end{proof}
Note that $\avEh{\Phi}{1}(t)$ is a polynomial. The following comes immediately from Theorems \ref{g-Eulerian} and \ref{main theorem}.
\begin{corollary}\label{gene_worpitzky_pol}
\begin{equation}
t^{\ell}=\R_{\Phi}(\Sh)\tilde{\Eh}^1_{\Phi}(t).
\end{equation}
\end{corollary}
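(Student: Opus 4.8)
The plan is to specialize Theorem \ref{main theorem} to the trivial parameter $\nn=0$, where $\mathcal{A}_{\Phi}^{[1,0]}=\emptyset$, and to compare it with the Worpitzky-type identity of Theorem \ref{g-Eulerian}. On one side, Theorem \ref{g-Eulerian} (equivalently, Theorem \ref{characteristic_quasi_poly} at $\nn=0$) gives
\[
\chi_{quasi}(\mathcal{A}_{\Phi}^{[1,0]},t)=\R_{\Phi}(\Sh)\Eh_{\Phi}(t)=t^{\degree}.
\]
On the other side, Theorem \ref{main theorem} at $\nn=0$ gives, using $\gcd(\nn+1,\Ehp)=\gcd(1,\Ehp)=1$,
\[
\chi_{quasi}(\mathcal{A}_{\Phi}^{[1,0]},t)=\R_{\Phi}(\overline{\Sh}^{1})\avEh{\Phi}{\gcd(1,\Ehp)}(t)=\R_{\Phi}(\overline{\Sh})\tilde{\Eh}^{1}_{\Phi}(t).
\]

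The one point that needs a word of justification is that $\R_{\Phi}(\overline{\Sh})$ and $\R_{\Phi}(\Sh)$ act identically here. By Proposition \ref{tilde-gcd} the quasi-polynomial $\tilde{\Eh}^{1}_{\Phi}(t)$ has period $\gcd(1,\Ehp)=1$, i.e.\ it is an ordinary polynomial with a single constituent (this is the remark preceding the statement). For such a polynomial $p(t)$, Definition \ref{shift_bar} gives $\overline{\Sh}p(t)=p(t-1)=\Sh p(t)$, hence $\overline{\Sh}^{k}p(t)=\Sh^{k}p(t)$ for every $k\geqq 0$; extending linearly through the coefficients of $\R_{\Phi}$ yields $\R_{\Phi}(\overline{\Sh})\tilde{\Eh}^{1}_{\Phi}(t)=\R_{\Phi}(\Sh)\tilde{\Eh}^{1}_{\Phi}(t)$. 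Equating the two displayed expressions for $\chi_{quasi}(\mathcal{A}_{\Phi}^{[1,0]},t)$ then gives exactly $t^{\degree}=\R_{\Phi}(\Sh)\tilde{\Eh}^{1}_{\Phi}(t)$, which is the claim.

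There is essentially no obstacle: the corollary is a direct specialization of the two theorems, and the only bookkeeping is the harmless identification of $\Sh$ with $\overline{\Sh}$ once the underlying quasi-polynomial is genuinely a polynomial. As a closing remark one may add that $\tilde{\Eh}^{1}_{\Phi}(t)$ is in fact the unique polynomial that can replace $\Eh_{\Phi}(t)$ in Theorem \ref{g-Eulerian}: since $\R_{\Phi}(1)=\coxc_0\coxc_1\cdots\coxc_{\degree}\A_{\degree}(1)\neq 0$ by Theorem \ref{Lam-Postnikov}, the polynomial $(1-\Sh)$ does not divide $\R_{\Phi}(\Sh)$, so by Proposition \ref{cong} the operator $\R_{\Phi}(\Sh)$ is injective on polynomials.
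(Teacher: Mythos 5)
Your proof is correct and follows essentially the same route as the paper: the corollary is obtained by setting $\nn=0$ in Theorem \ref{main theorem}, identifying $\chi_{quasi}(\mathcal{A}_{\Phi}^{[1,0]},t)=t^{\degree}$ via Theorem \ref{g-Eulerian}, and noting that $\overline{\Sh}$ coincides with $\Sh$ on the polynomial $\tilde{\Eh}^{1}_{\Phi}(t)$ (a point the paper itself records in the remark following the corollary). Your closing observation on uniqueness via Proposition \ref{cong} is a harmless, correct bonus not needed for the statement.
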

\begin{remark}
\item Proposition \ref{root Ehrhart} can also be proved using Corollary \ref{gene_worpitzky_pol}. First, note that if a function $f(t)$ is a polynomial, then $(\Sh f)(t)=(\overline{\Sh}f)(t)$. By Corollary \ref{gene_worpitzky_pol} and Lemma \ref{Lam-Postnikov}, 
\[
\begin{split}
\A_{\degree}(\overline{\Sh})\Eh_{A_{\degree}}(t)&=\R_{\Phi}(\overline{\Sh})\avEh{\Phi}{1}(t)\\
&=\C{\overline{\Sh}}\A_{\degree}(\overline{\Sh})\avEh{\Phi}{1}(t)\\
&=\A_{\degree}(\overline{\Sh})\C{\overline{\Sh}}\avEh{\Phi}{1}(t)\\
&=\A_{\degree}(\overline{\Sh})\C{\Sh}\Eh_{\Phi}(t).
\end{split}
\]
Thus, 
\[
\A_{\degree}(\Sh)(\Eh_{A_{\degree}}(t)-\C{\Sh}\Eh_{\Phi}(t))=0.
\]
If ($\Eh_{A_{\degree}}(t)-\C{\Sh}\Eh_{\Phi}(t))\neq 0$, then Lemma \ref{Shift congruences} implies that $(1-\Sh)$ divides $\A_{\degree}(\Sh)$, but $(1-\Sh)$ does not divide $\A_{\degree}(\Sh)$. Hence, 
\[
\Eh_{A_{\degree}}(t)-\C{\Sh}\Eh_{\Phi}(t)=0.
\]
\end{remark}

\begin{theorem}\label{corollary_1}
Let $\m:=\frac{\nn+1}{\gcd(\nn+1,\Ehp)}$. Then, 
\begin{equation}
\chi_{quasi}(\mathcal{A}_{\Phi}^{[1,\nn]},t)=(\prod_{j=0}^{\degree}\frac{1}{\m}[\m]_{\Sh^{\coxc_j\cdot \gcd(\nn+1,\Ehp)}}) \chi _{quasi}(\mathcal{A}_{\Phi}^{[1,\gcd(\nn+1,\Ehp)-1]},t).
\end{equation}
\end{theorem}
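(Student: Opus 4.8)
The plan is to derive the identity from Theorem \ref{main theorem}, applied to the two parameters $\nn$ and $\gcd(\nn+1,\Ehp)-1$, together with the generalized Eulerian congruence of Proposition \ref{g_Eulerian_cong}. Write $d:=\gcd(\nn+1,\Ehp)$, so that $\nn+1=\m d$ and $d\mid\Ehp$. Theorem \ref{main theorem} with parameter $\nn$ gives $\chi_{quasi}(\mathcal{A}_{\Phi}^{[1,\nn]},t)=\R_{\Phi}(\overline{\Sh}^{\nn+1})\avEh{\Phi}{d}(t)$, and with parameter $d-1$ it gives $\chi_{quasi}(\mathcal{A}_{\Phi}^{[1,d-1]},t)=\R_{\Phi}(\overline{\Sh}^{d})\avEh{\Phi}{d}(t)$; here the exponent of the averaged Ehrhart quasi-polynomial in the second formula is $\gcd(d,\Ehp)=d$ because $d\mid\Ehp$, so the \emph{same} quasi-polynomial $\avEh{\Phi}{d}(t)$ appears in both. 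Hence the theorem reduces to the operator identity $\R_{\Phi}\bigl((\overline{\Sh}^{d})^{\m}\bigr)\avEh{\Phi}{d}(t)=\bigl(\prod_{j=0}^{\degree}\frac{1}{\m}[\m]_{(\overline{\Sh}^{d})^{\coxc_j}}\bigr)\R_{\Phi}(\overline{\Sh}^{d})\avEh{\Phi}{d}(t)$, followed by a translation of $\overline{\Sh}$ into $\Sh$.

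For the operator identity I would substitute $x=\overline{\Sh}^{d}$ into the congruence $\R_{\Phi}(x^{\m})\equiv\bigl(\prod_{j=0}^{\degree}\frac{1}{\m}[\m]_{x^{\coxc_j}}\bigr)\R_{\Phi}(x)\bmod(1-x)^{\degree+1}$ of Proposition \ref{g_Eulerian_cong} (valid since $\m$ is a positive integer; when $\m=1$ both sides are literally equal). The difference of the two sides is $(1-x)^{\degree+1}$ times a polynomial, so after substitution it becomes $(1-\overline{\Sh}^{d})^{\degree+1}$ composed with a polynomial in $\overline{\Sh}^{d}$; as all of these operators commute, it is enough to check $(1-\overline{\Sh}^{d})^{\degree+1}\avEh{\Phi}{d}(t)=0$. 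By Definition \ref{quasi_av} the quasi-polynomial $\avEh{\Phi}{d}(t)$ is an average of the quasi-polynomials $\Eh_{\Phi}^{\sigma^{j}}(t)$, each having the same constituents as $\Eh_{\Phi}$ up to permutation, so $\deg\avEh{\Phi}{d}\le\deg\Eh_{\Phi}=\degree$ by Theorem \ref{gene_Eh}; hence $(1-\overline{\Sh})^{\degree+1}\avEh{\Phi}{d}(t)=0$ by Lemma \ref{S_bar_linear}, and since $1-\overline{\Sh}$ divides $1-\overline{\Sh}^{d}=(1-\overline{\Sh})(1+\overline{\Sh}+\cdots+\overline{\Sh}^{d-1})$, the desired vanishing follows. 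This establishes $\chi_{quasi}(\mathcal{A}_{\Phi}^{[1,\nn]},t)=\bigl(\prod_{j=0}^{\degree}\frac{1}{\m}[\m]_{\overline{\Sh}^{d\coxc_j}}\bigr)\chi_{quasi}(\mathcal{A}_{\Phi}^{[1,d-1]},t)$.

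It remains to replace $\overline{\Sh}$ by $\Sh$ in this last formula. By Theorem \ref{main theorem} the quasi-polynomial $\chi_{quasi}(\mathcal{A}_{\Phi}^{[1,d-1]},t)$ has period $d$, and each exponent $d\coxc_j$ is a multiple of $d$. From the relation $(\Sh f)(t)=(\overline{\Sh}f^{\sigma})(t)$ one obtains, for any quasi-polynomial $f$ whose period divides $d$ and any multiple $e$ of $d$, the equality $\Sh^{e}f=\overline{\Sh}^{e}f$, because $\sigma^{e}$ then acts trivially on the constituents. Since each factor $\frac{1}{\m}[\m]_{\Sh^{d\coxc_j}}$ is a combination of shifts by multiples of $d$ and therefore sends a quasi-polynomial of period dividing $d$ to another one, applying the factors one at a time lets us replace every $\overline{\Sh}$ by $\Sh$, which yields exactly the stated formula. (The case $\m=1$, i.e.\ $\nn+1\mid\Ehp$, is trivial since then every operator occurring is the identity.)

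The only nonroutine point, and the place where the constructions of \S\ref{section:Pre_quasi} are genuinely needed, is the transport of the congruence of Proposition \ref{g_Eulerian_cong} through the substitution $x=\overline{\Sh}^{d}$: this is legitimate precisely because the error term $(1-\overline{\Sh}^{d})^{\degree+1}(\cdots)$ annihilates $\avEh{\Phi}{d}(t)$, which is why one must argue with the degree-controlled averaged quasi-polynomial $\avEh{\Phi}{d}(t)$ rather than with $\Eh_{\Phi}(t)$ itself (whose low-degree part has period as large as $\Ehp$), and likewise the final $\Sh$-versus-$\overline{\Sh}$ substitution only works because every exponent in play is a multiple of the common period $d$.
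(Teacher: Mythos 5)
Your proposal is correct and follows essentially the same route as the paper's proof: apply Theorem \ref{main theorem} (to both parameters $\nn$ and $\gcd(\nn+1,\Ehp)-1$, which yield the same averaged quasi-polynomial $\avEh{\Phi}{\gcd(\nn+1,\Ehp)}(t)$), transport the congruence of Proposition \ref{g_Eulerian_cong} through the substitution $x=\overline{\Sh}^{\gcd(\nn+1,\Ehp)}$ using the annihilation property of Lemma \ref{S_bar_linear}, and finally trade $\overline{\Sh}$ for $\Sh$ because all shifts are by multiples of the period. You merely make explicit the details (degree bound on $\avEh{\Phi}{d}$, divisibility of $1-\overline{\Sh}^{d}$ by $1-\overline{\Sh}$, the $\Sh$ versus $\overline{\Sh}$ exchange) that the paper leaves implicit.
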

\begin{proof}
By Theorem \ref{main theorem}, Lemma \ref{S_bar_linear}, and Proposition \ref{g_Eulerian_cong},
\[
\begin{split}
\chi _{quasi}(\mathcal{A}_{\Phi}^{[1,\nn]},t)
&=\R_{\Phi}(\overline{\Sh}^{\nn+1})\avEh{\Phi}{\gcd(\nn+1,\Ehp)}(t)\\
&= \frac{1}{\m^{\degree+1}} ([\m]_{\overline{\Sh}^{\coxc_0 \gcd(\nn+1,\Ehp)}}\cdots
[\m]_{\overline{\Sh}^{\coxc_{\degree} \gcd(\nn+1,\Ehp)}})\R_{\Phi}(\overline{\Sh}^{\gcd(\nn+1,\Ehp)})\avEh{\Phi}{\gcd(\nn+1,\Ehp)}(t)\\
&=\frac{1}{\m^{\degree+1}} ([\m]_{\Sh^{\coxc_0 \gcd(\nn+1,\Ehp)}}\cdots
[\m]_{\Sh^{\coxc_{\degree} \gcd(\nn+1,\Ehp)}}) \chi _{quasi}(\mathcal{A}_{\Phi}^{[1, \gcd(\nn+1,\Ehp)-1]},t).\\
\end{split}
\]
\end{proof}
We prove Conjecture \ref{Postnikov-Stanley_2} using the following lemma, as used in \cite{Athanasiadis}, \cite{Postnikov-Stanley}, and \cite{Yoshinaga_1}.
\begin{lemma}\label{Postnikov-Stanley's lemma}(Postnikov--Stanley \cite{Postnikov-Stanley}, Lemma 9.13)
Let $f(t) \in \mathbb{C}[t]$. Suppose that all the roots of the equation $f(t)=0$ have real parts that are equal to $a$. Let $g(\Sh) \in \mathbb{C}[\Sh]$ be a polynomial such that every root of the equation $g(z)=0$ satisfies $|z|=1$. Then, all roots of the equation $g(\Sh)f(t)=0$ have real parts that are equal to $a+\frac{\mathrm{deg} g}{2}$.
\end{lemma}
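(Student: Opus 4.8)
The plan is to factor $g$ into linear pieces and reduce to a single shift. Since $g(z)$ is a nonzero polynomial all of whose roots lie on the unit circle, write $g(z)=c\prod_{j=1}^{\deg g}(z-\zeta_j)$ with $c\neq 0$ and $|\zeta_j|=1$. Polynomial substitution $z\mapsto\Sh$ is a ring homomorphism and the operators $\Sh-\zeta_j$ commute, so $g(\Sh)f(t)=c\,(\Sh-\zeta_1)\cdots(\Sh-\zeta_{\deg g})f(t)$, and the nonzero constant $c$ is irrelevant for the roots. Thus it suffices to prove the following one-step claim and then iterate it $\deg g$ times: if $f(t)\in\mathbb{C}[t]$ has all of its roots on the vertical line $\Re t=a$ and $|\zeta|=1$, then every root of $(\Sh-\zeta)f(t)=f(t-1)-\zeta f(t)$ lies on the line $\Re t=a+\tfrac12$. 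Applying the claim successively to $\zeta_{\deg g},\zeta_{\deg g-1},\dots,\zeta_1$ raises the common real part of the roots from $a$ to $a+\tfrac{\deg g}{2}$.

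For the one-step claim I would factor $f(t)=\lambda\prod_j(t-r_j)$ with $\Re r_j=a$ for every $j$. The elementary identity $|s-1|^2-|s|^2=1-2\Re s$, applied with $s=t-r_j$, shows that $|t-1-r_j|-|t-r_j|$ has the same sign as $(a+\tfrac12)-\Re t$, and, crucially, this sign does not depend on $j$ because all the $r_j$ share the real part $a$. Hence, for any $t$ with $f(t)\neq 0$,
\[
\left|\frac{f(t-1)}{f(t)}\right|=\prod_j\frac{|t-1-r_j|}{|t-r_j|}
\]
is $<1$, $=1$, or $>1$ precisely according as $\Re t>a+\tfrac12$, $\Re t=a+\tfrac12$, or $\Re t<a+\tfrac12$.

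Now let $t_0$ be any root of $f(t-1)-\zeta f(t)$. If $f(t_0)\neq 0$, then $f(t_0-1)/f(t_0)=\zeta$ has modulus $1$, so the trichotomy above forces $\Re t_0=a+\tfrac12$. If instead $f(t_0)=0$, then $f(t_0-1)=0$ as well, so $t_0$ and $t_0-1$ are both roots of $f$; this would give $\Re t_0=a$ and $\Re(t_0-1)=a$ simultaneously, which is impossible. Hence every root of $(\Sh-\zeta)f$ has real part $a+\tfrac12$, and the induction over $\zeta_1,\dots,\zeta_{\deg g}$ completes the argument.

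The only point requiring care — and thus the main, though mild, obstacle — is the bookkeeping of degenerate low-degree situations: $(\Sh-1)$ sends a constant to $0$, so after applying some of the factors the polynomial $g(\Sh)f$ may collapse to a nonzero constant or to the zero polynomial. In the first case the assertion on roots is vacuous, and in the second there is no root set to speak of; I would dispose of this in a sentence before running the induction, and in the genuine case $g(\Sh)f\not\equiv 0$ the argument above pins down every root. Apart from that, the proof is short, the whole content being the sign computation $|s-1|^2-|s|^2=1-2\Re s$ and the observation that it has a common sign across all roots of $f$.
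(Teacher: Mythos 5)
Your proof is correct: the factorization of $g(\Sh)$ into commuting linear factors, the identity $|s-1|^2-|s|^2=1-2\Re s$ giving a uniform trichotomy across all roots of $f$, the exclusion of the case $f(t_0)=0$, and the remark on degenerate collapse to a constant or to zero together give a complete argument. Note that the paper itself offers no proof of this lemma --- it is quoted as Lemma 9.13 of Postnikov--Stanley --- and your argument is essentially the standard one from that reference (peel off one unimodular factor at a time, each raising the common real part by $\tfrac12$), so there is nothing to flag beyond the edge case $g(\Sh)f\equiv 0$, which you already dispose of and which is likewise implicit in the original statement.
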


\begin{theorem}\label{gcd_prime}
Let $\nn$ be an integer with $\gcd(\nn+1,\Ehp)=1$. Then,
\begin{equation}
\chi_{quasi}(\mathcal{A}_{\Phi}^{[1,\nn]},t)=(\prod_{j=0}^{\degree}\frac{1}{\nn+1}[\nn+1]_{\Sh^{\coxc_j}})t^{\degree}.
\end{equation}
In particular, the characteristic quasi-polynomial becomes a polynomial and any root $z$ of the equation  $\chi(\mathcal{A}_{\Phi}^{[1,\nn]},t)=0$ satisfies $\Re z=\frac{\nn \coxn}{2}$.
\end{theorem}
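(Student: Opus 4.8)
The plan is to read the explicit formula off Theorem~\ref{corollary_1} and then feed it into the Postnikov--Stanley lemma. Under the hypothesis $\gcd(\nn+1,\Ehp)=1$ we have $\m=\frac{\nn+1}{\gcd(\nn+1,\Ehp)}=\nn+1$, and the ``residual'' Linial arrangement on the right-hand side of Theorem~\ref{corollary_1} is $\mathcal{A}_{\Phi}^{[1,\gcd(\nn+1,\Ehp)-1]}=\mathcal{A}_{\Phi}^{[1,0]}=\emptyset$. By Theorem~\ref{characteristic_quasi_poly} with $\nn=0$, equivalently by Theorem~\ref{g-Eulerian}, $\chi_{quasi}(\mathcal{A}_{\Phi}^{[1,0]},t)=\R_{\Phi}(\Sh)\Eh_{\Phi}(t)=t^{\degree}$. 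Substituting this into Theorem~\ref{corollary_1} and using $\gcd(\nn+1,\Ehp)=1$ to collapse each shift exponent $\coxc_j\cdot\gcd(\nn+1,\Ehp)$ down to $\coxc_j$ gives exactly $\chi_{quasi}(\mathcal{A}_{\Phi}^{[1,\nn]},t)=\bigl(\prod_{j=0}^{\degree}\frac{1}{\nn+1}[\nn+1]_{\Sh^{\coxc_j}}\bigr)t^{\degree}$, the first assertion.

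Next I would observe that this right-hand side is visibly a polynomial in $t$, being the image of the polynomial $t^{\degree}$ under the operator $g(\Sh):=\prod_{j=0}^{\degree}\frac{1}{\nn+1}[\nn+1]_{\Sh^{\coxc_j}}\in\mathbb{C}[\Sh]$; since $g(1)=1\neq 0$, Proposition~\ref{cong} confirms it is nonzero of degree exactly $\degree$. Thus $\chi_{quasi}(\mathcal{A}_{\Phi}^{[1,\nn]},t)$ is a quasi-polynomial of minimal period $1$, consistent with Theorem~\ref{main theorem}, so every integer is coprime to its period and therefore $\chi_{quasi}(\mathcal{A}_{\Phi}^{[1,\nn]},t)=\chi(\mathcal{A}_{\Phi}^{[1,\nn]},t)$ as polynomials.

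Finally I would apply Lemma~\ref{Postnikov-Stanley's lemma} with $f(t)=t^{\degree}$ and $g(\Sh)$ as above. The only root of $f$ is $0$, so $a=0$. Each factor of $g$ satisfies $[\nn+1]_{z^{\coxc_j}}=\frac{1-z^{\coxc_j(\nn+1)}}{1-z^{\coxc_j}}$, whose roots are those $\coxc_j(\nn+1)$-th roots of unity that are not $\coxc_j$-th roots of unity; hence every root of $g$ has absolute value $1$. Moreover $\deg g=\nn\sum_{j=0}^{\degree}\coxc_j=\nn\coxn$ by Proposition~\ref{coxc_coxn_relation}. Lemma~\ref{Postnikov-Stanley's lemma} then gives that every root $z$ of $g(\Sh)f(t)=\chi(\mathcal{A}_{\Phi}^{[1,\nn]},t)=0$ satisfies $\Re z=a+\frac{\deg g}{2}=\frac{\nn\coxn}{2}$, which is the conjecture in this case.

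I do not anticipate any serious obstacle: the argument is essentially bookkeeping layered on top of Theorem~\ref{corollary_1} and Lemma~\ref{Postnikov-Stanley's lemma}. The points that need a little care are the specialization $\chi_{quasi}(\mathcal{A}_{\Phi}^{[1,0]},t)=t^{\degree}$ (Theorem~\ref{g-Eulerian}), the degree count $\deg g=\nn\coxn$ (from $\coxc_0+\cdots+\coxc_{\degree}=\coxn$), and the routine remark that a polynomial-valued characteristic quasi-polynomial coincides with the characteristic polynomial on all integers.
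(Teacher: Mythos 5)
Your proposal is correct and follows essentially the same route as the paper: specialize Theorem~\ref{corollary_1} at $\gcd(\nn+1,\Ehp)=1$, identify $\chi_{quasi}(\mathcal{A}_{\Phi}^{[1,0]},t)=\R_{\Phi}(\Sh)\Eh_{\Phi}(t)=t^{\degree}$ via Theorem~\ref{g-Eulerian}, and apply Lemma~\ref{Postnikov-Stanley's lemma} together with $\coxc_0+\cdots+\coxc_{\degree}=\coxn$. Your extra bookkeeping (the unit-circle location of the roots of $[\nn+1]_{\Sh^{\coxc_j}}$, the degree count $\nn\coxn$, and the remark that a period-one characteristic quasi-polynomial coincides with the characteristic polynomial) just makes explicit what the paper leaves implicit.
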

\begin{proof}
We calculate the characteristic polynomial using Theorems \ref{corollary_1} and \ref{g-Eulerian}.
\[
\begin{split}
\chi_{quasi}(\mathcal{A}_{\Phi}^{[1,\nn]},t)&=(\prod_{j=0}^{\degree}\frac{1}{\nn+1}[\nn+1]_{\Sh^{\coxc_j}}) \chi _{quasi}(\mathcal{A}_{\Phi}^{[1,0]},t)\\
&=(\frac{1}{\nn+1})^{\degree+1} ([\nn+1]_{\Sh^{\coxc_0}}[\nn+1]_{\Sh^{\coxc_1}} \cdots
[\nn+1]_{\Sh^{\coxc_{\degree}}})\R_{\Phi}(\Sh)\Eh_{\Phi}(t)\\
&=(\frac{1}{\nn+1})^{\degree+1} [\nn+1]_{\Sh^{\coxc_0}}[\nn+1]_{\Sh^{\coxc_1}} \cdots
[\nn+1]_{\Sh^{\coxc_{\degree}}}t^{\degree}.
\end{split}
\]
By Lemma \ref{Postnikov-Stanley's lemma}, the real part of any root of the equation $\chi(\mathcal{A}_{\Phi}^{[1,n]},t)=0$ is $\frac{n(c_0+c_1+\cdots+c_{\ell})}{2}=\frac{nh}{2}$.
\end{proof}
	
\begin{remark}
Theorem \ref{gcd_prime} is a generalization of the expression of the characteristic polynomial of $\mathcal{A}_{A_{\degree}}^{[1,\nn]}$ given by Postnikov and Stanley  \cite{Postnikov-Stanley} and the expression of $\mathcal{A}_{B_{\degree}}^{[1,\nn]}$, $\mathcal{A}_{C_{\degree}}^{[1,\nn]}$, and $\mathcal{A}_{D_{\degree}}^{[1,\nn]}$ for even values of $\nn$ given by Athanasiadis \cite{Athanasiadis}.
\end{remark}
\begin{example}[case $E_6$]
Let $\nn$ be a positive integer. Let $\m:=\frac{\nn+1}{\gcd(\nn+1,\Ehp)}$.
\[
\begin{split}
\text{If }\gcd(\nn+1,6)=1,\text{ then} \\
\chi_{quasi}(\mathcal{A}_{E_6}^{[1,\nn]},t)&=(\frac{1}{\m}[\m]_{\Sh})^{3} (\frac{1}{\m}[\m]_{\Sh^2})^{3} (\frac{1}{\m}[\m]_{\Sh^3})t^{6}.\\
\text{If }\gcd(\nn+1,6)=2,\text{ then} \\
\chi_{quasi}(\mathcal{A}_{E_6}^{[1,\nn]},t)&=(\frac{1}{\m}[\m]_{\Sh^{2}})^{3} (\frac{1}{\m}[\m]_{\Sh^4})^{3} (\frac{1}{\m}[\m]_{\Sh^6})\chi_{quasi}(\mathcal{A}_{E_6}^{[1,1]},t).\\
\text{If }\gcd(\nn+1,6)=3,\text{ then} \\
\chi_{quasi}(\mathcal{A}_{E_6}^{[1,\nn]},t)&=(\frac{1}{\m}[\m]_{\Sh^3})^{3} (\frac{1}{\m}[\m]_{\Sh^6})^{3} (\frac{1}{\m}[\m]_{\Sh^9})\chi_{quasi}(\mathcal{A}_{E_6}^{[1,2]},t).\\
\text{If }\gcd(\nn+1,6)=6,\text{ then} \\
\chi_{quasi}(\mathcal{A}_{E_6}^{[1,\nn]},t)&=(\frac{1}{\m}[\m]_{\Sh^6})^{3} (\frac{1}{\m}[\m]_{\Sh^{12}})^{3} (\frac{1}{\m}[\m]_{\Sh^{18}})\chi_{quasi}(\mathcal{A}_{E_6}^{[1,5]},t).
\end{split}
\]
\end{example}

\begin{theorem}\label{Ch_rad}
Let $\eta:=\frac{\gcd(\nn+1,\Ehp)}{\gcd(\nn+1,\rad(\Ehp))}$.
\begin{equation}
\chi(\mathcal{A}_{\Phi}^{[1,\gcd(\nn+1,\Ehp)-1]},t)=(\prod_{j=0}^{\degree}\frac{1}{\eta}[\eta]_{\Sh^{\coxc_j\cdot \gcd(\nn+1,\rad(\Ehp))}}) \chi(\mathcal{A}_{\Phi}^{[1,\gcd(\nn+1,\rad(\Ehp))-1]},t).
\end{equation}
\end{theorem}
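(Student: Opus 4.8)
Write $d:=\gcd(\nn+1,\Ehp)$ and $r:=\gcd(\nn+1,\rad(\Ehp))$, so that $\eta=d/r$. Since $\rad(\Ehp)\mid\Ehp$ we have $r\mid d\mid\Ehp$, hence $\eta\in\mathbb{Z}_{>0}$, $\gcd(d,\Ehp)=d$, and $\gcd(r,\Ehp)=r$; these elementary divisibility facts I would record at the outset. The plan is to apply Theorem \ref{main theorem} at the two parameters $d-1$ and $r-1$, pass to the constituent on which the characteristic quasi-polynomial agrees with the ordinary characteristic polynomial, and then compare the two resulting expressions using Proposition \ref{constituent_inv} and Proposition \ref{g_Eulerian_cong}. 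Theorem \ref{main theorem} gives
\[
\chi_{quasi}(\mathcal{A}_{\Phi}^{[1,d-1]},t)=\R_{\Phi}(\overline{\Sh}^{d})\avEh{\Phi}{d}(t),\qquad \chi_{quasi}(\mathcal{A}_{\Phi}^{[1,r-1]},t)=\R_{\Phi}(\overline{\Sh}^{r})\avEh{\Phi}{r}(t),
\]
where the superscripts of $\tilde{\Eh}$ have been simplified by $\gcd(d,\Ehp)=d$ and $\gcd(r,\Ehp)=r$.

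Next I would restrict to the constituent for $t\equiv1\bmod\Ehp$. On that constituent the characteristic quasi-polynomial coincides, as a polynomial, with $\chi(\mathcal{A}_{\Phi}^{[1,\cdot]},t)$, and since the minimal periods $d$ and $r$ both divide $\Ehp$ this is the same as taking the first (the $j=1$) constituent of each quasi-polynomial above. Because $\overline{\Sh}$ by definition acts separately within each constituent, the operator $\R_{\Phi}(\overline{\Sh}^{d})$ restricts to the ordinary shift operator $\R_{\Phi}(\Sh^{d})$ acting on the single polynomial $Q_d(t)$ that is the $j=1$ constituent of $\avEh{\Phi}{d}(t)$, and likewise $\R_{\Phi}(\overline{\Sh}^{r})$ restricts to $\R_{\Phi}(\Sh^{r})$ on the $j=1$ constituent $Q_r(t)$ of $\avEh{\Phi}{r}(t)$. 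This yields the polynomial identities
\[
\chi(\mathcal{A}_{\Phi}^{[1,d-1]},t)=\R_{\Phi}(\Sh^{d})\,Q_d(t),\qquad \chi(\mathcal{A}_{\Phi}^{[1,r-1]},t)=\R_{\Phi}(\Sh^{r})\,Q_r(t).
\]
I would then invoke Proposition \ref{constituent_inv} with $f=\Eh_{\Phi}$ (which has the gcd-property by Theorem \ref{gene_Eh}), $\p=\Ehp$, $k=\nn+1$, and $j=1$ (coprime to $\Ehp$): it gives precisely $Q_d=Q_r$. Denote this common polynomial by $Q(t)$; being an average of constituents of $\Eh_{\Phi}$, it has degree at most $\degree$.

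Finally it remains to relate $\R_{\Phi}(\Sh^{d})$ to $\R_{\Phi}(\Sh^{r})$ acting on $Q$. Substituting $t\mapsto\Sh^{r}$ and $n\mapsto\eta$ in the polynomial congruence of Proposition \ref{g_Eulerian_cong} (so that $t^{n}\mapsto\Sh^{r\eta}=\Sh^{d}$) gives
\[
\R_{\Phi}(\Sh^{d})\equiv\Bigl(\prod_{j=0}^{\degree}\tfrac{1}{\eta}[\eta]_{\Sh^{\coxc_j r}}\Bigr)\R_{\Phi}(\Sh^{r})\ \bmod\ (1-\Sh^{r})^{\degree+1}.
\]
Since $(1-\Sh^{r})$ lowers the degree of a polynomial by at least one, $(1-\Sh^{r})^{\degree+1}$ annihilates $Q$ (cf.\ Proposition \ref{cong} and the remark following it); writing the difference of the two operators as $(1-\Sh^{r})^{\degree+1}h(\Sh^{r})$ and applying it to $Q$ therefore gives $0$, so the congruence becomes an equality once both sides act on $Q$. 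Combining this with the two displayed polynomial identities and using $\Sh^{\coxc_j r}=\Sh^{\coxc_j\gcd(\nn+1,\rad(\Ehp))}$ yields the asserted formula. The only delicate points are the bookkeeping of which operator ($\Sh$ or $\overline{\Sh}$) acts on the quasi-polynomial versus on its first constituent, and checking $r\mid d\mid\Ehp$ so that $\eta$ is a positive integer and the superscripts collapse as claimed; the genuinely substantive step — that the first constituent is unchanged when $\Ehp$ is replaced by $\rad(\Ehp)$ — is already delivered by Proposition \ref{constituent_inv}, so no new difficulty arises.
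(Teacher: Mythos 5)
Your proposal is correct and takes essentially the same route as the paper's proof: apply Theorem \ref{main theorem} at the two parameters, restrict to $t\equiv 1 \bmod \Ehp$ where the characteristic quasi-polynomials agree with the characteristic polynomials, identify the two averaged Ehrhart constituents via Proposition \ref{constituent_inv} together with the gcd-property from Theorem \ref{gene_Eh}, and rewrite the Eulerian operator via Proposition \ref{g_Eulerian_cong}. The only cosmetic difference is that you convert $\overline{\Sh}$ to the ordinary shift $\Sh$ on the constituent polynomial before invoking the congruence, while the paper applies the congruence directly at the level of $\overline{\Sh}$ (using Lemma \ref{S_bar_linear}) and passes to $\Sh$ only at the end.
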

\begin{proof}
We set $t\equiv 1 \bmod \Ehp$. Then, we have that $\avEh{\Phi}{\gcd(\nn+1,\Ehp)}(t)= \avEh{\Phi}{\gcd(\nn+1,\rad(\Ehp))}(t)$ by Proposition \ref{constituent_inv} and Proposition \ref{gene_Eh}. Hence, by Lemma \ref{S_bar_linear}, Proposition \ref{g_Eulerian_cong}, and Theorem \ref{main theorem},
\[
\begin{split}
\chi(\mathcal{A}_{\Phi}^{[1,\gcd(\nn+1,\Ehp)-1]},t)&=\R_{\Phi}(\overline
{\Sh}^{\gcd(\nn+1,\Ehp)})\avEh{\Phi}{\gcd(\nn+1,\Ehp)}(t)\\
&=\R_{\Phi}(\overline
{\Sh}^{\eta \gcd(\nn+1,\rad(\Ehp))})\avEh{\Phi}{\gcd(\nn+1,\rad(\Ehp))}(t)\\
&=(\prod_{j=0}^{\degree}\frac{1}{\eta}[\eta]_{\overline{\Sh}^{\coxc_j\cdot \gcd(\nn+1,\rad(\Ehp))}})\R_{\Phi}(\overline{\Sh}^{\gcd(\nn+1,\rad(\Ehp))})\avEh{\Phi}{\gcd(\nn+1,\rad(\Ehp))}(t)\\
&=(\prod_{j=0}^{\degree}\frac{1}{\eta}[\eta]_{\Sh^{\coxc_j\cdot \gcd(\nn+1,\rad(\Ehp))}}) \chi(\mathcal{A}_{\Phi}^{[1,\gcd(\nn+1,\rad(\Ehp))-1]},t).
\end{split}
\]	
\end{proof}

We now provide examples of Theorem \ref{Ch_rad} for $E_8$. Using the notation of Theorem \ref{Ch_rad}, in the case of $E_8$, $\eta$ can only take a value of $1$ or $2$. If $\eta=1$, then Theorem \ref{Ch_rad} is trivial. The following formulas are examples of Theorem \ref{Ch_rad} for $\eta=2$.
\begin{example}[Case $E_8$] 
\begin{equation}
\chi(\mathcal{A}_{E_8}^{[1,4-1]},t)=(\prod_{i=0}^{\degree}\frac{1}{2}[2]_{\Sh^{2c_{i}}})\chi(\mathcal{A}_{E_8}^{[1,2-1]},t).
\end{equation}
\begin{equation}
\chi(\mathcal{A}_{E_8}^{[1,12-1]},t)=(\prod_{i=0}^{\degree}\frac{1}{2}[2]_{\Sh^{6c_{i}}})\chi(\mathcal{A}_{E_8}^{[1,6-1]},t).
\end{equation}
\begin{equation}
\chi(\mathcal{A}_{E_8}^{[1,20-1]},t)=(\prod_{i=0}^{\degree}\frac{1}{2}[2]_{\Sh^{10c_{i}}})\chi(\mathcal{A}_{E_8}^{[1,10-1]},t).
\end{equation}
\begin{equation}
\chi(\mathcal{A}_{E_8}^{[1,60-1]},t)=(\prod_{i=0}^{\degree}\frac{1}{2}[2]_{\Sh^{30c_{i}}})\chi(\mathcal{A}_{E_8}^{[1,30-1]},t).
\end{equation}
\end{example}

\subsection{Verification of the Postnikov--Stanley Linial arrangement conjecture}\label{section:main_check}
We verify Conjecture \ref{Postnikov-Stanley_2} for $\Phi=E_6,E_7,E_8$, or $F_4$. We use the notation of Theorems \ref{corollary_1} and \ref{Ch_rad}. Recall that, according to these theorems, the following formula holds.
\begin{equation}
\chi(\mathcal{A}_{\Phi}^{[1,\nn]},t)=(\prod_{j=0}^{\degree}\frac{1}{\m}[\m]_{\Sh^{\coxc_j\cdot \gcd(\nn+1,\Ehp)}}) (\prod_{j=0}^{\degree}\frac{1}{\eta}[\eta]_{\Sh^{\coxc_j\cdot \gcd(\nn+1,\rad(\Ehp))}}) \chi(\mathcal{A}_{\Phi}^{[1,\gcd(\nn+1,\rad(\Ehp))-1]},t).
\end{equation}

If the real part of any root of the equation 
\[
\chi(\mathcal{A}_{\Phi}^{[1,\gcd(\nn+1,\rad(\Ehp))-1]},t)=0
\]
is $\frac{(\gcd(\nn+1,\rad(\Ehp))-1)\coxn}{2}$ for $\Phi \in \{E_6,E_7,E_8,F_4\}$, then Lemma \ref{Postnikov-Stanley's lemma} implies that Conjecture \ref{Postnikov-Stanley_2} is true. We have computed the characteristic polynomial such that the parameter $\nn+1$ is a factor of $\rad(\Ehp)$ other than $1$ and have determined the real part of the roots using a computational method. We use Theorem \ref{main theorem} and the calculation results of the Ehrhart quasi-polynomial given by Suter \cite{Suter} to compute the characteristic polynomial. The case of $\gcd(\nn+1,\rad(\Ehp))=\rad(\Ehp)$ has already been verified in \cite{Yoshinaga_1}. We present the characteristic polynomials for $\Phi \in \{E_6,E_7,E_8,F_4\} $ in the following tables.

\begin{landscape}
\begin{table}[htbp]
\centering
\caption{Characteristic polynomials for $E_6$ ($\rad(\Ehp)=6$).}
{\normalsize 
\begin{tabular}{|l|c|c|}
\hline
$\chi(\mathcal{A}_{E_6}^{[1,n]},t)$
&$n$&real part\\
\hline\hline
$t^6-36t^5+630t^4-6480t^3+40185t^2-140076t+211992$&$2-1$&$6$\\
\hline
$t^6-72t^5+2400t^4-46080t^3+528600t^2-3396672t+9474200$&$3-1$&$12$\\
\hline
$t^6-180t^5+14550t^4-666000t^3+18019065t^2-271143900t+1762474040$&$6-1$&$30$\\
\hline
\end{tabular}
}
\label{fig:E_6}
\end{table}

\begin{table}[htbp]
\centering
\caption{Characteristic polynomials for $F_4$ ($\rad(\Ehp)=6$).}
{\normalsize 
\begin{tabular}{|l|c|c|}
\hline
$\chi(\mathcal{A}_{F_4}^{[1,\nn]},t)$
&$\nn$&real part\\
\hline\hline
$t^4-24t^3+258t^2-1368t+2917$&$2-1$&$6$\\
\hline
$t^4-48t^3+1000t^2-10176t+41572$&$3-1$&$12$\\
\hline
$t^4-120t^3+5986t^2-143160t+1361989$&$6-1$&$30$\\
\hline
\end{tabular}
}
\label{fig:F_4}
\end{table}

\begin{table}[htbp]
\centering
\caption{Characteristic polynomials for $E_7$ ($\rad(\Ehp)=6$).}
{\normalsize 
\begin{tabular}{|l|c|c|}
\hline
$\chi(\mathcal{A}_{E_7}^{[1,\nn]},t)$
&$\nn$&real part\\
\hline\hline
$t^7-63t^6+1953t^5-36855t^4+446355t^3-3417309t^2+15154251t-29798253$&$2-1$&$9$\\
\hline
$t^7-126t^6+7476t^5-264600t^4+5948040t^3-84088368t^2+687202712t-2490427440$&$3-1$&$18$\\
\hline
$t^7-315t^6+45465t^5-3850875t^4+204937635t^3 -6808068225t^2+130052291075t-1097517119625$&$6-1$&$45$\\
\hline
\end{tabular}
}
\label{fig:E_7}
\end{table}

\begin{table}[htbp]
\centering
\caption{Characteristic polynomials for $E_8$ ($\rad(\Ehp)=30$).}
{\scriptsize 
\begin{tabular}{|l|c|c|}
\hline
$\chi(\mathcal{A}_{E_8}^{[1,\nn]},t)$
&$\nn$&real part\\
\hline\hline
$t^8-120t^7+7140t^6-264600t^5+6540030t^4-108901800t^3+1181603220t^2-7583286600t+21918282249$&$2-1$&$15$\\
\hline
$t^8-240t^7+27440t^6-1915200t^5+88161360t^4-2716963200t^3+54385106720t^2-643164643200t+3426392186728$&$3-1$&$30$\\
\hline
$t^8-480t^7+107520t^6-14515200t^5+1281219408t^4-75249457920t^3+2857900896480t^2-63918602553600t+642465923287416$&$5-1$&$60$\\
\hline
$t^8-600t^7+167300t^6-28035000t^5+3065453790t^4-222698637000t^3+10449830016500t^2-288505461225000t+3577184806486057$&$6-1$&$75$\\
\hline
$t^8-1080t^7+538020t^6-160234200t^5+31018986558t^4-3977954041320t^3+328758988903380t^2-15957853314798600t+347373804233610441$&$10-1$&$135$\\
\hline
$t^8-1680t^7+1297520t^6-597643200t^5+178602069408t^4-35307879102720t^3+4493170619530880t^2-335521093135065600t+11227745283721390816$&$15-1$&$210$\\
\hline
$t^8-3480t^7+5550020t^6-5266510200t^5+3236633286558t^4-1314003597910920t^3+343011765319289780t^2-52494228716611434600t+3597446896074261934441$&$30-1$&$435$\\
\hline
\end{tabular}
}
\label{fig:E_8}
\end{table}
\end{landscape}

\medskip

\noindent
{\bf Acknowledgements.}
I am very grateful to Masahiko Yoshinaga for his various comments on how to improve this paper, for many discussions on the content of this paper, and for his suggestions for addressing the Postnikov--Stanley Linial arrangement conjecture. I thank Stuart Jenkinson, PhD, from Edanz Group (https://en-author-services.edanzgroup.com/ac) for editing a draft of this manuscript. The author also thanks the Department of Mathematics, Hokkaido University 
and JSPS KAKENHI JP18H01115 (PI: M. Yoshinaga) for financial supports.

\end{document}